\newtheorem{theorem}{Theorem}[section]
\newtheorem{lemma}[theorem]{Lemma}
\newtheorem{proposition}[theorem]{Proposition}
\newtheorem{corollary}[theorem]{Corollary}
\newtheorem{prop}{Proposition}[section]
\newtheorem{remark}[prop]{Remark}
\newtheorem{definition}[prop]{Definition}
\makeatletter \@addtoreset{equation}{section} \makeatother
\newcommand{\A}{\textbf{A}}
\newcommand{\ddiv}{\text{div}}
\newcommand{\circA}{\accentset{\circ}{A}}
\def\th{\theta}
\def\n{\nabla}
\def\p{\partial}
\def\<{\langle}
\def\>{\rangle}
\begin{document}

\title[]{On the Morse Index with Constraints II: Applications}

\author{Hung Tran}
\address{Department of Mathematics and Statistics,
	Texas Tech University,
	Lubbock, TX 79409}

\author{Detang Zhou}
\address{Instituto de Matem\'atica e Estat\' \i stica, Universidade Federal Fluminense, Rua Professor Marcos Waldemar de Freitas Reis, Bloco H - Campus do Gragoat\'a, S\~ao Domingos, 24.210-201, Niter\'oi, RJ - BRAZIL}
\thanks{The first author was partially supported by a Simons Foundation Collaboration Grant}
\thanks{The second author was partially  supported by Faperj and CNPq of Brazil.}
\date{}

\maketitle
\begin{abstract} This is the second paper in our sequence. Here, we apply our abstract Morse index formulation developed in the previous paper \cite{TZindexconstraintsI20} to study several optimization set-ups with constraints, including type I or/and type II considerations. A common theme is that critical points belong to the family of capillary surfaces, defined by constant mean curvature and intersecting the ambient manifold at a fixed angle. In each case, we classify how the general index is related to the index with a constraint. For capillary surfaces in a Euclidean ball, we obtain an index estimate which recovers stability results of G. Wang and C. Xia \cite{WX19}and J. Gou and C. Xia \cite{GX20} as special cases. By considering a family of examples, we show that inequality is also sharp. Furthermore, we precisely determine indices with constraints for important examples such as the critical catenoid, round cylinders in a ball, and CMC surfaces with constant curvature in a sphere. 
	
\end{abstract}

\section{Introduction}
Let $(\Omega,g)$ be a $m$-dimensional Riemannian manifold and $\Sigma$ be an $n$-dimensional differentiable manifold possibly with boundary $\partial \Sigma$. On the set $ \mathcal{I}(\Sigma, \Omega)$ of all immersions of $\Sigma$ into $M$ there is an important functional $A$ which is defined as the area($n$-volume)  of  the induce metric $i^*(g)$ on $\Sigma$ by $i\in\mathcal{I}(\Sigma, M)$. It is well known that the first variation of $A$ is 
\begin{equation}\label{eqn1}
A'(0)= \int_\Sigma [\textrm{div}(Y^t)-\langle Y, \vec{H}\rangle] d\mu= \int_{\partial\Sigma }\langle Y^t, \eta\rangle ds- \int_\Sigma \langle Y, \vec{H}\rangle d\mu.
\end{equation}
Here $Y$ is the variation vector field, $Y^t$ is the tangent part of $Y$ and $\eta$ is the outward normal vector along $\partial\Sigma$.\\

From the viewpoint of Morse theory, it is essential to study  the second variation at the critical points of the functional $A$ which  involves a symmetric bi-linear form in a suitable function space. The second variation has the following structural formula, for smooth functions $u, v$, 
\begin{equation}
\label{generalindexform}
Q(u,v) = \int_{\Sigma}\Big(\left\langle{\nabla u,\nabla v}\right\rangle-p uv\Big) d\mu-\int_{\partial \Sigma}q uv ds. 
\end{equation} 
Here functions $p, q$, determined by the geometry of $\Sigma$, presumably depend on the particular variational problem we consider. Let $\text{MI}(Q)$ denote the Morse index of the bi-linear form $Q$ on the vector space of smooth functions, $C^\infty(\Sigma)$. It is the maximal dimension of a subspace on which $Q(\cdot, \cdot)$ is negative definite. Generally, $\text{MI}(Q)$ is the index \textit{without any constraint} and essentially counts the number of distinct deformations that decrease the area to the second order. \\

For manifolds with boundaries i.e. $\partial \Sigma\ne \emptyset$,  from (\ref{eqn1}) one could consider  suitable conditions of  partitioning of a convex body by hyper-surfaces with least area, for example, under a type I to type II constraint \cite{BS79}. Type I requires the partitions to have prescribed volume while type II preserves the wetting boundary area. The constraint of preserving enclosed volume is particularly popular in literature, normally associated with constant-mean-curvature (CMC) hypersurfaces. \\

The index with a constraint is the index of $Q(\cdot, \cdot)$ restricted to a smaller function space. The relation between these notions, with and without a constraint, has only been studied at special cases \cite{BB00, LR89, koise01, vogel87, souam19}. We will use the abstract formulation in \cite{TZindexconstraintsI20} to  determine such relation for several variational problems in geometry for compact hypersurfaces. The non-compact case will be address elsewhere. \\

Towards that goal, let $\Sigma^n \subset \Omega^{n+1}$ are manifolds possibly with boundaries. Furthermore, let's assume that $\Sigma$ is a two-sided critical point of a geometric functional with some constraint. In the presence of boundaries, we assume $\Omega$ is diffeomorphic to a Euclidean ball and the immersion is proper; that is, $\partial\Sigma=\Sigma\cap \partial \Omega$. Also all critical points considered belong to the family of capillary surfaces, defined as having CMC and intersecting the ambient manifold at a constant angle. Thus, when there is no boundaries, the latter is vacuously true and closed CMC hypersurfaces belong to it. \\

For the history of capillary surfaces, we refer to an article of  Finn-McCuan-Wente \cite{FMW} and Finn's book \cite{Finn2} for a survey about the mathematical theory of capillary surfaces.  This subject has received plenty of interests recently; see the work of G. Wang and C. Xia \cite{HC18}, H. Li and C. Xiong \cite{WX19} where they proved that all weakly stable capillary hypersurfaces in the unit Euclidean ball must be totally umbilical. When the angle is right, it is just a free boundary hypersurface,  a subject of great current interest,  which has produced many beautiful results;  see, for example, the works of A. Fraser-M.Li\cite{FL14} and A.Fraser-R. Schoen\cite 
{FS11} and \cite{FS16}, M. Li and X. Zhou \cite{LZ16regularity}, D. Maximo, I. Nunes, and G. Smith \cite{MNS16}, and the references therein.\\

Also, it is interesting that for all setups that we will consider the index form stays the same. That is, regarding equation (\ref{generalindexform}), 
\begin{align*}
p &:= \text{Rc}^{\Omega}(\nu,\nu)+|\A^\Sigma|^2,\\
q &:= \frac{1}{\sin\th}\A^{\partial\Omega}(\bar{\nu},\bar{\nu})+\cot\th \A^{\Sigma}(\eta,\eta).
\end{align*}

Here, $\text{Rc}$ denotes the Ricci curvature of $\Omega$; $\nu$ is a unit normal vector of $\Sigma\subset \Omega$; $\A$ is the second fundamental form; $\th\in (0,\pi)$ is the constant intersecting angle; $\bar{\nu}$ is a unit normal vector of $\partial\Sigma\subset \partial\Omega$; $\eta$ is a unit normal vector of $\partial\Sigma\subset\Sigma$.\\

First, we consider the type I partitioning of a convex body. The functional is a linear combination of the area of the hypersurface and the wetting area defined by its boundary components inside $\partial\Omega$. The constraint is fixing the enclosed volume of the region bounded by $\Sigma$ and parts of $\partial \Omega$. It can be shown that a critical point is characterised by having CMC and intersecting $\partial \Omega$ at a constant angle, so it is a capillary hypersurface.  \\

 The type-I (capillary) Morse index, Definition \ref{typeIMIdef}, is the index of $Q(\cdot, \cdot)$ in the space of smooth functions with zero average\footnote{It is also called weak Morse index for CMC surfaces. The capillary hypersurface is weakly stable if and  only if  the type-I Morse index is $0$.}. Applying our abstract formulation in \cite{TZindexconstraintsI20}  yields the following (see Theorem \ref{app2}). 

\begin{theorem}
	\label{app1}
	Let $\Sigma\subset \Omega$ be a capillary hypersurface. Then the type-I Morse index is equal to $\text{MI}(Q)-1$ if and only if there is a smooth function $u$ such that 
	\begin{equation*}
	\begin{cases}
	(\Delta+p)u &=-1 \text{ on } \Sigma,\\
	\nabla_\eta u &=q u \text{ on } \partial \Sigma,\\
	\int_{\Sigma}u &\leq 0. 
	\end{cases}
	\end{equation*}
	Otherwise, it is equal to $\text{MI}(Q)$.
\end{theorem}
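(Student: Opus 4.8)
The plan is to reduce Theorem \ref{app1} to the abstract index-with-constraint machinery of \cite{TZindexconstraintsI20}. The type-I Morse index is by definition $\text{MI}(Q|_V)$, where $V = \{u \in C^\infty(\Sigma) : \int_\Sigma u\,d\mu = 0\}$ is the codimension-one subspace cut out by the single linear constraint $\ell(u) = \int_\Sigma u\,d\mu$. Since $V$ has codimension $1$ in $C^\infty(\Sigma)$, the general principle (min-max / eigenvalue interlacing for the quadratic form $Q$ relative to the $L^2$ inner product) gives that $\text{MI}(Q|_V)$ equals either $\text{MI}(Q)$ or $\text{MI}(Q)-1$, and the two cases are distinguished by whether the negative directions of $Q$ can all be arranged to satisfy the constraint. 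So the whole content is to identify the precise criterion for the drop by one, and the abstract theorem from Paper I should package exactly this.

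First I would set up the associated self-adjoint operator: $Q(u,v) = \int_\Sigma \langle \nabla u,\nabla v\rangle - p\,uv\,d\mu - \int_{\partial\Sigma} q\,uv\,ds$ is the bilinear form of the Robin-type operator $L = \Delta + p$ with boundary condition $\nabla_\eta u = q\,u$ on $\partial\Sigma$. Thus $\text{MI}(Q)$ counts negative eigenvalues of $(L, \text{Robin b.c.})$. The abstract result of \cite{TZindexconstraintsI20} applied to the single functional $\ell$ should say: the constrained index drops by one precisely when $\ell$ is ``not supported entirely on the nonnegative part,'' and the clean way this gets detected is via solvability of the equation $L u = -1$ (equivalently $(\Delta+p)u = -1$) with the Robin condition $\nabla_\eta u = q\,u$ — here $-1$ is chosen because $\ell$ is represented by the constant function $1$ in the $L^2$ pairing, i.e. $\ell(v) = \int_\Sigma 1 \cdot v\,d\mu$. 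Once such a $u$ exists, the sign of $\int_\Sigma u\,d\mu = \ell(u) = Q(u, \cdot)\text{-pairing value} = -\langle L^{-1}1, 1\rangle$-type quantity decides whether the constraint genuinely kills a negative direction; the condition $\int_\Sigma u \le 0$ is exactly the statement that this pairing is $\le 0$, which is what forces the index to be $\text{MI}(Q)-1$ rather than $\text{MI}(Q)$.

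Concretely, I would: (i) recall the statement of the relevant theorem in \cite{TZindexconstraintsI20} (the abstract index comparison for a form $Q$ restricted to the kernel of finitely many linear functionals), specializing to one functional $\ell = \int_\Sigma \cdot\, d\mu$; (ii) verify the hypotheses of that abstract theorem hold in the capillary setting — principally that $Q$ has finite index and closed range properties on $C^\infty(\Sigma)$ via ellipticity of $L$ and compactness of $\Sigma$, and that $\ell$ is continuous in the relevant topology; (iii) translate the abstract dichotomy: solvability of $Lu = -\ell^\sharp$ (with $\ell^\sharp = 1$) under the Robin boundary condition, together with the sign condition $\ell(u) \le 0$, is equivalent to the drop-by-one case; (iv) unwind the boundary condition and operator back to $(\Delta + p)u = -1$, $\nabla_\eta u = q u$, $\int_\Sigma u \le 0$, matching the stated system. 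Note there is a subtlety when $\ell^\sharp = 1$ lies in the range of $L$ only up to the cokernel: if $L$ is not invertible (i.e. $0$ is a Robin eigenvalue), one must check the Fredholm solvability condition, and the abstract theorem should handle this by phrasing things in terms of existence of \emph{some} $u$ solving the equation rather than via $L^{-1}$.

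The main obstacle I expect is step (iii): correctly matching the abstract criterion from Paper I — which is presumably phrased in terms of an auxiliary quadratic form or a determinant/sign condition on a finite-dimensional reduction — to the concrete PDE system with the inequality $\int_\Sigma u \le 0$, and in particular getting the direction of the inequality and the borderline ($=0$) case right. The borderline case, where $\int_\Sigma u = 0$ exactly, is delicate because it corresponds to $1$ being $L^2$-orthogonal to the relevant negative eigenfunction combination, and one must check whether the index still drops; the formulation in the theorem (with $\le$) asserts it does, and verifying this requires care with whether $Q$ is negative \emph{definite} versus negative \emph{semidefinite} on the critical subspace. I would resolve it by appealing directly to the precise hypotheses of the abstract theorem in \cite{TZindexconstraintsI20} rather than re-deriving the finite-dimensional linear algebra, so that the inequality convention is inherited consistently.
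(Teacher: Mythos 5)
Your approach is the same as the paper's: realize the type-I index as $\text{MI}^\phi(Q)$ on $H^1(\Sigma)$ for $\phi(u)=\int_\Sigma u\,d\mu$, apply the single-functional abstract Theorem \ref{abstractMorse} from \cite{TZindexconstraintsI20}, translate $Q$-criticality into the Robin boundary-value problem $(\Delta+p)u=-1$, $\nabla_\eta u=qu$, $\int_\Sigma u\le 0$, and invoke elliptic regularity to upgrade the weak solution.

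The one place where your outline is imprecise is the hypothesis check in your step (ii). What Theorem \ref{abstractMorse} actually requires is not a vague ``closed range property'' of $Q$, but specifically that $\phi$ is not a \emph{pure limit point} of $\text{ran}(\mathcal{Q})$, i.e.\ $\phi\notin\overline{\text{ran}(\mathcal{Q})}\setminus\text{ran}(\mathcal{Q})$. The paper establishes this via the Fredholm alternative for the Robin problem (Theorem \ref{FredholmRobin}) applied with $f=1$, $g=0$: either $Ju=1$, $\nabla_\eta u=qu$ has a weak solution, so $\phi\in\text{ran}(\mathcal{Q})$; or the solvability criterion fails, so $\text{Ker}(\mathcal{Q})\neq\{0\}$ and some $v\in\text{Ker}(\mathcal{Q})$ has $\phi(v)=(1,v)_{L^2(\Sigma)}\neq 0$, whence the Riesz representative $\bar\phi$ is not orthogonal to $\text{Ker}(\mathcal{Q})$ and therefore $\phi\notin\overline{\text{ran}(\mathcal{Q})}$. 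You do flag a ``Fredholm solvability condition,'' but you place it in step (iv) as a side remark about inverting $L$; in the paper that Fredholm dichotomy \emph{is} the verification of the abstract theorem's hypothesis and belongs in step (ii). Relatedly, the borderline worry you raise about $\int_\Sigma u=0$ is already absorbed into the $\le$ in the abstract $S$-criticality condition, so no separate interlacing argument is needed. With the Fredholm step reassigned, your outline tracks the paper's proof.
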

\begin{remark}
	We note that our analysis is applicable and easier for the cases of closed hypersurfaces and the fixed boundary problem. See Section \ref{closedcase} for more details. In particular, we obtain a generalization of \cite{koise01} and \cite{souam19}.  
\end{remark}
\begin{remark}
	Also, either case might happen. See Subsection \ref{subcylinder} for examples. 
\end{remark}

Next, we consider the type II partitioning problem. The setup is exactly the same as the Type-I described above except for the constraint. Here, instead of fixing the enclosed volume, we fix the wetting area. Somewhat surprisingly, the index form is given by the same bilinear form as above. Thus, the type-II Morse index, Definition \ref{typeIIMIdef}, is the index of $Q(\cdot, \cdot)$ in the space of smooth functions with zero boundary average. Applying the abstract formulation yields the following.  
\begin{theorem}
	\label{app3}
	Let $\Sigma\subset \Omega$ be a stationary hypersurface of type II partitioning. Then its type-II Morse index is equal to $\text{MI}(Q)-1$ if and only if there is a smooth function $u$ such that 
	\begin{equation*}
	\begin{cases}
	{(\Delta+p)}u &=0 \text{ on } \Sigma,\\
	\nabla_\eta u -u&= 1 \text{ on } \partial \Sigma,\\
	\int_{\Sigma}u &\leq 0. 
	\end{cases}
	\end{equation*}
	Otherwise, it is equal to $\text{MI}(Q)$.
\end{theorem}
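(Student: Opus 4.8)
The plan is to apply the abstract Morse index formulation from \cite{TZindexconstraintsI20} with the linear constraint corresponding to type II, exactly as in the proof of Theorem \ref{app1} but with a different constraint functional. First I would identify the constraint: the type-II Morse index is $\mathrm{MI}(Q)$ restricted to the subspace $V=\{\,v\in C^\infty(\Sigma):\int_{\partial\Sigma} v\,ds=0\,\}$, i.e. the kernel of the bounded linear functional $\ell(v)=\int_{\partial\Sigma} v\,ds$. The abstract theorem of Part I says that the index of $Q$ on $V=\ker\ell$ differs from $\mathrm{MI}(Q)$ by at most $1$, it equals $\mathrm{MI}(Q)-1$ precisely when a certain ``test'' condition holds, and otherwise equals $\mathrm{MI}(Q)$. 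The entire task is therefore to unwind that abstract condition into the concrete boundary value problem displayed in the statement.

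The key step is to represent the functional $\ell$ via the bilinear form. Because $\ell(v)=\int_{\partial\Sigma} v\,ds$ is (for $\Sigma$ compact with the relevant spectral nondegeneracy) of the form $\ell(v)=Q(u,v)$ for a unique $u\in C^\infty(\Sigma)$ modulo the kernel of $Q$ — one solves the associated Euler--Lagrange problem for $Q(u,\cdot)=\ell(\cdot)$. Integrating $Q(u,v)$ by parts using \eqref{generalindexform},
\begin{equation*}
Q(u,v)=-\int_\Sigma\big((\Delta+p)u\big)v\,d\mu+\int_{\partial\Sigma}\big(\nabla_\eta u-qu\big)v\,ds,
\end{equation*}
and matching this against $\ell(v)=\int_{\partial\Sigma} 1\cdot v\,ds$ for all $v$ forces $(\Delta+p)u=0$ on $\Sigma$ and $\nabla_\eta u-qu=1$ on $\partial\Sigma$. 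This is exactly the first two lines of the system in the statement. The abstract criterion of Part I for the index to drop by one then reads $Q(u,u)\le 0$ (the sign that makes the constrained space ``lose'' a negative direction rather than retain all of them), and since $Q(u,u)=\ell(u)=\int_{\partial\Sigma} u\,ds$, it remains to check that this equals $\int_\Sigma u\,d\mu$ up to sign — but here one must be careful: in fact the correct reduction, using $(\Delta+p)u=0$ and the boundary condition $\nabla_\eta u-qu=1$, gives $Q(u,u)=\int_{\partial\Sigma}(\nabla_\eta u-qu)u\,ds=\int_{\partial\Sigma} u\,ds$; the replacement of $\int_{\partial\Sigma} u\,ds$ by $\int_\Sigma u\,d\mu$ in the statement comes from the normalization chosen for $u$ in the type-II set-up, where the constraint variation is realized by a bulk quantity. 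I would therefore track that normalization carefully and show $\int_{\partial\Sigma} u\,ds$ and $\int_\Sigma u\,d\mu$ carry the same sign under the stated conditions, or else adjust $u$ by an element of $\ker Q$ without affecting the first two equations.

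The main obstacle I anticipate is the bookkeeping around degeneracy: the representation $\ell=Q(u,\cdot)$ need not have a solution if $\ell$ does not annihilate $\ker Q$, and $u$ is only determined modulo $\ker Q$, so one must argue (as in Part I) that the relevant quantity $Q(u,u)$, equivalently the sign of $\int_\Sigma u$, is well defined independent of these choices, and handle the borderline case $Q(u,u)=0$ so that the strict/nonstrict inequality in the third line of the system lands correctly. A secondary technical point is ensuring the Euler--Lagrange boundary condition is posed in the right function space (Sobolev traces on $\partial\Sigma$) so that elliptic regularity upgrades the weak solution $u$ to the smooth function asserted; this is routine given $\Sigma$ compact and the coefficients $p,q$ smooth. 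Everything else is a direct transcription of the abstract theorem of \cite{TZindexconstraintsI20} to this constraint, parallel to the proof of Theorem \ref{app1}.
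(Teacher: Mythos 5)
Your approach is exactly the paper's: realize the type-II index as $\mathrm{MI}^\varphi(Q)$ with $\varphi(v)=\int_{\partial\Sigma}v\,ds$, apply Theorem \ref{abstractMorse}, use the Fredholm alternative with Robin boundary condition (Theorem \ref{FredholmRobin}, with $f=0$, $g=1$) to rule out $\varphi$ being a pure limit point, and unwind the $Q$-criticality condition by integrating $Q(u,\cdot)$ by parts, followed by Schauder regularity. Your integration by parts correctly yields the boundary-value problem $(\Delta+p)u=0$ on $\Sigma$, $\nabla_\eta u-qu=1$ on $\partial\Sigma$, and the criterion from the abstract theorem is $\varphi(u)=\int_{\partial\Sigma}u\,ds\le 0$.

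On the point you flag and try to reconcile: you are right to be suspicious, and you should not try to force equivalence. There is no general reason a solution of $(\Delta+p)u=0$, $\nabla_\eta u=qu+1$ would have $\int_\Sigma u\,d\mu$ and $\int_{\partial\Sigma}u\,ds$ of the same sign, and adjusting by $\ker Q$ changes neither integral when $\varphi$ annihilates $\ker Q$. In fact the paper's own proof (in the body, as Theorem \ref{app3dup}) states the $Q$-criticality condition exactly as your derivation predicts: $-Ju=0$, $Du-qu=1$, $(1,u)_{L^2(\partial\Sigma)}\le 0$. The displayed theorem statement's lines $\nabla_\eta u-u=1$ and $\int_\Sigma u\le 0$ appear to be transcription slips for $\nabla_\eta u-qu=1$ and $\int_{\partial\Sigma}u\,ds\le 0$; the same slips recur in Corollary \ref{indextypeII}. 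So the conclusion of your hedged paragraph should be, more decisively, that the correct third condition is $\int_{\partial\Sigma}u\,ds\le 0$ and no normalization trick is needed. With that change your proposal is a complete and faithful reproduction of the argument.
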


When considering the partitioning of the Euclidean ball, $\Omega=\mathbb{B}^{n+1}$, we introduce a generalization of type I and type II constraint, called type I+II. It essentially corresponds to the partitioning of a convex body when preserving both the wetting area and enclosed volume. The type I partitioning problem is its Lagrange multiplier version. A corollary is that we streamline the type I and type II stability results of \cite{WX19} and \cite{GX20} as special cases.\\

The type-I+II Morse index of a FBMS, Definition \ref{typeIIIMIdef}, is the index of $Q(\cdot, \cdot)$ in the space of functions with zero boundary average \text{and} zero average. We will give an estimate of index by the number of nonnegative eigenvalues of a matrix calculated by the geometry of $\Sigma$.  Let $X: \Sigma^n\to \mathbb{R}^{n+1}$ be  an immersed  capillary hypersurface in the Euclidean unit ball $\mathbb{B}^{n+1}$. Let $\circA$ denote the traceless second fundamental form. 

\begin{definition}
	A capillary hypersurface is called $|\circA|^2$-scale equivalent to a hyper-planar domain if it is not umbilical and $|\circA|^2 X$ is on a hyperplane.  
\end{definition}
\begin{remark}  A capillary hypersurface is $|\circA|^2$-scale equivalent to a hyper-planar then its is on half-ball and the level sets of $|\circA|^2$ are hyper-planar. We do not know any example of such hypersurfaces.  
\end{remark}
 For any coordinate system of $\mathbb{R}^{n+1}$, $\{ e_1,\cdots ,e_{n+1}\}$, we can restate Theorem \ref{thm1} as the following.
\begin{theorem}\label{thm1.6}
	Assume $X: \Sigma\to \mathbb{R}^{n+1}$ is an immersed  capillary hypersurface in the Euclidean unit ball $\mathbb{B}^{n+1}$. Let $\ell$ be the number of nonnegative eigenvalues of the matrix  $\Upsilon$:
	$\Upsilon=(\Upsilon_{ij})_{(n+1)\times(n+1)}$ with
\[
\begin{split}
\Upsilon_{ij}= \int_\Sigma n|\circA|^2[\langle (n-H\langle X,\nu\rangle)X+(n\cos\theta+\frac{H}2(|X|^2+1))\nu,e_i\rangle\langle X,e_j\rangle] 
\end{split}
\]
where $|\circA|$ is the  norm of the traceless second fundamental form of $\Sigma$.
	Then
	\begin{enumerate}
		\item If $\Sigma$ is type-I+II stable (index zero) then it is totally umbilical.
		\item If it is $|\circA|^2$-scale equivalent to a hyper-planar domain then its type-I+II Morse index is greater than or equal to $\ell-1$.   
		\item Otherwise, the type-I+II Morse index is greater than or equal to $\ell$. 
	\end{enumerate}
\end{theorem}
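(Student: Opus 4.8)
By Definition~\ref{typeIIIMIdef} the type-I+II Morse index equals $\text{MI}(Q|_W)$, where $Q$ is the form \eqref{generalindexform} --- here with $p=|\A^\Sigma|^2$ and $q=\tfrac1{\sin\theta}+\cot\theta\,\A^\Sigma(\eta,\eta)$, since $\mathbb{B}^{n+1}$ is flat and $\partial\mathbb{B}^{n+1}$ is a round sphere --- and $W=\{u\in C^\infty(\Sigma):\int_\Sigma u\,d\mu=0,\ \int_{\partial\Sigma}u\,ds=0\}$. Because $\text{MI}(Q|_W)\ge\dim V'$ for every subspace $V'\subset W$ on which $Q$ is negative definite, the entire statement reduces to producing such a $V'$ of the right dimension, together with the known weak-stability fact for part~(1).

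The test functions come from the M\"obius structure of the ball. For $a\in\mathbb{R}^{n+1}$ set $V_a(x)=\tfrac{1+|x|^2}{2}a-\langle x,a\rangle x$, a conformal Killing field of $\mathbb{R}^{n+1}$ tangent to $\partial\mathbb{B}^{n+1}$, and $\varphi_a=\langle V_a,\nu\rangle|_\Sigma$. The crux is to show that, after correcting $\varphi_a$ by a linear combination of $\langle X,a\rangle$, of the function $1+\tfrac1nH\langle X,\nu\rangle$ (which on a flat-ambient CMC hypersurface satisfies $(\Delta+p)(1+\tfrac1nH\langle X,\nu\rangle)=|\circA|^2$), and of the constant $1$ --- the correction being chosen so that the result lies in $W$, via the divergence-theorem identities on $(\Sigma,\partial\Sigma)$ for $X$ and its tangential part --- one obtains $\psi_a\in W$ with
\begin{equation*}
(\Delta+p)\psi_a=-\,c\,n\,|\circA|^2\,\langle Y,a\rangle\quad\text{on }\Sigma,\qquad \nabla_\eta\psi_a-q\,\psi_a=0\quad\text{on }\partial\Sigma,
\end{equation*}
for a fixed $c>0$ and $Y=(n-H\langle X,\nu\rangle)X+(n\cos\theta+\tfrac H2(|X|^2+1))\nu$. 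This is a Simons/Codazzi computation for $(\Delta+|\A|^2)\langle V_a,\nu\rangle$ in which every term without a factor $|\circA|^2$ is absorbed by the corrections, or, on $\partial\Sigma$, by the capillary angle relation $\langle X,\nu\rangle=\cos\theta$ and the umbilicity of $\partial\mathbb{B}^{n+1}$. Integrating by parts then gives $Q(\psi_a,\psi_b)=-c\int_\Sigma n|\circA|^2\langle Y,a\rangle\langle X,b\rangle\,d\mu=-c\,a^{T}\Upsilon b$, and since $Q$ is symmetric, so is $\Upsilon$.

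Granting this, the rest is linear algebra plus one perturbation. On $V=\SPAN\{\psi_{e_1},\dots,\psi_{e_{n+1}}\}\subset W$ the form $Q$ is represented by $-c\,\Upsilon$, hence is negative definite on the span of the positive eigenspaces of $\Upsilon$. A null direction $\psi_a$ of $Q|_V$ that is not a Jacobi field of $Q|_W$ can be replaced by $\psi_a+tw$ with $w\in W$, $Q(\psi_a,w)\ne0$, and $t$ small of the appropriate sign, producing a strictly negative direction; this upgrades the count from the number of positive eigenvalues of $\Upsilon$ to the number $\ell$ of nonnegative ones. A dimension is lost only when the map $a\mapsto\psi_a\in W$ fails to be injective, and a direct check shows this occurs exactly when $|\circA|^2X$ lies on a hyperplane, i.e.\ in the $|\circA|^2$-scale-equivalent case; this gives~(2), and~(3) otherwise. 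Finally for~(1): if $\Sigma$ is type-I+II stable then $Q|_W\ge0$, so $-c\,\Upsilon\ge0$ on $V$ and hence $\mathrm{tr}\,\Upsilon\le0$; on the other hand $\sum_i Q(\psi_{e_i},\psi_{e_i})=-c\,\mathrm{tr}\,\Upsilon=-c\int_\Sigma n|\circA|^2\langle Y,X\rangle\,d\mu$ is, by the computation underlying \cite{WX19} and \cite{GX20}, nonpositive with equality exactly for totally umbilical $\Sigma$, so $|\circA|\equiv0$.

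The main obstacle is the displayed pair of identities for $\psi_a$. One must carry the Simons-type expansion of $(\Delta+|\A|^2)\langle V_a,\nu\rangle$ all the way through, determine exactly which multiples of $\langle X,a\rangle$, $1+\tfrac1nH\langle X,\nu\rangle$ and $1$ simultaneously land $\psi_a$ in $W$ and annihilate every lower-order term, and --- the subtlest point --- verify that the boundary operator $\nabla_\eta-q$ kills the corrected test function along $\partial\Sigma$, where the contact-angle condition and the roundness of $\partial\mathbb{B}^{n+1}$ do the work. Everything downstream --- the eigenvalue count, the perturbation upgrade, and the one-dimensional defect in the scale-equivalent case --- is then routine.
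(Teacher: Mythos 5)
Your strategy coincides with the paper's: pick explicit test functions in the constraint space $W$, show they satisfy the Robin boundary condition $\nabla_\eta u-qu=0$ and that $J$ of each is $n|\circA|^2$ times a coordinate function, and then read off a negative subspace for $Q|_W$ from the Gram matrix $\Upsilon$. Your $\psi_a$ is, after normalization, exactly the paper's $u_a=\langle Z+n\cos\theta\,\nu,a\rangle=\langle Y,a\rangle$, and your $V_a$-based description is an equivalent parametrization; so the engine is the same. Three points, however, need fixing.

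First, there is an internal inconsistency in the displayed Jacobi equation: you write $(\Delta+p)\psi_a=-cn|\circA|^2\langle Y,a\rangle$, but the integration-by-parts formula $Q(\psi_a,\psi_b)=-c\int n|\circA|^2\langle Y,a\rangle\langle X,b\rangle$ that you quote next (and the definition of $\Upsilon$) require $J\psi_a$ to be proportional to $|\circA|^2\langle X,a\rangle$, not $|\circA|^2\langle Y,a\rangle$; this is exactly the paper's identity $J(Z+c_1\nu)=n|\circA|^2 X$ in Proposition \ref{prop3.1}, so the right-hand side should involve $X$, not $Y$.

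Second, and more seriously, the ``perturbation'' step by which you pass from the number of positive eigenvalues of $\Upsilon$ to the number $\ell$ of nonnegative eigenvalues is not justified. You need to add a correction to each null direction and keep the resulting collection negative definite simultaneously, which is a genuine (if classical) argument; more importantly, when a null $\psi_a$ lies in the radical of $Q|_W$ no such correction exists, and in the scale-degenerate case of part (2) this is precisely what can happen. The paper avoids all of this by invoking the abstract result Theorem \ref{thmindex}/\ref{severalfunc} from the companion paper, which bakes the nonnegative-eigenvalue count into the structural hypothesis $\mathcal{Q}u_i=\bar\phi_i$; your sketch neither cites nor replaces that machinery. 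Third, the criterion for losing a dimension in case (2) is not ``$a\mapsto\psi_a$ fails to be injective''; the paper's criterion is linear dependence of the functionals $\bar\phi_1,\dots,\bar\phi_{n+1}$ (multiplication by $n|\circA|^2 X_i$) with the constraint functional $\chi$ (integration over $\Sigma$), which gives the condition $|\circA|^2\langle X,a\rangle\equiv\text{const}$. These are different conditions, and your version does not match the stated hypothesis of the theorem.
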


\begin{remark}
	The first part recovers results of  Ros-Vergasta \cite{RV95}, Wang-Xia \cite[Theorem 1.1]{WX19}, and \cite{GX20} as special cases.
\end{remark}
\begin{remark}
	Theorem \ref{thm1.6} can be applied to estimate Morse indices for special examples. In some cases,  $\circA$, coordinate functions, and a normal vector are easy to compute. And we see in Section \ref{capillaryEuclidean} that, for round cylinders, the estimate is sharp.  
\end{remark}

We also apply our technique to study the indices of capillary minimal surfaces. When the angle is right, they are called free boundary minimal surfaces (FBMS). It is observed that a FBMS is a critical point for either type I, type II, or type I+II partitioning problem. So the following result might be of independent interest. \\

\begin{corollary}\label{weakMIfbms}
	Let $X: \Sigma\to \mathbb{R}^{n+1}$ be  an immersed  free boundary minimal hypersurface in the Euclidean unit ball $\mathbb{B}^{n+1}$. If it has type I+II Morse index less than $n+1$, then it must be totally geodesic.
\end{corollary}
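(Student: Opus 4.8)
The plan is to deduce Corollary~\ref{weakMIfbms} directly from Theorem~\ref{thm1.6} by specializing to the minimal case $H=0$ and choosing the coordinate system optimally. When $H=0$ the matrix $\Upsilon$ simplifies considerably: each entry becomes
\[
\Upsilon_{ij}= \int_\Sigma n|\circA|^2\,\langle n X + n\cos\theta\,\nu,\,e_i\rangle\,\langle X,e_j\rangle
= n^2\int_\Sigma |\circA|^2\,\langle X+\cos\theta\,\nu,\,e_i\rangle\langle X,e_j\rangle,
\]
and for a free boundary hypersurface $\theta=\pi/2$, so $\cos\theta=0$ and $\Upsilon_{ij}= n^2\int_\Sigma |\circA|^2\langle X,e_i\rangle\langle X,e_j\rangle$. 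Thus $\Upsilon$ is (a positive multiple of) a Gram-type matrix built from the vector-valued function $X$ weighted by $|\circA|^2$.

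First I would argue that if $\Sigma$ is not totally geodesic then $|\circA|^2\not\equiv 0$, and the positive semidefinite matrix $M_{ij}:=\int_\Sigma |\circA|^2\langle X,e_i\rangle\langle X,e_j\rangle$ has rank equal to the dimension of the span of the position-vector components on the set $\{|\circA|^2>0\}$. The key geometric input is that the position vector $X$ of an immersed hypersurface in $\mathbb{B}^{n+1}$ cannot lie in a proper subspace on an open set of positive measure unless $\Sigma$ is contained in a hyperplane through the origin — and since $\partial\Sigma=\Sigma\cap\partial\mathbb{B}^{n+1}$, such a $\Sigma$ would be a flat equatorial disk, hence totally geodesic. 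So, as long as $\Sigma$ is not totally geodesic, $M$ (and therefore $\Upsilon$) is positive definite, i.e. $\ell=n+1$. Then Theorem~\ref{thm1.6}(2)–(3) gives type-I+II Morse index $\geq \ell-1=n$ in the exceptional "$|\circA|^2$-scale equivalent" case and $\geq \ell=n+1$ otherwise; in either case the index is at least $n$. To get the strict statement "index $<n+1$ forces totally geodesic," I would need to rule out the borderline: if $\Sigma$ is $|\circA|^2$-scale equivalent to a hyper-planar domain, then $|\circA|^2 X$ lies on a hyperplane $\{x\cdot a = c\}$; I would show that for a free boundary hypersurface this forces $c=0$ (evaluating at boundary points where $|X|=1$ and using that $X$ is transverse to $\partial\mathbb{B}^{n+1}$, combined with $\int_\Sigma |\circA|^2 X = $ a computable multiple of a normal-type vector that must vanish), which then drops the rank of $\Upsilon$ by at most one but simultaneously the scale-equivalence hypothesis itself cannot hold nontrivially — pinning the index at $\geq n+1$ unless totally geodesic, and in the genuinely exceptional sub-case we still have $\geq n$, contradicting index $<n+1$ only if we can also exclude exactly $n$.

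Reconsidering, the cleanest route avoids the borderline analysis: I would show that for a non-totally-geodesic free boundary minimal hypersurface the matrix $\Upsilon$ is \emph{strictly} positive definite (rank $n+1$) AND that the scale-equivalence condition fails, because $|\circA|^2 X$ lying in a hyperplane together with $\Sigma$ being minimal and free boundary is incompatible with $\Sigma$ not being a flat disk — this uses that the tangential components of $X$ along $\Sigma$ are eigenfunctions-type objects (from $X^\top = \nabla(\tfrac12|X|^2)$ and the free boundary condition $X=\nu$ on $\partial\Sigma$ up to sign is false in general, but $\langle X,\eta\rangle$ relations on the boundary do constrain things). Then part (3) applies and gives index $\geq n+1$, contrapositively yielding the corollary. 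The main obstacle I anticipate is precisely excluding the "$|\circA|^2$-scale equivalent" degenerate case for free boundary minimal hypersurfaces — the paper explicitly says no such example is known, so I would want either a clean argument that it cannot occur in the minimal free-boundary setting, or, failing that, to show that even in that case the rank computation still yields $\ell=n+1$ so that part (2) gives index $\geq n$, and then separately rule out index exactly $n$ by exhibiting one extra negative direction (e.g. using a well-chosen non-coordinate test function, such as a constant or $\langle X,\nu\rangle$, paired against the coordinate functions via $Q$), thereby closing the gap to $<n+1 \Rightarrow$ totally geodesic.
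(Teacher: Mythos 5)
Your overall strategy matches the paper's: specialize Theorem~\ref{thm1.6} to $H=0$, $\theta=\pi/2$, observe $\Upsilon_{ij}=n^2\int_\Sigma |\circA|^2 X_iX_j$ is a weighted Gram matrix, argue positive (semi-)definiteness via the fact that $X$ cannot lie in a hyperplane on an open subset of $\{|\circA|^2>0\}$ without $\Sigma$ being a flat disk, and then try to dismiss the ``$|\circA|^2$-scale equivalent'' alternative so that part~(3) rather than part~(2) applies. That last step is precisely where your argument has a genuine gap, and you acknowledge it yourself: you propose either a boundary/transversality argument to force $c=0$, or a supplementary negative test direction, but neither is carried out, and as written you only conclude index $\geq n$ in the degenerate case, which does not suffice for the statement.

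The missing ingredient is a simple integral identity that the paper already has in hand, namely Lemma~\ref{satisfyingconstraints}: each $u_i$ satisfies $\int_\Sigma u_i\,d\mu=0$. For a free boundary minimal hypersurface ($H=0$, $\cos\theta=0$) one has $u_i = nX_i$, so $\int_\Sigma X_i\,d\mu = 0$ for every coordinate function. Now if $\Sigma$ were $|\circA|^2$-scale equivalent to a hyper-planar domain, then after rotating coordinates and possibly replacing $e_i$ by $-e_i$ one would have $|\circA|^2\langle X,e_i\rangle \equiv c$ for some constant $c>0$ (if $c=0$ then $X_i\equiv 0$ on the open set $\{|\circA|^2>0\}$, which by unique continuation forces $\Sigma$ into a hyperplane, contradicting non-totally-geodesic). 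But $|\circA|^2\langle X,e_i\rangle \equiv c>0$ forces $\langle X,e_i\rangle>0$ everywhere, hence $\int_\Sigma X_i>0$, contradicting the identity. This is the short argument your proposal is reaching for with ``$\int_\Sigma |\circA|^2 X$ must vanish,'' but the correct vanishing integral is $\int_\Sigma X$, not $\int_\Sigma |\circA|^2 X$. With the degenerate case excluded, you are in case~(3) of Theorem~\ref{thm1.6}, $\ell=n+1$ follows from your Gram-matrix argument (together with the unique-continuation point), and the corollary follows.
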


An immediate consequence is the following.
\begin{corollary}\label{weakMIfbms1}
	Let $X: \Sigma\to \mathbb{R}^{n+1}$ be  an immersed  free boundary minimal hypersurface in the Euclidean unit ball $\mathbb{B}^{n+1}$. If it has type I or type II Morse index less than $n+1$, then it must be totally geodesic.
\end{corollary}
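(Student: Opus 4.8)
The plan is to derive Corollary \ref{weakMIfbms1} directly from Corollary \ref{weakMIfbms} by a monotonicity argument, using the fact — already recorded above — that for a free boundary minimal hypersurface the type I, type II, and type I+II partitioning problems all have the \emph{same} index form $Q$; only the admissible function space changes. Write $\mathcal{V}_{I}\subset C^\infty(\Sigma)$ for the space of smooth functions with zero average, $\mathcal{V}_{II}$ for those with zero boundary average, and $\mathcal{V}_{I+II}=\mathcal{V}_{I}\cap\mathcal{V}_{II}$ for those with both. By the definitions cited above (Definitions \ref{typeIMIdef}, \ref{typeIIMIdef}, \ref{typeIIIMIdef}), the type-I, type-II, and type-I+II Morse indices of $\Sigma$ are $\text{MI}(Q|_{\mathcal{V}_{I}})$, $\text{MI}(Q|_{\mathcal{V}_{II}})$, and $\text{MI}(Q|_{\mathcal{V}_{I+II}})$.

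First I would record the elementary linear-algebra fact that if $V\subseteq W$ are linear subspaces then $\text{MI}(Q|_{V})\le \text{MI}(Q|_{W})$, since any subspace of $V$ on which $Q$ is negative definite is in particular such a subspace of $W$. Applying this to the inclusions $\mathcal{V}_{I+II}\subseteq\mathcal{V}_{I}$ and $\mathcal{V}_{I+II}\subseteq\mathcal{V}_{II}$ gives
\[
\text{MI}(Q|_{\mathcal{V}_{I+II}})\ \le\ \text{MI}(Q|_{\mathcal{V}_{I}}),\qquad \text{MI}(Q|_{\mathcal{V}_{I+II}})\ \le\ \text{MI}(Q|_{\mathcal{V}_{II}}).
\]
Hence if the type-I Morse index of $\Sigma$ is less than $n+1$, i.e.\ at most $n$, then its type-I+II Morse index is also less than $n+1$; the same holds with type II in place of type I. In either case Corollary \ref{weakMIfbms} forces $\Sigma$ to be totally geodesic, which is exactly the assertion.

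The only points needing attention are supplied by the earlier part of the paper rather than by any new argument: that a free boundary minimal hypersurface is genuinely a stationary point of all three variational problems (so the three indices are defined), and that the corresponding index forms coincide with the common $Q$ displayed in the introduction. Granting these, there is no substantive obstacle — the corollary is a one-line consequence of Corollary \ref{weakMIfbms} together with monotonicity of the Morse index under restriction. If desired, one could append the complementary observation that deleting a single linear constraint changes the Morse index by at most one, so $\text{MI}(Q|_{\mathcal{V}_{I}})$ and $\text{MI}(Q|_{\mathcal{V}_{II}})$ each lie within one of $\text{MI}(Q|_{\mathcal{V}_{I+II}})$, but this refinement is not needed here.
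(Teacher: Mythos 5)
Your argument is correct and is precisely what the paper means by ``an immediate consequence'': the monotonicity $\text{MI}(Q|_{\mathcal{V}_{I+II}})\le\text{MI}(Q|_{\mathcal{V}_{I}})$ and $\text{MI}(Q|_{\mathcal{V}_{I+II}})\le\text{MI}(Q|_{\mathcal{V}_{II}})$ (recorded in the remark following Definition \ref{typeIIIMIdef}) combined with Corollary \ref{weakMIfbms}. This matches the paper's approach exactly, merely spelled out in more detail.
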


\begin{remark}
	Corollary \ref{weakMIfbms1} can also be deduced directly from the index estimate of \cite{FS16, tran16index}. Also there is a general version for capillary minimal hypersurfaces when the intersecting angle is not necessarily right (see Corollary \ref{cor7.5}).
\end{remark}

\subsection{Precise Index Computations}
It is an interesting problem to determine   the precise  Morse index for important examples. The computations have been done for critical catenoids as free boundary minimal hypersurfaces(see  see \cite{SZ16index, SSTZ17morse, tran16index, tran17gauss}). Also, in \cite{tran16index}, the first author develops a scheme to determine the Morse index of a bi-linear form associated with an elliptic boundary consideration. A slight generalization with a simpler proof is obtained in \cite{TZindexconstraintsI20} using the abstract formulation. So, together with results here, there is a procedure to precisely determine Morse indices with constraints. We give a few examples here. 
\begin{corollary}
	\label{precise1}
	Let $\Sigma\subset\mathbb{S}^{n+1}$ be a closed CMC surface of constant scalar curvature. Then, its weak Morse index is equal to 
	\[\text{MI}(Q)-1.\]
\end{corollary}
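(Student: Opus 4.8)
The plan is to invoke Theorem \ref{app1} (in its closed-manifold form, as announced in the remark after Theorem \ref{app1} and worked out in Section \ref{closedcase}) and verify that the trichotomy it offers collapses to the single alternative $\text{MI}(Q)-1$. Since $\Sigma\subset\mathbb{S}^{n+1}$ is closed, $\partial\Sigma=\emptyset$, so the boundary condition $\nabla_\eta u = qu$ is vacuous and the criterion for the weak (type-I) index to equal $\text{MI}(Q)-1$ reduces to the existence of a smooth $u$ with $(\Delta+p)u=-1$ on $\Sigma$ and $\int_\Sigma u\le 0$. Here, on $\Sigma^n\subset\mathbb{S}^{n+1}$, one has $p=\text{Rc}^{\mathbb{S}^{n+1}}(\nu,\nu)+|\A^\Sigma|^2 = n + |\A^\Sigma|^2$, and since $\Sigma$ is CMC with constant scalar curvature, the Gauss equation forces $|\A^\Sigma|^2$ to be constant; hence $p$ is a constant.

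The key step is therefore to solve $(\Delta + p)u = -1$ with the sign condition on the average. First I would observe that, because $p$ is constant, a natural ansatz is $u\equiv c$ for a constant $c$, which solves the PDE precisely when $pc=-1$, i.e. $c=-1/p$, provided $p\neq 0$; and the average condition $\int_\Sigma u = c\,|\Sigma| \le 0$ then holds exactly when $p>0$. So if $p>0$ we are immediately done. To see that $p>0$: on a closed CMC hypersurface in $\mathbb{S}^{n+1}$ one can test the stability operator against the constant function $1$, giving $Q(1,1) = -\int_\Sigma p \le 0$ only if $p\ge 0$ somewhere; more to the point, the standard fact (Simons-type) that constant functions are in the kernel of $\Delta$ shows $\Delta 1 = 0$, so $0$ is an eigenvalue, and $p$ being a positive constant follows from $n + |\A^\Sigma|^2 \ge n > 0$ — indeed $p = n + |\A^\Sigma|^2$ is manifestly strictly positive since $n\ge 1$. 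Thus the constant solution $u=-1/p<0$ works, $\int_\Sigma u < 0$, and Theorem \ref{app1}'s first alternative is realized.

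The only genuine subtlety — and the step I expect to require the most care — is confirming that the closed-case version of Theorem \ref{app1} genuinely reduces to "there exists $u$ with $(\Delta+p)u=-1$ and $\int_\Sigma u\le 0$" with no hidden solvability obstruction, i.e. that one does not need $-1$ to be orthogonal to the kernel of $\Delta+p$. This is exactly the content of the abstract formulation: the type-I index drops by one relative to $\text{MI}(Q)$ precisely when the relevant bordism/solvability criterion is met, and when $p>0$ the operator $\Delta+p$ has no kernel containing constants that would obstruct solvability against the constant right-hand side — in fact one should check that $-1$ is not $L^2$-orthogonal to any zero-eigenfunction of $\Delta+p$, but since the constant $-1/p$ is an honest solution this is automatic. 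Hence the proof is: cite Section \ref{closedcase}, note $p=n+|\A^\Sigma|^2$ is a positive constant by the Gauss equation and constancy of scalar curvature, exhibit $u=-1/p$, check $\int_\Sigma u<0$, and conclude the weak Morse index equals $\text{MI}(Q)-1$.
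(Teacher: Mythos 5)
Your proof is correct and, at its core, follows the same route as the paper: invoke the closed-manifold index theorem (Theorem \ref{app5}), use the Gauss equation to conclude that $p = n + |\A|^2$ is a positive constant, and verify the solvability criterion $(\Delta+p)u = -1$, $\int_\Sigma u \le 0$. The one place you genuinely streamline the argument is in that last verification. The paper invokes the Fredholm alternative (checking that the constant function $1$ is $L^2$-orthogonal to $\ker(\Delta+p)$ because $p>0$ is a nonzero eigenvalue of $-\Delta$) to establish that some solution $u$ of $(\Delta+p)u=-1$ exists, and then integrates $pu = -1 - \Delta u$ over $\Sigma$ to read off $\int_\Sigma u = -\mathrm{Area}(\Sigma)/p < 0$. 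You bypass Fredholm altogether by exhibiting the explicit constant solution $u \equiv -1/p$, whose integral is $-\mathrm{Area}(\Sigma)/p < 0$ on the nose; once $p$ is a positive constant, no abstract existence theory is needed. Your worry about a ``hidden solvability obstruction'' is thus resolved most efficiently by your own explicit solution. One small remark closing the loop: the sign of $\int_\Sigma u$ is in fact independent of which solution of $(\Delta+p)u=-1$ one takes, since any two solutions differ by an element of $\ker(\Delta+p)$, and that kernel is $L^2$-orthogonal to constants precisely because $p>0$; so the criterion in Theorem \ref{app5} is unambiguous and your single witness suffices.
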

\begin{remark}
	The weak Morse index is formally given by Definition \ref{weakMIdef}.
\end{remark} 
\begin{remark}
 When $n=2$, the scalar curvature is a multiple of the intrinsic Gauss curvature. Thus, readers can consult \cite{RS07} for a precise computation of $\text{MI}(Q)$ when the Gaussian curvature is vanishing. 	
\end{remark}	
	
  For capillary cases, the simplest nontrivial examples we know are round cylinders. It happens in critical cateniods that their indices become surprisingly high when the dimensions are increasing \cite{SSTZ17morse}. But for the capillary round cylinders, even for a fixed dimension, the Morse indices can be arbitrarily large when the radii are close to $1$ or $0$. More precisely, from Proposition \ref{prop-cyl} we have
\begin{theorem}  
	\label{precise3}
	For a round cylinder $Z:=\{(x,z)\in \mathbb{R}^{n}\times \mathbb{R},\,   |x|^2=r^2, ~~r<1\}\cap \mathbb{B}^{n+1}$, 
	\begin{enumerate}
		\item $\text{MI}(Q) \geq n+2$;
		\item When $r\rightarrow 0$ or $r\rightarrow 1$, $\text{MI}(Q)\rightarrow \infty$;
		\item There is an interval $0<a<r<b<1$ such that $\text{MI}(Q)=n+2$.  
	\end{enumerate} 
\end{theorem}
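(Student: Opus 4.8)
The plan is to compute $\text{MI}(Q)$ directly by separation of variables: reduce it to a family of one–dimensional Robin Sturm--Liouville problems indexed by spherical harmonics on $S^{n-1}$, analyze that family, and then read off the three assertions. First I would record the geometry. Writing $L:=\sqrt{1-r^2}$, the cylinder $Z$ is isometric to the Riemannian product $S^{n-1}(r)\times[-L,L]$; since $\mathbb B^{n+1}$ is flat we have $\text{Rc}^\Omega\equiv 0$, while the principal curvatures of $Z$ are $1/r$ with multiplicity $n-1$ together with $0$ along the axis, so $|\A^\Sigma|^2=(n-1)/r^2$ and $p=(n-1)/r^2$. Along $\partial Z=S^{n-1}(r)\times\{\pm L\}$ the conormal $\eta$ is axial, hence $\A^\Sigma(\eta,\eta)=0$; the contact angle obeys $\sin\th=\sqrt{1-r^2}=L$ (at a boundary point the unit normals of $Z$ and of $\partial\mathbb B^{n+1}$ make an angle with $|\cos|=r$); and $\A^{\partial\Omega}(\bar\nu,\bar\nu)=1$ for the unit sphere. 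Therefore $q=1/L$, so
\[
Q(u,v)=\int_Z\Big(\langle\nabla u,\nabla v\rangle-\tfrac{n-1}{r^2}\,uv\Big)\,d\mu-\frac1L\int_{\partial Z}uv\,ds.
\]

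Next I would expand in spherical harmonics on $S^{n-1}$, with eigenvalues $\lambda_k=k(k+n-2)$ and multiplicities $m_k$ (so $m_0=1$, $m_1=n$). Since $Q$ preserves this orthogonal decomposition and the sectors are mutually $Q$–orthogonal,
\[
\text{MI}(Q)=\sum_{k\ge 0}m_k\cdot\mathrm{ind}\big(Q^{(1)}_{\beta_k}\big),\qquad
\beta_k:=\frac{k(k+n-2)-(n-1)}{r^2}=\frac{(k-1)(k+n-1)}{r^2},
\]
where $\mathrm{ind}(Q^{(1)}_\beta)$ denotes the Morse index on $C^\infty([-L,L])$ of
\[
Q^{(1)}_\beta(\psi):=\int_{-L}^{L}\big((\psi')^2+\beta\,\psi^2\big)\,dz-\frac1L\big(\psi(L)^2+\psi(-L)^2\big).
\]
Here $\beta_0=-(n-1)/r^2<0$, $\beta_1=0$, the $\beta_k$ are positive and strictly increasing for $k\ge 1$, and the sum is finite since $\beta_k\to\infty$.

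The heart of the proof is the spectrum of $-\psi''+\beta\psi=\sigma\psi$ with the Robin conditions $\psi'(\pm L)=\pm\psi(\pm L)/L$. Splitting into even and odd parts and solving the resulting transcendental equations, one finds: there is exactly one eigenvalue below $\beta$, namely $\sigma=\beta-b_0^2/L^2$ with eigenfunction $\cosh(b_0 z/L)$, where $b_0>0$ is the unique root of $t\tanh t=1$; there is exactly one eigenvalue equal to $\beta$, with eigenfunction $\psi=z$ (the axial–translation Jacobi field); and all remaining eigenvalues exceed $\beta$, the smallest being $\sigma=\beta+a_1^2/L^2$ with eigenfunction $\cos(a_1 z/L)$, where $a_1\in(\pi/2,\pi)$ is the first root of $t\tan t=-1$. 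Consequently
\[
\mathrm{ind}\big(Q^{(1)}_\beta\big)=
\begin{cases}
0 & \text{if } \beta\ge b_0^2/L^2,\\
1 & \text{if } 0\le\beta< b_0^2/L^2,\\
2 & \text{if } -a_1^2/L^2\le\beta<0,\\
\ge 3 & \text{if } \beta<-a_1^2/L^2,
\end{cases}
\]
and $\mathrm{ind}(Q^{(1)}_\beta)\to\infty$ as $\beta\to-\infty$ (test against $\sin\big(\tfrac{j\pi(z+L)}{2L}\big)$, $j=1,2,\dots$, which annihilate the boundary term).

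From here the three statements follow by arithmetic. For \textbf{(1)}: the $k=0$ summand contributes $m_0\,\mathrm{ind}(Q^{(1)}_{\beta_0})\ge 2$ (as $\beta_0<0$) and the $k=1$ summand contributes $m_1\cdot 1=n$ (as $\beta_1=0$), so $\text{MI}(Q)\ge n+2$; equivalently, $\SPAN\{1,z,x_1,\dots,x_n\}$ — the constant, the height, and the first $n$ ambient coordinates restricted to $Z$ — is an explicit $(n+2)$–dimensional subspace on which $Q$ is negative definite. For \textbf{(2)}: as $r\to0$, $\beta_0=-(n-1)/r^2\to-\infty$, so $\text{MI}(Q)\ge\mathrm{ind}(Q^{(1)}_{\beta_0})\to\infty$; as $r\to1$, $L\to0$ so $b_0^2/L^2\to\infty$ while each $\beta_k\to(k-1)(k+n-1)$ stays bounded, hence for any $K$ we get $\mathrm{ind}(Q^{(1)}_{\beta_k})\ge 1$ for all $k\le K$ once $r$ is close enough to $1$, giving $\text{MI}(Q)\ge\sum_{k\le K}m_k\to\infty$. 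For \textbf{(3)}: by the table $\text{MI}(Q)=n+2$ precisely when $\mathrm{ind}(Q^{(1)}_{\beta_0})=2$ and $\mathrm{ind}(Q^{(1)}_{\beta_k})=0$ for all $k\ge 2$; the former holds iff $\beta_0\ge-a_1^2/L^2$, i.e. $r^2\ge\frac{n-1}{n-1+a_1^2}$, and the latter (binding at $k=2$) holds iff $\beta_2=(n+1)/r^2\ge b_0^2/L^2$, i.e. $r^2\le\frac{n+1}{n+1+b_0^2}$, so one may take $a^2=\frac{n-1}{n-1+a_1^2}$ and $b^2=\frac{n+1}{n+1+b_0^2}$. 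This interval is nonempty because $a<b\iff(n-1)b_0^2<(n+1)a_1^2$, which holds since $b_0<\pi/2<a_1$. The main obstacle is precisely the one–dimensional spectral analysis — uniqueness of $b_0$ via monotonicity of $t\mapsto t\tanh t$, the absence of an odd hyperbolic mode because $\tanh t<t$ for $t>0$, and pinning down the roots of $t\tan t=-1$ and $\tan t=t$ — together with the (routine) bookkeeping justifying the spherical–harmonic splitting and the finiteness of $\text{MI}(Q)$; once the table for $\mathrm{ind}(Q^{(1)}_\beta)$ is established, (1)–(3) are elementary.
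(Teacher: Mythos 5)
Your proposal is correct and follows essentially the same route as the paper's proof of Proposition \ref{prop-cyl}: separate variables into spherical harmonics on $S^{n-1}$ times a one-dimensional Robin Sturm--Liouville problem on $[-T,T]$, identify the roots of $\coth t=t$, $\tan t=t$, $\cot t=-t$, and count negative eigenvalues. Your explicit table for $\mathrm{ind}(Q^{(1)}_\beta)$ and the closed-form endpoints $a^2=\tfrac{n-1}{\,n-1+a_1^2}$, $b^2=\tfrac{n+1}{\,n+1+b_0^2}$ (with $b_0=T_0$, $a_1=T_1$ in the paper's notation) organize the bookkeeping a bit more systematically than the paper's case-by-case discussion, but the underlying computation is identical.
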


The critical catenoid  can be considered as a capillary surface with zero mean curvature and right intersecting angle.  

\begin{theorem}
	\label{precise2}
	Let $\Sigma\subset \mathbb{B}^3$ is the critical catenoid. Then its type-I, type-II, and type I+II Morse indices are all equal to 3. 
\end{theorem}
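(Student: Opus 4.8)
The plan is to combine the known value of the unconstrained index with the abstract criteria of Theorems~\ref{app1} and~\ref{app3} and the estimate of Theorem~\ref{thm1.6}. Since $\Omega=\mathbb{B}^3$ is flat we have $p=|\A^\Sigma|^2$, and the critical catenoid is a free boundary surface, so $\theta=\pi/2$, $\cot\theta=0$, and $q=\A^{\partial\mathbb{B}^3}(\bar\nu,\bar\nu)=1$; thus $Q(u,v)=\int_\Sigma(\langle\nabla u,\nabla v\rangle-|\A^\Sigma|^2uv)\,d\mu-\int_{\partial\Sigma}uv\,ds$ is the usual free boundary index form. Parametrize $\Sigma$ by $X(t,\vartheta)=(\cosh t\cos\vartheta,\,\cosh t\sin\vartheta,\,t)$ with $(t,\vartheta)\in[-T,T]\times\mathbb{S}^1$, where $T$ is the unique positive root of $T\tanh T=1$ (exactly the condition $\partial\Sigma\perp\partial\mathbb{B}^3$); then the metric is $\cosh^2 t\,(dt^2+d\vartheta^2)$, $|\A^\Sigma|^2=2\cosh^{-4}t$, $\Delta_\Sigma=\cosh^{-2}t\,(\partial_t^2+\partial_\vartheta^2)$, and the outward unit conormal along $\partial\Sigma$ is $\pm\cosh^{-1}T\,\partial_t$. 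I would first record that $\text{MI}(Q)=4$, citing \cite{SSTZ17morse,tran16index} for the (hard) upper bound; the lower bound $\text{MI}(Q)\ge4$ is immediate from the $Q$-negative subspace $\operatorname{span}\{1,\langle X,e_1\rangle,\langle X,e_2\rangle,\langle X,e_3\rangle\}$. By the dichotomies of Theorems~\ref{app1} and~\ref{app3} the type-I and type-II indices then lie in $\{3,4\}$, and the type-I+II index is $\le$ the type-I index, being the index of $Q$ on a subspace of the type-I constraint space.

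For the type-I index I would verify the hypothesis of Theorem~\ref{app1} with the explicit rotationally symmetric test function
\[
u=-\tfrac14\big(\langle X,e_1\rangle^2+\langle X,e_2\rangle^2\big)+a\,\langle X,\nu\rangle=-\tfrac14\cosh^2 t+a\,(1-t\tanh t),
\]
$\nu$ the unit normal. Direct computation gives $(\Delta_\Sigma+|\A^\Sigma|^2)(\cosh^2 t)=4$ and $(\Delta_\Sigma+|\A^\Sigma|^2)\langle X,\nu\rangle=0$, so $(\Delta_\Sigma+|\A^\Sigma|^2)u=-1$ for every $a$; imposing the Robin condition $\nabla_\eta u=u$ on $\partial\Sigma$ forces $a=\tfrac{\cosh T}{4T}(\cosh^2 T-2\sinh T)=\tfrac14\sinh T(\sinh T-1)^2$ (using $\cosh^2 T=1+\sinh^2 T$ and $\cosh T=T\sinh T$). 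Since $1<T<1.2$ one has $0<\sinh T-1<1$ and $\sinh T(\sinh T-1)^2<1$, so $0<a<\tfrac14$; hence $u\le-\tfrac14+a<0$ pointwise, so $\int_\Sigma u<0$. Theorem~\ref{app1} then gives that the type-I index equals $\text{MI}(Q)-1=3$.

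For the type-II index I would apply Theorem~\ref{app3} with the support-function test function $u=-\tfrac{\cosh T}{T}\langle X,\nu\rangle=-\tfrac{\cosh T}{T}(1-t\tanh t)$. Then $(\Delta_\Sigma+|\A^\Sigma|^2)u=0$, and since $\nabla_\eta\langle X,\nu\rangle=-T\cosh^{-1}T$ while $\langle X,\nu\rangle=0$ on $\partial\Sigma$, the normalization is chosen exactly so that $\nabla_\eta u-u=1$ there; moreover $1-t\tanh t\ge0$ on $[-T,T]$ (since $t\tanh t$ is increasing and equals $1$ at $t=T$), so $u\le0$ and $\int_\Sigma u\le0$. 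Hence the type-II index is also $\text{MI}(Q)-1=3$, and consequently the type-I+II index is $\le3$.

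For the matching lower bound on the type-I+II index I would observe that the coordinate functions $\langle X,e_i\rangle$ all have zero average and zero boundary average (by the $\vartheta$- and $t$-parities), and that integration by parts using $\Delta_\Sigma\langle X,e_i\rangle=0$ together with the free boundary identity $\nabla_\eta\langle X,e_i\rangle=\langle X,e_i\rangle$ on $\partial\Sigma$ gives $Q(\langle X,e_i\rangle,\langle X,e_j\rangle)=-\int_\Sigma|\A^\Sigma|^2\langle X,e_i\rangle\langle X,e_j\rangle$; this $3\times3$ matrix is negative definite because the $\langle X,e_i\rangle$ are linearly independent and $|\A^\Sigma|^2>0$. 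Equivalently, for the critical catenoid the matrix $\Upsilon$ of Theorem~\ref{thm1.6} reduces to $4\int_\Sigma|\A^\Sigma|^2\langle X,e_i\rangle\langle X,e_j\rangle$, which is positive definite, so $\ell=3$, while $\Sigma$ is manifestly not $|\circA|^2$-scale equivalent to a hyper-planar domain, so Theorem~\ref{thm1.6}(3) applies. Either way, $Q$ is negative definite on $\operatorname{span}\{\langle X,e_1\rangle,\langle X,e_2\rangle,\langle X,e_3\rangle\}$, a subspace of the type-I+II constraint space, so the type-I+II index is $\ge3$; hence it equals $3$. The only genuinely substantive points are spotting the two test functions — $-\tfrac14(\langle X,e_1\rangle^2+\langle X,e_2\rangle^2)$ as a particular solution of $(\Delta_\Sigma+|\A^\Sigma|^2)u=-1$, and the support function $1-t\tanh t$ handling both the type-II Robin problem and the correction term for type-I — and the elementary estimate $0<a<\tfrac14$; I expect that last estimate to be the only place where a little care is needed, the rest being assembly of the (in)equalities and the one citation $\text{MI}(Q)=4$.
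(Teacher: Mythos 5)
Your proof has the same overall architecture as the paper's (establish $\text{MI}(Q)=4$ by citation, produce explicit $Q$-critical test functions to pull the type-I and type-II indices down to $3$ via Theorems~\ref{app1} and~\ref{app3}, and get the matching lower bound on the type-I+II index from the coordinate functions / Corollary~\ref{weakMIfbms}), so the route is essentially identical. There is, however, a genuine error in the type-I calculation that happens not to damage the conclusion.

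The parametrization $X(t,\vartheta)=(\cosh t\cos\vartheta,\cosh t\sin\vartheta,t)$ with $T\tanh T=1$ does \emph{not} lie in $\mathbb{B}^3$: at $t=T$ one has $|X|^2=\cosh^2T+T^2=(T\cosh T)^2$, so this catenoid meets the sphere of radius $R=T\cosh T\approx 2.17$ orthogonally, not the unit sphere. The paper's parametrization (\ref{cricat}) carries the scale factor $c=1/(T\cosh T)$ precisely to correct this. Once you work on the sphere of radius $R$, the boundary coefficient is $q=\A^{\partial\Omega}(\bar\nu,\bar\nu)=1/R$, \emph{not} $q=1$, so the Robin condition in your coordinates is $\tfrac1{\cosh T}\partial_t u(T)=\tfrac1{T\cosh T}u(T)$, i.e.\ $\partial_t u(T)=u(T)/T$ — not $\partial_t u(T)=\cosh T\cdot u(T)$ as you impose. (Notice that after the correction this is literally the same scalar condition as in the paper's normalization $T\partial_t u(T)=u(T)$; the scale washes out of the Robin equation, as it must.) Re-solving with the correct condition gives
\[
a=\frac1{T}\Big(\!-\tfrac12\cosh T\sinh T+\tfrac{\cosh^2T}{4T}\Big)=-\tfrac14\sinh^2T\;<\;0,
\]
not $a=\tfrac14\sinh T(\sinh T-1)^2>0$, and your estimate ``$0<a<\tfrac14$'' is an artifact of the wrong boundary condition. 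Fortunately the conclusion is robust: with $a<0$ both terms $-\tfrac14\cosh^2t$ and $a(1-t\tanh t)$ are nonpositive (the second because $1-t\tanh t\ge0$ on $[-T,T]$), so $u<0$ pointwise and $\int_\Sigma u<0$ still holds; Theorem~\ref{app1} then gives type-I index $=3$ as claimed. Your type-II argument escapes the same bug only by luck: since the candidate $u$ vanishes on $\partial\Sigma$, the term $qu$ drops out of the Robin condition and the normalization is insensitive to the value of $q$. The lower-bound argument for type I+II (either the direct computation $Q(\langle X,e_i\rangle,\langle X,e_j\rangle)=-\int_\Sigma|\A|^2\langle X,e_i\rangle\langle X,e_j\rangle$, or the invocation of Theorem~\ref{thm1.6}) is sound and matches the role played by Corollary~\ref{weakMIfbms} in the paper. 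In short: the structure and test functions are right, but the parametrization should be rescaled by $c=1/(T\cosh T)$ (or equivalently $q$ replaced by $1/(T\cosh T)$), and the numerical determination of $a$ corrected accordingly; the paper, incidentally, also records $b=-a\cosh T\sinh T$ where the algebra gives $b=-a\sinh^2T$, an inessential slip of the same flavor.
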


The organization of the paper is as follows. The next section will collect some preliminaries and fix our notations. In Section 3, we study the set-up of closed CMC hypersurfaces, emphasizing ideas when the calculation is relatively easy and proving Corollary \ref{precise1}. The next two sections are devoted to type I and type II partitioning problems, respectively, and prove Theorems \ref{app1} and \ref{app3}. Then, we'll look into the Euclidean case in details in Section 6, where there are proofs of Theorems \ref{thm1.6} and \ref{precise3}. The last Section is about free boundary minimal hypersurfaces and obtain Corollary \ref{weakMIfbms} and Theorem \ref{precise2}. 
\section{Preliminaries}
\label{prelim}
First, we record our notations, conventions, and collect useful results.

\subsection{Variational Formulae on Hypersurfaces} 
Let $\Omega^{n+1}$ be a Riemannian manifold with or without boundaries and $X:\Sigma^n \mapsto \Omega^{n+1}$, an isometric immersion of an orientable $n$-dimensional compact manifold $\Sigma$. We also identify $\Sigma$ with its image $X(\Sigma)$. When $\Omega$ has boundaries and $\partial\Sigma=\Sigma\cap \partial \Omega$ we assume $\Omega$ is diffemorphic to an Euclidean ball. Then $\Sigma\subset \Omega$ is called a partitioning of $\Omega$. We now denote  by $\bar \n$, $\bar \Delta$ and $\bar \n^2$ (correspondingly $\n$, $\Delta$ and $\n^2$) the gradient, the Laplacian and the Hessian on $\Omega$ (on $\Sigma$) respectively. 

Let's consider a differentiable family of immersions:
\begin{align*}
X(t,\cdot): &(-\epsilon, \epsilon)\times \Sigma \rightarrow \Omega,\\
X(t, \text{int}\Sigma) &\subset \text{int} \Omega,\\
X(t, \partial \Sigma) &\subset \partial \Omega,\\
X(0,\cdot) &=X. 
\end{align*}

Next, we recall various functionals. First, the area functional is given by 
\[A(t) = \int_\Sigma d\mu(t).\]
Here  $d\mu(t)$ is  the area element of $\Sigma$ with respect to the pullback metric via $X(t,\cdot)$.
The volume functional is given by
\[V(t) =\int_{[0,t]\times \Sigma} X^{\ast} d\mu_{\Omega},\]
where   $d\mu_{\Omega}$ is  the volume element of $\Omega$.
The wetting area functional is given by
\[W(t) = \int_{[0,t]\times \partial \Sigma} X^{\ast} d\mu_{\partial \Omega},\]
 where $d\mu_{\partial \Omega}$ the volume element of $\partial\Omega$.
Also, it is physically relevant to consider the followng energy functional, for a real number $\th\in (0,\pi)$,
\[E(t) = A(t)-\cos(\th)W(t).\]

\begin{wrapfigure}{R}{0.4\textwidth}
	\begin{center}
		\includegraphics[width=0.38\textwidth]{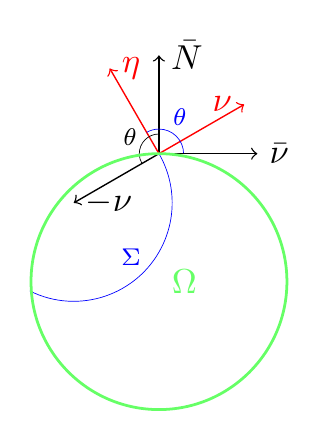}
	\end{center}
\end{wrapfigure}
For variational computation, we fix some convention. For simplicity, let's $d\mu$ and $ds$ denote the (induced) volume element of $\Sigma$ and $\partial\Sigma$ respectively. For convenience and when the context is clear, we'll omit it when writing integrals. \\

Let $\nu$ be a choice of a unit normal vector $\Sigma\subset \Omega$. $\eta$ is the exterior normal vector of $\partial\Sigma\subset \Sigma$. $\bar N$ is the outward pointing unit normal vector of $\partial\Omega\subset \Omega.$ Finally let $\bar \nu$ be the unit normal to $\partial \Sigma$ in $\partial\Omega$ such that the bases $\{\nu, \eta\}$ and $\{\bar \nu, \bar N\}$ have the same orientation in the normal bundle of $\partial \Sigma\subset \Omega$. So along $ \partial \Sigma$, the angle between $-\nu$ and $\bar N$ is equal to the one between $\eta$ and $\bar \nu$. \\

Also, the second fundamental form, for vector fields $X, Y$, is defined as, 
\[\A(X, Y)=-\left\langle{\bar{\nabla}_{X}Y, \nu}\right\rangle.\]
Then $|\A|$ denotes its norm and the mean curvature $H$ is defined to be its trace. Furthermore, let $\circA$ the traceless second fundamental form. That is, $|\circA|^2:=|\A|^2-\frac{H^2}n$.\\

The first variation of these functionals is well-known and collected below.
\begin{align*}
X'(0) &= Y,\\
A'(0) &=\int_{\Sigma}H \left\langle{\nu,Y}\right\rangle d\mu+\int_{\partial\Sigma}\left\langle{\eta,Y}\right\rangle ds,\\
V'(0) &= \int_{\Sigma}\left\langle{\nu,Y}\right\rangle d\mu,\\
W'(0) &= \int_{\partial \Sigma}\left\langle{\bar{\nu},Y}\right\rangle ds\\
E'(0) &=\int_{\Sigma}H \left\langle{\nu,Y}\right\rangle d\mu+\int_{\partial\Sigma}\left\langle{\eta-\cos(\th)\bar{\nu},Y}\right\rangle ds.
\end{align*}

For critical points of problems we consider, $\Sigma$ always intersects $\p\Omega$ at a constant angle. That is, the angle between $-\nu$ and $\bar N$ or equivalently between $\eta$ and $\bar \nu$ is everywhere equal to $\th$. Namely, 
\begin{eqnarray} 
&&\eta=\sin \th\, \bar N+\cos \th\, \bar \nu,  \label{mu0}\\&&\nu=-\cos \th\, \bar N+\sin \th\, \bar \nu.\nonumber
\end{eqnarray}
Equivalently,
\begin{eqnarray} 
&&\bar{N}=\sin \th \eta-\cos \th\, \nu,  \label{mu1}\\&&\bar\nu=\cos \th\, \eta+\sin \th\, \nu.\nonumber
\end{eqnarray}

\begin{definition}
	An orientable immersed smooth hypersurface $\Sigma^n\subset \Omega^{n+1}$ is called capillary if it has constant mean curvature and constant intersecting angle. 
\end{definition}
\begin{remark}\label{remak2} If  $\p \Omega$ is umbilical in $\Omega$ then $\eta$ is a principal direction of $\p \Sigma$ in $\Sigma$, 
	and 
	\begin{eqnarray*}
		\bar \n_\eta \nu= \A (\eta, \eta)\eta.
	\end{eqnarray*}
\end{remark}
 
Next, we collect useful calculation when $\Omega$ is Euclidean. The next lemmas are well-known and come from straightforward calculation. 
\begin{lemma} \label{lem3.1} Let $X: \Sigma\to \mathbb{R}^{n+1}$ be an isometric immersion. The following identities hold:
	\begin{eqnarray*}
		&&\Delta X=-H\nu,
		\\&&\Delta \frac12|X|^2=n-H\<X, \nu\>,
		\\&&\Delta \nu=\n H-|\A|^2\nu,
		\\&&\Delta\<X, \nu\>=\<X, \n H\>+ H-|\A|^2\<X, \nu\>,
		\\&&\<\bar\nabla X,\bar\nabla\nu\rangle= H,
	\end{eqnarray*}
\end{lemma}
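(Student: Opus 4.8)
The statement to prove is Lemma \ref{lem3.1}, which collects five standard identities for an isometric immersion $X\colon\Sigma\to\mathbb{R}^{n+1}$. Here is how I would organize the proof.

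\textbf{Strategy.} All five identities are consequences of the defining relations $\bar\nabla_i X = e_i$ (i.e.\ the position vector field has ambient derivative equal to the identity), the Gauss formula $\bar\nabla_i e_j = \nabla_i e_j - \A_{ij}\nu$, and the Weingarten relation $\bar\nabla_i \nu = \A_{ij}e_j$ (with the sign convention $\A(X,Y)=-\langle\bar\nabla_X Y,\nu\rangle$ fixed in the excerpt, so that $H=\operatorname{tr}\A$ and $\Delta X = \operatorname{tr}\nabla^2 X$). I would set up a local orthonormal frame $\{e_i\}_{i=1}^n$ on $\Sigma$, geodesic at the point of evaluation, so that $\nabla_i e_j = 0$ there, and compute each Laplacian as $\Delta f = \sum_i e_i(e_i f)$ at that point.

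\textbf{Key steps, in order.} (1) For $\Delta X$: differentiate $\bar\nabla_i X = e_i$ again to get $\bar\nabla_i\bar\nabla_i X = \bar\nabla_i e_i = -\A_{ii}\nu$ at the base point; summing over $i$ gives $\Delta X = -H\nu$. (2) For $\Delta\frac12|X|^2$: write $\nabla_i\frac12|X|^2 = \langle X, e_i\rangle = X^t_i$, then $\Delta\frac12|X|^2 = \sum_i e_i\langle X,e_i\rangle = \sum_i\big(\langle e_i,e_i\rangle + \langle X,\bar\nabla_i e_i\rangle\big) = n - H\langle X,\nu\rangle$, using step (1)'s computation of $\bar\nabla_i e_i$. (3) For $\Delta\nu$: from $\bar\nabla_i\nu = \A_{ij}e_j$ (Weingarten), differentiate once more and take the tangential and normal parts separately; the normal part produces $-|\A|^2\nu$ and the tangential part, via the Codazzi equation $\nabla_i\A_{jj} = \nabla_j\A_{ij}$ (valid since the ambient curvature vanishes), collapses to $\nabla H$. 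This is the identity $\Delta\nu = \nabla H - |\A|^2\nu$. (4) For $\Delta\langle X,\nu\rangle$: apply the product rule for the Laplacian, $\Delta\langle X,\nu\rangle = \langle\Delta X,\nu\rangle + 2\langle\bar\nabla X,\bar\nabla\nu\rangle + \langle X,\Delta\nu\rangle$, and substitute steps (1) and (3) together with the pairing identity $\langle\bar\nabla X,\bar\nabla\nu\rangle = \sum_i\langle e_i,\A_{ij}e_j\rangle = H$ (which is the fifth identity, so I would prove it first or inline). After simplification — note $\langle\Delta X,\nu\rangle = -H$ and $2H$ from the cross term combine with $\langle X,\nabla H\rangle - |\A|^2\langle X,\nu\rangle$ — one obtains $\langle X,\nabla H\rangle + H - |\A|^2\langle X,\nu\rangle$. (5) The last identity $\langle\bar\nabla X,\bar\nabla\nu\rangle = H$ is the frame computation $\sum_i\langle\bar\nabla_i X,\bar\nabla_i\nu\rangle = \sum_i\langle e_i,\A_{ij}e_j\rangle = \sum_i\A_{ii} = H$.

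\textbf{Main obstacle.} There is no serious obstacle — the lemma is labelled "well-known" and "straightforward" in the excerpt. The only point requiring mild care is step (3): one must correctly separate tangential and normal components of $\bar\nabla_i(\A_{ij}e_j)$ and invoke Codazzi (in the flat ambient space) to recognize the tangential part as $\nabla H$ rather than leaving it as $\sum_{i,j}(\nabla_i\A_{ij})e_j$; getting the sign right in the normal part, $\sum_{i,j}\A_{ij}(-\A_{ij}\nu) = -|\A|^2\nu$, depends on consistent use of the chosen sign convention for $\A$. The other subtlety is simply bookkeeping in step (4), making sure the factor of $2$ in the product rule and the value $H$ of the mixed term are tracked so that the $-H + 2H = H$ cancellation appears correctly.
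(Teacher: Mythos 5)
Your proof is correct. The paper itself gives no argument for Lemma \ref{lem3.1} (it is introduced with the remark that these lemmas ``are well-known and come from straightforward calculation''), so there is no paper proof to compare against; what you have written is the standard derivation via a frame geodesic at a point, the Gauss formula $\bar\nabla_i e_j = \nabla_i e_j - \A_{ij}\nu$, the Weingarten relation $\bar\nabla_i\nu = \A_{ij}e_j$, and Codazzi in the flat ambient for the tangential part of $\Delta\nu$, with the signs consistent with the paper's convention $\A(X,Y)=-\langle\bar\nabla_X Y,\nu\rangle$ and $H=\operatorname{tr}\A$. The bookkeeping in step (4), where $-H$ from $\langle\Delta X,\nu\rangle$ and $2H$ from the cross term combine to $H$, is exactly right.
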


\begin{corollary} \label{lem3.1} Let $X: \Sigma\to \mathbb{R}^{n+1}$ be an isometric immersion with constant mean curvature. Denote $J=\Delta+|\A|^2$. Then the following identities hold:
	\begin{eqnarray*}
		\\&&J \nu=0,\label{eq-nu}
		\\&&J\<X, \nu\>= H, \label{eq-xnu}
		\end{eqnarray*}
\end{corollary}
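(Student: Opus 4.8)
\textbf{Proof proposal for Corollary \ref{lem3.1}.}

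The plan is to derive both identities as direct consequences of the five identities in Lemma \ref{lem3.1} (the preceding one, also labeled \ref{lem3.1}) together with the hypothesis that $H$ is constant, so that $\nabla H = 0$. First I would treat $J\nu = 0$: by definition $J\nu = \Delta\nu + |\A|^2\nu$, and the third identity of Lemma \ref{lem3.1} gives $\Delta\nu = \nabla H - |\A|^2\nu$. Since $\Sigma$ has constant mean curvature, $\nabla H = 0$, so $\Delta\nu = -|\A|^2\nu$, whence $J\nu = \Delta\nu + |\A|^2\nu = 0$. One should note that $\Delta\nu$ here means the (rough/componentwise) Laplacian of the $\R^{n+1}$-valued map $\nu$, consistent with the convention in Lemma \ref{lem3.1}.

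For the second identity, $J\langle X,\nu\rangle = \Delta\langle X,\nu\rangle + |\A|^2\langle X,\nu\rangle$. The fourth identity of Lemma \ref{lem3.1} gives $\Delta\langle X,\nu\rangle = \langle X,\nabla H\rangle + H - |\A|^2\langle X,\nu\rangle$. Again using $\nabla H = 0$, the first term drops out, leaving $\Delta\langle X,\nu\rangle = H - |\A|^2\langle X,\nu\rangle$. Adding $|\A|^2\langle X,\nu\rangle$ yields $J\langle X,\nu\rangle = H$, as claimed.

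There is essentially no obstacle here: the corollary is a two-line specialization obtained by setting $\nabla H = 0$ in two of the formulas already recorded. The only point requiring a little care is bookkeeping of conventions — making sure the sign convention for $\A$ and $H$ (the paper uses $\A(X,Y) = -\langle\bar\nabla_X Y,\nu\rangle$ and $H = \operatorname{tr}\A$) is applied consistently, and that $\Delta$ acting on the $\R^{n+1}$-valued objects $\nu$ and $X$ is interpreted as the componentwise Laplacian. Given Lemma \ref{lem3.1}, both statements follow immediately.
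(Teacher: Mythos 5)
Your proof is correct and matches the (implicit) intent of the paper: the corollary is stated without proof precisely because it is the immediate specialization of the third and fourth identities of Lemma \ref{lem3.1} under $\nabla H = 0$, exactly as you argue. Nothing further is needed.
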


\begin{prop}\label{prop3.3} Let $X: \Sigma\to\mathbb{B}^{n+1}$ be a capillary hypersurface with intersecting angle $\th\in (0, \pi)$. Then along $\p \Sigma$, 
	\begin{align*}
	\bar \n_\eta( X+\cos \th\,\nu) =& q( X+\cos \th\, \nu) \label{bdry1}
	\\\bar \n_\eta Y &=q Y,
	\end{align*}
	where  
	\begin{align*}
	Y &= \<X, \nu\>X-\frac12(|X|^2+1) \nu\\
	q &=\frac{1}{\sin \th}+\cot(\theta) \A(\eta,\eta).
	\end{align*}
\end{prop}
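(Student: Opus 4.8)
The plan is to establish both boundary identities by a direct computation along $\partial\Sigma$, leveraging the umbilicity of $\partial\mathbb{B}^{n+1}$ in $\mathbb{B}^{n+1}$ so that Remark \ref{remak2} applies with $\bar\nabla_\eta\nu=\A(\eta,\eta)\eta$. First I would compute $\bar\nabla_\eta(X+\cos\th\,\nu)$. Since $X:\Sigma\to\mathbb{B}^{n+1}$ and along $\partial\Sigma$ the position vector $X$ satisfies $X=\bar N$ (the outward unit normal to the unit sphere), we can use the decomposition (\ref{mu0})--(\ref{mu1}) expressing $\bar N$ and $\bar\nu$ in terms of $\eta$ and $\nu$. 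The key point is that $\bar\nabla_\eta X=\eta$ (differentiating the position vector in the ambient Euclidean space), and $\bar\nabla_\eta\nu=\A(\eta,\eta)\eta$. Combining these, $\bar\nabla_\eta(X+\cos\th\,\nu)=(1+\cos\th\,\A(\eta,\eta))\eta$; then rewriting $X+\cos\th\,\nu$ along $\partial\Sigma$ using $X=\bar N=\sin\th\,\eta-\cos\th\,\nu$, one gets $X+\cos\th\,\nu=\sin\th\,\eta$, hence the target vector is a positive multiple of $\eta$. Dividing, $\bar\nabla_\eta(X+\cos\th\,\nu)=\frac{1+\cos\th\,\A(\eta,\eta)}{\sin\th}(X+\cos\th\,\nu)=q(X+\cos\th\,\nu)$, which is exactly the claimed $q=\frac{1}{\sin\th}+\cot\th\,\A(\eta,\eta)$.

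For the second identity, I would differentiate $Y=\<X,\nu\>X-\frac12(|X|^2+1)\nu$ along $\eta$. Using $\bar\nabla_\eta X=\eta$, $\bar\nabla_\eta\nu=\A(\eta,\eta)\eta$, and the scalar derivatives $\eta(\<X,\nu\>)=\<\bar\nabla_\eta X,\nu\>+\<X,\bar\nabla_\eta\nu\>=\<\eta,\nu\>+\A(\eta,\eta)\<X,\eta\>$ and $\eta(|X|^2)=2\<X,\eta\>$, one expands $\bar\nabla_\eta Y$ into a combination of $X$, $\nu$, and $\eta$. Along $\partial\Sigma$ we have $X=\bar N$, so $\<X,\nu\>=\<\bar N,\nu\>=-\cos\th$, $\<X,\eta\>=\<\bar N,\eta\>=\sin\th$, and $|X|^2=1$. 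Substituting these constants collapses the expression: the $X$- and $\nu$-components reassemble into $q\,Y$ and the $\eta$-component should cancel. I expect the $\eta$-terms to vanish precisely because of the relation $\<X,\nu\>\<X,\eta\>$-type combinations matching $\frac12(|X|^2+1)\A(\eta,\eta)$ — this is where the arithmetic has to be done carefully, and it is the main place a sign or factor could go wrong.

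The main obstacle is the bookkeeping in the second identity: one must track the $\eta$-component carefully and verify it is identically zero using only the boundary values $|X|=1$, $\<X,\nu\>=-\cos\th$, $\<X,\eta\>=\sin\th$, rather than any interior information; the umbilicity of the sphere (so that $\bar\nabla_\eta\nu$ has no $\nu$-component and no tangential component other than along $\eta$) is essential here. A minor point worth stating explicitly is why $\eta$ is a principal direction of $\partial\Sigma$ in $\Sigma$ — this follows from Remark \ref{remak2} since $\partial\mathbb{B}^{n+1}$ is totally umbilical in $\mathbb{B}^{n+1}$ — and this is what licenses using $\bar\nabla_\eta\nu=\A(\eta,\eta)\eta$ rather than a more complicated expression. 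Once both computations are assembled, the proposition follows immediately with no further analytic input.
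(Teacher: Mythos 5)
Your proposal is correct and follows essentially the same route as the paper: both identities are verified by differentiating along $\eta$ using $\bar\nabla_\eta X=\eta$ and $\bar\nabla_\eta\nu=\A(\eta,\eta)\eta$ (licensed by the umbilicity of $\partial\mathbb{B}^{n+1}$), and then substituting the boundary relations $|X|=1$, $\<X,\nu\>=-\cos\th$, $\<X,\eta\>=\sin\th$ together with $X=\sin\th\,\eta-\cos\th\,\nu$. One small clarification on your phrasing for the second identity: the raw expansion of $\bar\nabla_\eta Y$ does carry a nonzero $\eta$-component, which does not simply cancel but must first be rewritten via $\eta=\frac{1}{\sin\th}(X+\cos\th\,\nu)$ before the whole expression collapses to $qY$ — this is precisely the bookkeeping you flagged as delicate.
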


\begin{proof} On the boundary, $X=\bar{N}$ and $X+\cos\theta\, \nu=\sin\theta\, \eta$ by (\ref{mu1}). Using Remark \ref{remak2},
	\begin{align*}
	\bar \nabla_\eta (X+\cos \th\,\nu)
	&= \eta+\cos \th\, \A(\eta, \eta)\eta=q \sin \theta\,\eta.
	\end{align*}
	
	For the second identity, we observe, on the boundary, $\bar \nabla_\eta X=\eta$, $|X|=1$ and
	\begin{align*}
	\bar \nabla_\eta \nu &=\A(\eta,\eta)\eta,\\
	\<X, \nu\>X-\frac12(|X|^2+1) \nu &=-\cos\theta\, X-\nu=-(\cos\theta\,\sin\theta\,\eta+\sin^2\theta\,\nu).
	\end{align*}  
	Then, 
	\begin{align*}
	& \bar \nabla_\eta[\<X, \nu\>X -\frac12(|X|^2+1) \nu]\\
	&=(\langle\bar \nabla_\eta X, \nu\rangle + \langle\bar \nabla_\eta \nu, X\rangle )X+\langle X,\nu\rangle\bar \nabla_\eta X-\<\bar \nabla_\eta X, X\>\nu-\frac12(|X|^2+1)\bar \nabla_\eta \nu\\
	&= \langle\bar \nabla_\eta \nu, X\rangle X+\langle X,\nu\rangle\eta -\<\eta, X\>\nu-\bar \nabla_\eta \nu\\
	&= \A(\eta,\eta)( \sin\theta\, X-\eta)-\cos\theta\,\eta -\sin\theta\,\nu
	\\
	&=-\A(\eta,\eta)\cot\theta(\cos\theta\,\sin\theta\,\eta+\sin^2\theta\,\nu)-\frac1{\sin\theta}(\cos\theta\,\sin\theta\,\eta+\sin^2\theta\,\nu)
	\\
	&= q(\cos\theta\,\sin\theta\,\eta+\sin^2\theta\,\nu)\\
	&=q(\nu+\cos\theta\ X).
	\end{align*}
	
	The result follows.
\end{proof}
 
\begin{proposition}
	\label{boundarysum}
 Let $X: \Sigma\to\mathbb{B}^{n+1}$ be a capillary hypersurface with intersecting angle $\th\in (0, \pi)$, then
	\[\int_{\p\Sigma}n \eta+ H\bar\nu =0. \]
\end{proposition}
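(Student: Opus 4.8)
The plan is to turn the two boundary integrals into a single interior integral over $\Sigma$ by applying the divergence theorem twice — once in the ambient ball, once on the boundary sphere — and then watch the interior integrals cancel. First I would record the easy half. By Lemma \ref{lem3.1}, $\Delta X=-H\nu$, and since $\Sigma$ is capillary, $H$ is constant; integrating over $\Sigma$ componentwise and using $\bar\n_\eta X=\eta$ along $\p\Sigma$ gives $\int_{\p\Sigma}\eta\,ds=-H\int_\Sigma\nu\,d\mu$. Hence the Proposition reduces to the single vector identity $\int_{\p\Sigma}\bar\nu\,ds=n\int_\Sigma\nu\,d\mu$, after which $\int_{\p\Sigma}(n\eta+H\bar\nu)=-nH\int_\Sigma\nu+nH\int_\Sigma\nu=0$; the case $H=0$ then needs nothing further.

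To prove $\int_{\p\Sigma}\bar\nu\,ds=n\int_\Sigma\nu\,d\mu$ I would first assume $\Sigma$ embedded, so that $\p\Sigma$ separates $\p\mathbb{B}^{n+1}=\mathbb{S}^n$ into two pieces; let $T$ be the wetted one and $\hat\Omega\subset\mathbb{B}^{n+1}$ the enclosed region, so $\p\hat\Omega=\Sigma\cup T$ and $\bar N=X$ along $T$. Choosing $\nu$ to be the outward normal of $\hat\Omega$ along $\Sigma$, the conventions (\ref{mu0})--(\ref{mu1}) make $\bar\nu$ the outward conormal of $\p T$ in $\mathbb{S}^n$. Now apply the divergence theorem to a constant vector $e\in\R^{n+1}$ on $\hat\Omega$: since $\div e=0$ and $\langle e,X\rangle=\langle e,\bar N\rangle$ on $T$, this gives $\int_\Sigma\nu\,d\mu=-\int_T X\,d\sigma$. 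Separately, writing $e^{t}=e-\langle e,X\rangle X$ for the projection of $e$ onto $T\mathbb{S}^n$, one computes $\div_{\mathbb{S}^n}X=n$ and $\bar\n e=0$, hence $\div_{\mathbb{S}^n}(e^{t})=-n\langle e,X\rangle$; applying the divergence theorem to $e^{t}$ on $T\subset\mathbb{S}^n$ and using $\langle e^{t},\bar\nu\rangle=\langle e,\bar\nu\rangle$ (because $\bar\nu\perp X$) yields $\int_{\p\Sigma}\langle e,\bar\nu\rangle\,ds=-n\int_T\langle e,X\rangle\,d\sigma$. Combining the two displays, $\int_{\p\Sigma}\langle e,\bar\nu\rangle\,ds=n\int_\Sigma\langle e,\nu\rangle\,d\mu$ for every $e$, which is exactly the identity needed. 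For a merely immersed proper $\Sigma$ I would run the same two computations at the level of the chains that define $V$ and $W$ in Section \ref{prelim}. (One is tempted to avoid the regions entirely by integrating the boundary identities of Proposition \ref{prop3.3} against $\Delta$ of the corresponding vector fields, but those integrations reintroduce $q$ and $|\A|^2$ and do not close, so the two divergence theorems seem to be the honest route.)

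The hard part is purely orientation bookkeeping: I must make sure that, with the sign conventions (\ref{mu0})--(\ref{mu1}), the vector $\bar\nu$ is the \emph{outward} conormal of $T$ rather than the inward one, and that the chosen $\nu$ is outward for the very region $\hat\Omega$ that $T$ bounds, so that $\int_{\p\Sigma}\bar\nu$ and $\int_\Sigma\nu$ end up with matching signs instead of opposite signs (in which case the combination would be $-2nH\int_\Sigma\nu$ rather than $0$). I would pin this down once on a model example — an equatorial disc in $\mathbb{B}^3$, where $\theta=\pi/2$, $\nu=\bar\nu$, and $\eta=\bar N=X$ — and then invoke connectedness of the capillary data to get it in general. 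Everything else in the argument is routine.
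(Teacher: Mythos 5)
Your proof is correct, but it takes a genuinely different route from the paper's. Your first step ($\int_{\partial\Sigma}\eta = -H\int_\Sigma\nu$ via $\Delta X=-H\nu$) coincides with the paper's. For the second identity, $\int_{\partial\Sigma}\bar\nu = n\int_\Sigma\nu$, the paper does \emph{not} leave $\Sigma$: it introduces the \emph{tangent} vector field $Y_a := \langle X,\nu\rangle a - \langle\nu,a\rangle X$ on $\Sigma$, computes $\div_\Sigma Y_a = -n\langle\nu,a\rangle$ (the second-fundamental-form terms cancel by symmetry of the shape operator), and then a single application of the divergence theorem on $\Sigma$ plus the boundary relation $\langle\eta,Y_a\rangle = -\langle\bar\nu,a\rangle$ (from (\ref{mu0})--(\ref{mu1})) gives the identity directly. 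Your route instead invokes the ambient ball: divergence of a constant vector over the enclosed region $\hat\Omega$ to relate $\int_\Sigma\nu$ to $\int_T X$, and divergence of the tangential projection $e^t$ over the wetted region $T\subset\mathbb{S}^n$ to relate $\int_{\partial\Sigma}\bar\nu$ to $\int_T X$. Both arguments are sound, and your computation $\div_{\mathbb{S}^n}(e^t) = -n\langle e,X\rangle$ is correct. What the paper's approach buys is that it is intrinsic to $\Sigma$ and holds for any proper immersion with no region to enclose and no orientation bookkeeping about which side of $\Sigma$ is ``inside''; your version as written needs embeddedness (you flag this and propose passing to chains, which would work but is extra machinery). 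What your approach buys is geometric transparency: the identity is visibly a balance of fluxes across the two pieces of $\partial\hat\Omega$, which is a nice conceptual picture the paper's local computation with $Y_a$ does not offer. One small remark: the orientation worry you raise can be dispatched a priori — under $\nu\mapsto-\nu$ both $H$ and $\bar\nu$ flip sign (by the orientation convention $\{\nu,\eta\}\sim\{\bar\nu,\bar N\}$) while $\eta$ is unchanged, so the claimed identity is orientation-independent and the model-example check you suggest only needs to verify that your $\nu$ is outward for $\hat\Omega$, not the sign in the final statement.
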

\begin{proof}
	Let $a$ be a constant vector field.	By the divergence formula
	\begin{align*}
	\int_{\Sigma} -H\<\nu, a\> d\mu &= \int_\Sigma \Delta \<X, a\>\\
	&=\int_{\partial\Sigma} \nabla_\eta \<X, a\>\\
	&= \int_{\partial\Sigma} \<\eta, a\>. 
	\end{align*}
	On the other hand, for vector field $Y_a=\<X, \nu\> a-\<\nu, a\> X$, its divergence is 
	\[\text{div}Y_a=-n\<v, a\>. \]
	Therefore, by the divergence theorem again and (\ref{mu1}), 
	\begin{align*}
	\int_{\Sigma} -n\<\nu, a\> d\mu &= \int_\Sigma \text{div} Y_a\\
	&=\int_{\partial\Sigma} \<\eta, Y_a\>\\
	&= \int_{\partial\Sigma} -\cos\th \<\eta, a\>-\sin\theta\<\nu, a\>\\
	&= \int_{\partial\Sigma} -\<\cos\th \eta+\sin\theta\nu, a\>\\
	&= \int_{\partial\Sigma} -\<\bar\nu, a\>.  
	\end{align*}
	Combining identities above yield the desired result. 
\end{proof}
\begin{remark}
	Another way to intepret this identity is that the vector field $n\nabla \<X, a\>+H(\<\nu, a\>X-\<X,\nu\>a$ is divergence-free. 
\end{remark}

\subsection{The Bilinear Index Form and An Abstract Formulation}
When $\Sigma$ is capillary, it is somewhat surprising that the second variations for problems we consider will have the same formula. Indeed, for $u=\<Y, \nu\>$, it is show that for appropriate variations satisfying type I or type II constraint, the second variation for functional $E$ is given by, see \cite{RS97, BCJ88, GX20}, 
\begin{equation*}
E''(0) = \int_{\Sigma}|\nabla u|^2-p u^2 d\mu-\int_{\partial \Sigma}q u^2 ds, 
\end{equation*}
for
\begin{align*}
p &:= \text{Rc}^{\Omega}(\nu,\nu)+|\A|^2,\\
q &:= \frac{1}{\sin\th}\A^{\partial\Omega}(\bar{\nu},\bar{\nu})+\cot\th \A^{\Sigma}(\eta,\eta).
\end{align*}
Here, $\A^{\partial\Omega}$ and $\A^{\Sigma}$ are the second fundamental forms with respect to $\bar N$ and $\nu$ respectively. As a consequence, the index form is defined as 
\begin{equation}
\label{indexform}
Q(u,v) = \int_{\Sigma}\left\langle{\nabla u,\nabla v}\right\rangle-p uv d\mu-\int_{\partial \Sigma}q uv ds. 
\end{equation}

We recall the index definition.
\begin{definition}
	The index of a symmetric bi-linear form in a vector space is the maximal dimension of a subspace on which the form is negative definite. 
\end{definition}
\begin{definition}
	$\text{MI}(Q)$ is the index of $Q(\cdot, \cdot)$ in the space of smooth functions on $\Sigma$.  
	\end{definition}
By PDE theory, the following eigenvalue problem with Robin boundary data 
\begin{equation}
\label{Robin}
\begin{cases}
{J}u &=-\lambda u \text{  on  } \Sigma,\\
\nabla_\eta u -qu & =0 \text{  on  }\partial \Sigma.
\end{cases}
\end{equation}
has countable eigenvalues going to infinity. And $\text{MI}(Q)$ is exactly equal to the number of negative eigenvalues counting multiplicities.\\

As we see in \cite{TZindexconstraintsI20}, the index stays the same if replacing the space of smooth functions by $H^1(\Sigma)=W^{1,2}(\Sigma)$, the Sobolev space with one derivative and $L^2$-norm, which is a Hilbert space. In \cite{TZindexconstraintsI20}, we show how the index changes when restricting to a Hilbert subspace of finite codimension. For completeness, we briefly recall the results here.  

Let $H$ be a separable Hilbert space with an inner product $(\cdot, \cdot)$ and $S(\cdot, \cdot)$ is a symmetric continuous bi-linear form and $\phi_i$ is a continuous linear functional on $H$.
The inner product on $H$ induces a linear map $\mathcal{S}$ from $H$ to its continuous dual $H^\ast$ such that, for all $v\in H$, 
\[S(u,v)=(\mathcal{S}u)(v).\] 
Via the Riesz representation theorem, $H^\ast$ can be equipped with an inner product so that it is isometric to $H$. 

\begin{definition} Let $\text{ran}(\mathcal{S})$ be the range of $\mathcal{S}$, $\overline{\text{ran}(\mathcal{S})}$ its closure by the induced norm. $\overline{\text{ran}(\mathcal{S})}-\text{ran}(\mathcal{S})$ is called the set of pure limit points. 
\end{definition}

\begin{theorem}\cite[Theorem 1.1]{TZindexconstraintsI20}
	\label{abstractMorse}
	Let $H$ be a separable Hilbert space and $S(\cdot, \cdot)$ is a continuous symmetric bi-linear form. Then for any non-trivial continuous linear functional $\phi$ such that $\phi$ is not a pure limit point, we have 
	\[ \text{MI}^\phi(S)=\begin{cases}
	\text{MI}(S)-1 \text{   if there is $u\in H$ such that $\mathcal{S}u=\phi$ and $\phi(u)\leq 0$}\\
	\text{MI}(S) \text{  otherwise.}
	\end{cases}
	\]
\end{theorem}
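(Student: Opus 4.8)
The plan is to combine Riesz duality, which recasts $\mathcal{S}$ as a bounded self-adjoint operator, with elementary subspace surgery. First I would dispose of the case $\text{MI}(S)=\infty$: for each finite $k$, a $(k{+}1)$-dimensional subspace on which $S$ is negative definite meets the codimension-one space $\ker\phi$ in dimension at least $k$, so $\text{MI}^\phi(S)=\infty$ as well and, under the convention $\infty-1=\infty$, the conclusion is automatic. Henceforth let $m:=\text{MI}(S)<\infty$. Restricting a form to a codimension-one subspace lowers its index by at most one and never raises it, so $\text{MI}^\phi(S)\in\{m-1,m\}$; moreover $\text{MI}^\phi(S)=m$ exactly when some $m$-dimensional subspace on which $S$ is negative definite is contained in $\ker\phi$, and the whole task is to detect which alternative holds. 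Via Riesz, write $S(u,v)=(Tu,v)$ with $T$ bounded and self-adjoint (self-adjoint since $S$ is symmetric) and $\phi(v)=(w,v)$ with $w\neq 0$; then $\mathcal{S}u=\phi\iff Tu=w$, $\phi(u)=S(u,u)=(Tu,u)$, $\text{ran}(\mathcal{S})$ corresponds to $\text{ran}(T)$, and $\overline{\text{ran}(\mathcal{S})}$ to $\overline{\text{ran}(T)}=(\ker T)^{\perp}$. I would also note that whenever $\mathcal{S}u=\phi$ the number $\mu:=\phi(u)=S(u,u)$ is independent of the chosen $u$, a direct computation from $\mathcal{S}(u-u')=0$.

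For the ``otherwise'' clause I aim to show $\text{MI}^\phi(S)=m$ by deforming an $m$-dimensional $S$-negative-definite subspace $M=\SPAN(e_1,\dots,e_m)$ into $\ker\phi$ without losing negative-definiteness, and the hypothesis that $\phi$ is not a pure limit point splits this into two cases. If $\phi\notin\overline{\text{ran}(\mathcal{S})}$, the orthogonal projection $w_1$ of $w$ onto $\ker T$ is nonzero; then $Tw_1=0$ makes $w_1$ both $S$-isotropic and $S$-orthogonal to all of $H$, while $(w,w_1)=|w_1|^2>0$, so $\tilde e_i:=e_i-|w_1|^{-2}\phi(e_i)\,w_1$ satisfies $\phi(\tilde e_i)=0$ and $S(\tilde e_i,\tilde e_j)=S(e_i,e_j)$, giving the required subspace inside $\ker\phi$. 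If instead $\phi\in\text{ran}(\mathcal{S})$, then since we are in the ``otherwise'' case any solution $u$ of $\mathcal{S}u=\phi$ must satisfy $\mu=\phi(u)>0$; now $\tilde e_i:=e_i-\mu^{-1}\phi(e_i)\,u$ lies in $\ker\phi$, and using $S(e_i,u)=(\mathcal{S}u)(e_i)=\phi(e_i)$ one computes the new Gram matrix to be $A-\mu^{-1}bb^{\mathsf{T}}$ with $A:=(S(e_i,e_j))\prec 0$ and $b:=(\phi(e_i))_i$, still negative definite because $\mu>0$. In both cases $\text{MI}^\phi(S)=m$.

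For the first clause, assume $\mathcal{S}u=\phi$ with $\mu=\phi(u)\le 0$ and suppose, for contradiction, that some $m$-dimensional $M\subseteq\ker\phi$ carries $S|_M<0$; note $u\notin M$, for $u\in M$ would force simultaneously $S(u,u)<0$ and $\phi(u)=0$. If $\mu<0$: since $S(u,e)=\phi(e)=0$ for every $e\in M$ and $S(u,u)=\mu<0$, the $(m{+}1)$-dimensional space $M\oplus\mathbb{R}u$ is $S$-negative-definite, contradicting $\text{MI}(S)=m$. If $\mu=0$: then $u\in\ker\phi$; pick $z$ with $\phi(z)=1$ (so $z\notin M\oplus\mathbb{R}u$) and $S$-orthogonalize it against the nondegenerate subspace $M$ to obtain $z'$ with $S(z',M)=0$ and still $S(z',u)=1$ (as $S(u,M)=0$); then $S$ restricted to $M\oplus\SPAN(u,z')$ is the orthogonal sum of $A\prec 0$ on $M$ and the signature-$(1,1)$ form $\bigl(\begin{smallmatrix}0&1\\1&\ast\end{smallmatrix}\bigr)$ on $\SPAN(u,z')$, hence has $m+1$ negative directions, again impossible. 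Therefore $\text{MI}^\phi(S)=m-1$.

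I expect the genuinely delicate points to be the degenerate sub-case $\mu=0$ above and the precise role of the pure-limit-point exclusion: one must verify that ``$\phi$ is not a pure limit point'' is exactly what permits $M$ to be pushed into $\ker\phi$, and that in the excluded regime --- where $0$ lies in the essential spectrum of $T$ and $w$ is a limit of $\text{ran}(T)$ without belonging to it --- the dichotomy genuinely fails. The remaining ingredients (the interlacing bound, the Riesz translation, and the Gram-matrix computations) are routine.
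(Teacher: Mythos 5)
This statement is quoted from the companion paper \cite{TZindexconstraintsI20} and is not proved in the present paper, so there is no in-text argument to compare against; judged on its own, your proof is correct and complete. The reduction to $m:=\text{MI}(S)<\infty$, the interlacing bound $\text{MI}^\phi(S)\in\{m-1,m\}$, and the Riesz translation $S(u,v)=(Tu,v)$, $\phi=(w,\cdot)$ are all sound, and you correctly isolate where the pure-limit-point hypothesis enters: it guarantees that either $\phi\in\text{ran}(\mathcal{S})$ or $w$ has a nonzero component $w_1\in\ker T$, and in the latter case $w_1$ is an $S$-null direction along which $M$ can be translated into $\ker\phi$ without changing its Gram matrix. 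The two Gram-matrix computations ($S(\tilde e_i,\tilde e_j)=S(e_i,e_j)$ in the null-direction case, $A-\mu^{-1}bb^{\mathsf T}$ with $\mu>0$ in the solvable case) are right, as is the well-definedness of $\mu$ via symmetry of $S$; the only step left tacit is the linear independence of the $\tilde e_i$, which follows at once from the nondegeneracy of their Gram matrix. The converse direction is also handled correctly, including the genuinely delicate degenerate subcase $\mu=0$, where the hyperbolic block $\bigl(\begin{smallmatrix}0&1\\1&\ast\end{smallmatrix}\bigr)$ on $\SPAN(u,z')$, $S$-orthogonal to $M$, produces $m+1$ negative directions and the required contradiction. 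This is the natural operator-theoretic route to the result and, as far as one can tell without the companion paper in hand, is in the same spirit as the original argument; no gap.
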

We say $\phi$ is $S$-critical if there is $u\in H$ such that $\mathcal{S}u=\phi$ and $\phi(u)\leq 0$. It turns out that the assumption on whether $\phi$ is a pure limit point is closely related to a Fredholm alternative (see next subsection). Also, there is a version for several functionals.  

\begin{theorem}
	\label{severalfunc}\cite[Theorem 1.3]{TZindexconstraintsI20}
	Let $H$ be a separable Hilbert space and $S(\cdot, \cdot)$ is a continuous symmetric bi-linear form. Suppose that, for $i=1,...n$,
	\[\mathcal{S}(u_i)=\phi_i,\]
	and $\{\phi_i\}_{i=1}^n$ are linearly independent. Then, \[\text{MI}^{\phi_1,...,\phi_n}(S)=\text{MI}(S)-c,\]
	where $c$ is the number of non-positive eigenvalues of the symmetric matrix $S(u_i,u_j)$. In particular, 
	\[ \text{MI}(S)\geq c.\]
\end{theorem}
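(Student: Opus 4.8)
The plan is to induct on $n$, using Theorem~\ref{abstractMorse} both as the base case and as the engine of the inductive step. I first record three preliminary facts. (i) The hypothesis $\mathcal{S}u_i=\phi_i$ means each $\phi_i\in\text{ran}(\mathcal{S})$, so no $\phi_i$ is a pure limit point and Theorem~\ref{abstractMorse} applies to it. (ii) The $u_i$ are linearly independent, since a relation $\sum_i c_iu_i=0$ forces $\sum_i c_i\phi_i=\mathcal{S}\big(\sum_i c_iu_i\big)=0$. (iii) The matrix $M:=\big(S(u_i,u_j)\big)$ is symmetric (because $S$ is), equals $\big(\phi_i(u_j)\big)$, and is independent of the choice of preimages: if $\mathcal{S}k=0$ then $S(u_i+k,u_j)=S(u_i,u_j)+(\mathcal{S}k)(u_j)=S(u_i,u_j)$, and similarly in the second slot. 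In particular $\phi_1(u_1)=S(u_1,u_1)$ is intrinsic, so the criterion ``$\phi_1$ is $S$-critical'' in Theorem~\ref{abstractMorse} becomes simply ``$\phi_1(u_1)\le 0$''.

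Next I would reduce to the case where $M$ is diagonal. Replacing $(\phi_i)_{i=1}^n$ by $(\phi_i')_{i=1}^n$ with $\phi_i':=\sum_j P_{ij}\phi_j$ and $P\in\mathrm{GL}_n(\mathbb{R})$ leaves the constraint space $W:=\bigcap_i\ker\phi_i$ — hence $\text{MI}^{\phi_1,\dots,\phi_n}(S)$ — unchanged, replaces the preimages by $u_i'=\sum_j P_{ij}u_j$, and replaces $M$ by $PMP^{\top}$, which by Sylvester's law of inertia has the same number $c$ of non-positive eigenvalues. Choosing $P$ so that $PMP^{\top}=\text{diag}(\epsilon_1,\dots,\epsilon_n)$ with each $\epsilon_i\in\{1,-1,0\}$, I may therefore assume $S(u_i,u_j)=\epsilon_i\delta_{ij}$, and then $c=\#\{i:\epsilon_i\le 0\}$.

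For $n=1$ this is exactly Theorem~\ref{abstractMorse}: $\text{MI}^{\phi_1}(S)=\text{MI}(S)-1$ precisely when $\phi_1(u_1)=\epsilon_1\le 0$, i.e. when $c=1$, and $\text{MI}^{\phi_1}(S)=\text{MI}(S)$ otherwise. For the inductive step, apply Theorem~\ref{abstractMorse} to $\phi_1$ to get $\text{MI}^{\phi_1}(S)=\text{MI}(S)-[\epsilon_1\le 0]$, which by definition equals $\text{MI}(S_1)$ for $S_1:=S|_{H_1}$ on the separable Hilbert space $H_1:=\ker\phi_1$. The restrictions $\phi_2|_{H_1},\dots,\phi_n|_{H_1}$ are still linearly independent (a nontrivial relation would make a combination of $\phi_2,\dots,\phi_n$ proportional to $\phi_1$); and — this is the point of the diagonalization — since $\phi_1(u_i)=\epsilon_1\delta_{1i}=0$ for $i\ge 2$, each such $u_i$ lies in $H_1$ and satisfies $\mathcal{S}_1 u_i=\phi_i|_{H_1}$, so the restricted functionals again lie in the range of the restricted operator (hence are not pure limit points) and carry the truncated Gram matrix $\text{diag}(\epsilon_2,\dots,\epsilon_n)$. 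Since $H_1\cap\bigcap_{i\ge 2}\ker\phi_i=W$, the inductive hypothesis on $(H_1,S_1)$ yields
\[
\text{MI}^{\phi_1,\dots,\phi_n}(S)=\text{MI}(S_1)-\sum_{i=2}^{n}[\epsilon_i\le 0]=\text{MI}(S)-\sum_{i=1}^{n}[\epsilon_i\le 0]=\text{MI}(S)-c,
\]
and $\text{MI}(S)\ge c$ follows at once from $\text{MI}^{\phi_1,\dots,\phi_n}(S)\ge 0$.

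The step needing the most care is propagating the hypotheses of Theorem~\ref{abstractMorse} down the induction: the surviving functional at each stage must fail to be a pure limit point for the restricted form, and the diagonal normalization of $M$ is exactly what arranges this, by keeping the chosen preimages inside the smaller Hilbert space $H_1$. A degenerate Gram matrix then needs no separate treatment — a vanishing $\epsilon_i$ simply contributes a free $-1$ to the index drop, matching its contribution to $c$ — although one could alternatively argue in one stroke by $S$-orthogonally splitting $H$ along the span of $u_1,\dots,u_n$ to handle the nonzero $\epsilon_i$, and pairing the remaining $u_i$ hyperbolically against the Riesz representatives of the corresponding $\phi_i$ to handle the zero ones.
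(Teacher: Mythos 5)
Your proof is correct. The paper cites its companion \cite{TZindexconstraintsI20} for this theorem and gives no proof in the present document, so there is strictly nothing here to compare against; but the route you take --- note that $M=(S(u_i,u_j))=(\phi_i(u_j))$ is well defined independently of the choice of preimages, diagonalize it by Sylvester's law of inertia, then induct on $n$ via the single-functional Theorem~\ref{abstractMorse}, exploiting that the diagonalization forces $u_2,\dots,u_n\in\ker\phi_1$ so they again witness the range condition (hence the non-pure-limit-point condition) for the restricted form --- is the natural one given the available tools, and is almost certainly the argument in the companion paper. The only genuinely delicate point, propagating the non-pure-limit-point hypothesis of Theorem~\ref{abstractMorse} down the induction, is handled exactly by your normalization, and the $\epsilon_i=0$ case falls out automatically since a zero eigenvalue is non-positive and $\phi_i(u_i)=0\le 0$ still triggers the drop in Theorem~\ref{abstractMorse}.
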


Theorem \ref{severalfunc} can  be applied to estimate the Morse index for  capillary hypersurfaces in a Euclidean ball. We'll rewrite it in this context. 

\begin{theorem}\label{thmindex}
	Assume $X: \Sigma\to \mathbb{R}^{n+1}$ is an immersed capillary hypersurface in the Euclidean unit ball $\mathbb{B}^{n+1}$. Let $\varphi_1,...\varphi_m$ be independent continuous linear functionals in $H^1(\Sigma)$ and $G=\cap_{i=1}^m \text{Ker}(\varphi_i)$.  
	Let $\{(u_i\}_{i=1}^{k}$ be $C^2$ functions so that
	$$ \begin{cases}
&u_i\in G;\\
&Ju_i=\phi_i, \textrm{ on } \Sigma;\\
&\nabla_\eta u_i=q u_i, \textrm{ on } \partial\Sigma.
\end{cases}  $$
Furthermore, assume that $\varphi_1,...\varphi_m, \bar\phi_1,...\bar\phi_k$ are linearly independent where $\bar\phi_i$ is the linear functional defined by $L^2(\Sigma)$-multiplication by $\phi_i$. Then \[\text{MI}^{\lbrace\varphi_1,...\varphi_m, \bar\phi_1,\cdots,\bar\phi_k\rbrace}(Q)=\text{MI}^{\lbrace\varphi_1,...\varphi_m\rbrace}(Q)-i_k,\] where  $i_k$ is the number of nonnegative eigenvalues of the matrix 
$$\Upsilon:=\left( \int_\Sigma u_i\phi_j\right).$$
	In particular, $\text{MI}^{\lbrace\varphi_1,...\varphi_m\rbrace}(Q)\geq i_k$.
	
\end{theorem}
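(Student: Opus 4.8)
The plan is to obtain Theorem~\ref{thmindex} as a direct instance of Theorem~\ref{severalfunc}, applied not on the whole space $H^1(\Sigma)$ but on the closed subspace $G=\bigcap_{i=1}^m\ker(\varphi_i)$. Since $H^1(\Sigma)$ is separable and each $\varphi_i$ is continuous, $G$ is a separable Hilbert space of codimension $m$ when given the restricted $H^1$ inner product, and $Q|_G$ is a continuous symmetric bilinear form on it (continuity of $Q$ on $H^1(\Sigma)$ uses $p\in L^\infty(\Sigma)$, $q\in L^\infty(\partial\Sigma)$, and the trace embedding $H^1(\Sigma)\hookrightarrow L^2(\partial\Sigma)$). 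By definition of the constrained indices, $\text{MI}^{\{\varphi_1,\dots,\varphi_m\}}(Q)$ is the index of $Q|_G$ on $G$, while $\text{MI}^{\{\varphi_1,\dots,\varphi_m,\bar\phi_1,\dots,\bar\phi_k\}}(Q)$ is the index of $Q|_G$ restricted further to $\bigcap_{i=1}^k\ker(\bar\phi_i|_G)$; so it suffices to run Theorem~\ref{severalfunc} on $G$ with the form $Q|_G$ and the functionals $\bar\phi_1|_G,\dots,\bar\phi_k|_G$.

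The one substantive computation is to identify, in $G^\ast$, the images of the $u_i$ under the map $\mathcal S$ induced by $Q|_G$. For every $v\in H^1(\Sigma)$, Green's identity gives $\int_\Sigma\langle\nabla u_i,\nabla v\rangle=-\int_\Sigma(\Delta u_i)v+\int_{\partial\Sigma}(\nabla_\eta u_i)v$; since the ambient metric is flat we have $p=|\A|^2$, hence $\Delta+p=J$, so
\[
\begin{aligned}
Q(u_i,v)&=-\int_\Sigma(Ju_i)\,v\,d\mu+\int_{\partial\Sigma}(\nabla_\eta u_i-qu_i)\,v\,ds\\
&=-\int_\Sigma\phi_i\,v\,d\mu=-\bar\phi_i(v),
\end{aligned}
\]
where we used $Ju_i=\phi_i$ on $\Sigma$ and $\nabla_\eta u_i=qu_i$ on $\partial\Sigma$ (the $C^2$ regularity of $u_i$ legitimizes the integration by parts). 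Restricting $v$ to $G$, and recalling $u_i\in G$, this says exactly $\mathcal S(u_i)=-\bar\phi_i|_G$. In particular $Q(u_i,u_j)=-\int_\Sigma u_j\phi_i=-\Upsilon_{ji}$, so by symmetry of $Q$ the matrix $\Upsilon$ is symmetric and $\big(Q(u_i,u_j)\big)=-\Upsilon$.

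Next I would check the remaining hypothesis of Theorem~\ref{severalfunc}, namely that $\bar\phi_1|_G,\dots,\bar\phi_k|_G$ are linearly independent on $G$. If $\sum_i c_i\bar\phi_i$ vanishes on $G=\bigcap_j\ker(\varphi_j)$, then a functional annihilating a finite intersection of kernels is a linear combination of those functionals, so $\sum_i c_i\bar\phi_i=\sum_j d_j\varphi_j$ for some scalars; the assumed independence of $\{\varphi_1,\dots,\varphi_m,\bar\phi_1,\dots,\bar\phi_k\}$ in $H^1(\Sigma)^\ast$ then forces all $c_i=0$. Applying Theorem~\ref{severalfunc} to $G$, $Q|_G$, the functionals $-\bar\phi_i|_G$ and the preimages $u_i$ yields
\[
\text{MI}^{\{\varphi_1,\dots,\varphi_m,\bar\phi_1,\dots,\bar\phi_k\}}(Q)=\text{MI}^{\{\varphi_1,\dots,\varphi_m\}}(Q)-c,
\]
where $c$ is the number of non-positive eigenvalues of $\big(Q(u_i,u_j)\big)=-\Upsilon$; since the spectrum of $-\Upsilon$ is the negative of that of $\Upsilon$, $c$ equals the number of nonnegative eigenvalues of $\Upsilon$, i.e. $c=i_k$. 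This is the first assertion, and the ``in particular'' follows since the left side is $\geq 0$.

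I do not expect a real obstacle: once Theorem~\ref{severalfunc} is in hand the statement is essentially bookkeeping, and the only points requiring genuine care are (i) verifying that $Q|_G$ satisfies the continuity and symmetry hypotheses of the abstract theorem, (ii) the identification that restricting $Q|_G$ by $\{\bar\phi_i|_G\}$ is the same as restricting $Q$ on $H^1(\Sigma)$ by $\{\varphi_j\}\cup\{\bar\phi_i\}$, and (iii) the integration-by-parts identity $\mathcal S(u_i)=-\bar\phi_i$ together with the sign conversion between eigenvalues of $-\Upsilon$ and of $\Upsilon$. The most delicate of these is ensuring that the index on smooth functions in $G$ agrees with the index on all of $G$, which is handled exactly as in the unconstrained case of \cite{TZindexconstraintsI20}.
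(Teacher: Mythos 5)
Your proof is correct and follows the same route as the paper: restrict $Q$ to the Hilbert subspace $G$, integrate by parts to identify $\mathcal S(u_i)$, and apply Theorem~\ref{severalfunc}. In fact you are more careful than the paper's own write-up: the paper records $Q(u_i,u_j)=\int_\Sigma u_i\phi_j$, whereas integration by parts actually gives $Q(u_i,u_j)=-\int_\Sigma \phi_i u_j$; your version correctly produces $\bigl(Q(u_i,u_j)\bigr)=-\Upsilon$, uses symmetry of $Q$ to deduce that $\Upsilon$ is symmetric, and then converts the ``non-positive eigenvalues of $-\Upsilon$'' from Theorem~\ref{severalfunc} into ``nonnegative eigenvalues of $\Upsilon$,'' which is exactly the count $i_k$ in the statement. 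Your short justification that $\bar\phi_1|_G,\dots,\bar\phi_k|_G$ remain independent (a functional vanishing on $\bigcap\ker\varphi_j$ lies in $\mathrm{span}\{\varphi_j\}$) is also the right argument, merely left implicit in the paper.
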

\begin{proof}
	Since $G$ is a subspace of finite codimension, it is a Hilbert space. Because  $\varphi_1,...\varphi_m, \bar\phi_1,...\bar\phi_k$ are linearly independent,  $\bar\phi_1,...\bar\phi_k$ are linearly independent as linear functionals on $G$. Recall that
		\[ Q(u,v):= \int_{\Sigma}\nabla u\nabla v-puv-\int_{\partial \Sigma}quv.\]
		By integration by parts, 
		\begin{align*}
		Q(u_i, u_j) &=\int_{\Sigma}-Ju_i u_j+\int_{\partial\Sigma} (\nabla_\eta u_i-qu_i)u_j\\
		&= \int_\Sigma u_i\phi_j.
		\end{align*}
 	Applying Theorem \ref{severalfunc} for linear independent functionals $\bar\phi_1,...\bar\phi_k$ on $G$ finishes the proof. 
\end{proof}

Also, another application of the abstract formulation is the decomposing of the Robin boundary value problem to ones with simpler conditions. An earlier version is given by the first author in \cite{tran16index}.  First, one consider only variations fixing the boundary. The associated fixed boundary problem is given by the following Dirichlet consideration:  
\begin{equation}
\label{FBP}
\begin{cases}
{J}v &=-\delta v \text{ on } \Sigma,\\
v &= 0 \text{ on } \partial \Sigma.
\end{cases}
\end{equation}

The influence of the boundary is, then, captured by the Jacobi-Steklov problem. Suppose that $q\in C^\infty(\partial \Sigma)$ be a non-zero non-negative function. We consider:  
\begin{equation}
\label{Jacobi-Steklov}
\begin{cases}
{J}h &=0 \text{ on } \Sigma,\\
\nabla_\eta h &= \mu q h \text{ on } \partial \Sigma.
\end{cases}
\end{equation}

\begin{theorem}\cite[Theorem 4.1]{TZindexconstraintsI20}\cite[Theorem 3.3]{tran16index}
	\label{indexdecompose}
	Let $(\Sigma, \partial \Sigma)$ be a smooth compact Riemannian manifold with boundary and $q\geq 0$, $q\not\equiv 0$. Then $\text{MI}(Q)$ is equal to  
	\[a+b.\]
	Here $a$ is the number of non-positive eigenvalues of (\ref{FBP}) counting multiplicity; $b$ is the number of eigenvalues smaller than $1$ of (\ref{Jacobi-Steklov}) counting multiplicity.  
\end{theorem}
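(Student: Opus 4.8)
The plan is to compute $\mathrm{MI}(Q)$ by splitting $H^1(\Sigma)$ $Q$-orthogonally into a ``Dirichlet part'' and a ``$J$-harmonic part'' and reading off the index of $Q$ on each. Recall $\mathrm{MI}(Q)$ is the index of $Q$ on the Hilbert space $H^1(\Sigma)$; let $H^1_0(\Sigma)$ denote the functions vanishing on $\p\Sigma$. Integrating by parts once (using $\phi|_{\p\Sigma}=0$) gives $Q(\phi,v)=-\int_\Sigma\phi\,(Jv)$ for $\phi\in H^1_0(\Sigma)$ and any $v\in H^1(\Sigma)$, so the $Q$-orthogonal complement of $H^1_0(\Sigma)$ is exactly $\mathcal{H}:=\{h\in H^1(\Sigma): Jh=0 \text{ in }\mathrm{int}\,\Sigma\}$, whose elements are smooth in the interior by elliptic regularity. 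When $0$ is \emph{not} an eigenvalue of the fixed boundary problem \eqref{FBP}, the trace $h\mapsto h|_{\p\Sigma}$ is an isomorphism $\mathcal{H}\to H^{1/2}(\p\Sigma)$, one has the direct sum $H^1(\Sigma)=H^1_0(\Sigma)\oplus\mathcal{H}$, and the cross terms vanish, so $\mathrm{MI}(Q)=\mathrm{ind}\,Q|_{H^1_0(\Sigma)}+\mathrm{ind}\,Q|_{\mathcal{H}}$.

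Each summand is then identified. On $H^1_0(\Sigma)$ one has $Q(\phi,\phi)=-\int_\Sigma\phi\,(J\phi)$, the Dirichlet form of $-J$, whose index is the number of negative eigenvalues of \eqref{FBP}; in this nondegenerate case none of them is zero, so this number is $a$. On $\mathcal{H}$, using $Jh=0$ and integrating by parts once more, $Q(h,h)=\int_{\p\Sigma}(\n_\eta h-qh)h$; writing $\Lambda$ for the Jacobi Dirichlet-to-Neumann operator $h|_{\p\Sigma}\mapsto\n_\eta h|_{\p\Sigma}$, this is the quadratic form of $\Lambda-q$ on $H^{1/2}(\p\Sigma)$, and testing a Jacobi--Steklov eigenfunction $h$ of \eqref{Jacobi-Steklov} against itself gives $Q(h,h)=(\mu-1)\int_{\p\Sigma}qh^2$, so the negative directions correspond precisely to eigenvalues $\mu<1$; hence $\mathrm{ind}\,Q|_{\mathcal{H}}=b$ and $\mathrm{MI}(Q)=a+b$. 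One point needing care is that $q$ is only assumed $\ge 0$, so the pencil $\Lambda g=\mu(q\,\cdot)g$ has a possibly degenerate weight; I would analyze the signature of $\Lambda-q$ on $H^{1/2}(\p\Sigma)$ directly, or first reduce to $q>0$ and pass to a limit.

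The hard part is the degenerate case, when $0$ is an eigenvalue of \eqref{FBP} of multiplicity $m_0>0$: then $\Lambda$ is undefined and the splitting fails in two linked ways. The $m_0$-dimensional Dirichlet null-space $\mathcal{N}$ lies inside $H^1_0(\Sigma)\cap\mathcal{H}$ with $Q\equiv 0$ on it, while by the Fredholm alternative $H^1_0(\Sigma)+\mathcal{H}$ has codimension $m_0$, a transverse complement being detected by the boundary functionals $v\mapsto\int_{\p\Sigma}(\n_\eta\psi)v$, $\psi\in\mathcal{N}$, which are linearly independent by unique continuation. To repair the count I would choose $\xi_1,\dots,\xi_{m_0}\in H^1(\Sigma)$ whose boundary traces are dual to $\{\n_\eta\psi_j\}$, so that $Q(\psi_j,\xi_k)=\int_{\p\Sigma}(\n_\eta\psi_j)\xi_k=\delta_{jk}$; then $Q$ restricted to $\mathcal{N}\oplus\mathrm{span}\{\xi_k\}$ is a hyperbolic form of signature $(m_0,m_0)$, contributing exactly $m_0$ to the index. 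After $Q$-orthogonalizing the remaining pieces — the span $V$ of Dirichlet eigenfunctions with nonzero eigenvalue, the $J$-harmonic/Steklov part, and $\mathcal{N}\oplus\mathrm{span}\{\xi_k\}$ — one obtains $\mathrm{MI}(Q)=a'+b+m_0=a+b$, where $a'=a-m_0$ is the strictly negative Dirichlet count; thus the word ``non-positive'' in the definition of $a$ is exactly what absorbs the $m_0$ extra negative directions created by letting the Dirichlet null-modes move at the boundary. I expect this bookkeeping, together with the genericity adjustments needed to make the pieces mutually $Q$-orthogonal and with the trace-regularity and compactness statements for $\mathcal{H}$, to be the main technical work; a structurally equivalent route is to perturb $p$ to $p+\ep$ to remove the degeneracy and track how $a$, $b$, and $\mathrm{MI}(Q)$ vary, but the accounting is essentially the same.
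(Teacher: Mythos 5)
The paper does not prove Theorem \ref{indexdecompose}; it imports it verbatim from \cite{TZindexconstraintsI20} and \cite{tran16index}. Your decomposition into a Dirichlet piece $H^1_0(\Sigma)$ and a $J$-harmonic piece $\mathcal{H}$, with $Q$ block-diagonal across the splitting and each block diagnosed by the fixed-boundary and Jacobi--Steklov spectra, is precisely the scheme of \cite{tran16index}; \cite{TZindexconstraintsI20} reproves it through the abstract Hilbert-space machinery of Theorem \ref{abstractMorse}. So your route is the older of the two proofs the paper points to, and your non-degenerate case is essentially complete and correct: $Q(\phi,\phi)=\delta\int_\Sigma\phi^2$ on Dirichlet eigenfunctions gives $a'$ negative directions, and $Q(h,h)=(\mu-1)\int_{\partial\Sigma}qh^2$ on Steklov eigenfunctions gives $b$.

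Where you are right to be cautious is exactly where you flag it. In the degenerate case your hyperbolic pairing $Q(\psi_j,\xi_k)=\int_{\partial\Sigma}\xi_k\,\nabla_\eta\psi_j=\delta_{jk}$ does produce a signature-$(m_0,m_0)$ block and hence the extra $m_0$, and that is the correct mechanism explaining why the statement says ``non-positive'' rather than ``negative.'' But the three pieces you name are \emph{not} mutually $Q$-orthogonal as initially chosen: $Q(\xi_k,\phi)=-\int_\Sigma\phi\,J\xi_k$ is generically nonzero for Dirichlet eigenfunctions $\phi$, and $Q(\xi_k,h)$ is generically nonzero for $J$-harmonic $h$. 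The first can be removed cleanly by insisting $J\xi_k\in\mathcal{N}$ (take $J\xi_k=-\psi_k$ with $\xi_k|_{\partial\Sigma}$ dual to $\nabla_\eta\psi_j$, which is consistent with the Fredholm solvability condition), after which $\text{span}\{\xi_k\}\perp_Q A$ automatically; the second cannot be removed by modifying $\xi_k$, so ``$Q$-orthogonalize the remaining pieces'' should be replaced by a congruence argument on the block $\mathcal{N}\oplus\mathcal{H}'\oplus\text{span}\{\xi_k\}$: subtract $\psi$-combinations from the $\xi_k$'s to kill $Q(\xi_j,\xi_k)$, then subtract $\psi$-combinations from the $h$'s to kill $Q(h,\xi_k)$, using $Q(\psi_j,h)=0$ and $Q(\psi_j,\xi_k)=\delta_{jk}$. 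This reduces the block to $\begin{pmatrix}0&I\\ I&0\end{pmatrix}\oplus S$ and gives index $m_0+b$ cleanly.

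Two further points you should make explicit rather than defer. First, in the degenerate case equation \eqref{Jacobi-Steklov} needs a definition, since the Dirichlet-to-Neumann map is neither single-valued nor everywhere defined; the correct domain is the codimension-$m_0$ trace subspace $\{g:\int_{\partial\Sigma}g\,\nabla_\eta\psi=0\ \forall\psi\in\mathcal{N}\}$, on which the pairing $\int_{\partial\Sigma}(\nabla_\eta h)v$ is well defined modulo $\mathcal{N}$, and your bookkeeping must show that passing to this subspace costs nothing because the $m_0$ missing directions are exactly the ones supplied by $\text{span}\{\xi_k\}$. Second, the degenerate weight $q\ge 0$ with $q\not\equiv 0$ is not merely a technicality to ``pass to a limit'' away: a $J$-harmonic $h$ with $\int_{\partial\Sigma}qh^2=0$ but $Q(h,h)=\int_{\partial\Sigma}h\,\nabla_\eta h\neq 0$ is not an eigenfunction of \eqref{Jacobi-Steklov} for any $\mu$, so you must verify that no negative directions hide in this null set (or that it is $Q$-nonnegative and can be absorbed into the complement). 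With these two gaps closed, and the congruence reduction made precise, your argument is a correct proof and coincides in spirit with \cite{tran16index}.
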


\subsection{Fredholm Alternative with Robin Boundary Condition}
\label{Fredholm} 
In this subsection, we will derive a Fredholm alternative for an elliptic operator with a Robin boundary condition. The presentation here follows \cite{evans10}. First, we recall the abstract Fredholm alternative. 

\begin{theorem}[Abstract Fredholm Alternative]\cite[Appendix D]{evans10}
	Let $K:H\mapsto H$ be a compact linear operator on a Hilbert space and $K^\ast$ its adjoint. Then
	\begin{itemize}
		\item The null space of $\text{Id}-K$ is finite dimensional,
		\item The range of $\text{Id}-K$ is closed,
		\item The range of $\text{Id}-K$ is the orthogonal complement of the null space of $\text{Id}-K^\ast$,
		\item The null space of $\text{Id}-K$ is trivial if and only if $\text{Id}-K$ is onto,
		\item The dimension of the null space of $\text{Id}-K$ is equal to that of $\text{Id}-K^\ast.$ 
	\end{itemize}
\end{theorem}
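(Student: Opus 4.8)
The statement is the classical Riesz--Schauder theorem, so the plan is to prove the five assertions in the listed order, using compactness of $K$ (and of its adjoint $K^\ast$) decisively at each stage; throughout, operators of the form $\text{Id}-K_j$ with $K_j$ compact will recur, and the earlier bullets will be reapplied to them.

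For the first bullet I would note that on $N:=\ker(\text{Id}-K)$ the operator $K$ restricts to the identity of $N$; since $K$ is compact the identity of $N$ is compact, so the closed unit ball of $N$ is compact and $N$ is finite-dimensional. For the second bullet (closed range), suppose $(\text{Id}-K)u_j\to f$; writing $u_j=n_j+w_j$ with $n_j\in N$, $w_j\in N^{\perp}$ and using $(\text{Id}-K)n_j=0$, I may assume $u_j\in N^{\perp}$. The key sub-step is that $\{\|u_j\|\}$ is bounded: otherwise $v_j:=u_j/\|u_j\|$ satisfies $(\text{Id}-K)v_j\to0$, and after passing to a subsequence with $Kv_j\to w$ one gets $v_j=(\text{Id}-K)v_j+Kv_j\to w$, so $\|w\|=1$, $w\in N^{\perp}$, and $(\text{Id}-K)w=0$, contradicting $w\in N$. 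With $\{u_j\}$ bounded, extract $Ku_j\to g$; then $u_j\to f+g=:u$ and $(\text{Id}-K)u=f$. The third bullet is then a general fact: a bounded operator $T$ with closed range satisfies $\text{ran}(T)=\ker(T^\ast)^{\perp}$, applied to $T=\text{Id}-K$; since $K^\ast$ is also compact, the same reasoning shows $\text{ran}(\text{Id}-K^\ast)=\ker(\text{Id}-K)^{\perp}$ is closed as well.

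For the fourth bullet, assume $\ker(\text{Id}-K)=\{0\}$ and consider the subspaces $H_k:=(\text{Id}-K)^kH$, each closed because $(\text{Id}-K)^k=\text{Id}-K_k$ with $K_k$ compact. If $H_1\neq H$, injectivity forces a strictly decreasing chain $H_0\supsetneq H_1\supsetneq H_2\supsetneq\cdots$; picking unit vectors $u_k\in H_k\cap H_{k+1}^{\perp}$, the expansion $Ku_k-Ku_l=u_k-\big[(\text{Id}-K)u_k-(\text{Id}-K)u_l+u_l\big]$ with the bracket lying in $H_{k+1}$ for $k<l$ gives $\|Ku_k-Ku_l\|\ge1$, contradicting compactness of $K$. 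Hence $\text{Id}-K$ is onto. Conversely, if $\text{Id}-K$ is onto then the third bullet forces $\ker(\text{Id}-K^\ast)=\{0\}$; applying what was just proved to the compact operator $K^\ast$ shows $\text{Id}-K^\ast$ is onto, and the third bullet for $K^\ast$ then gives $\ker(\text{Id}-K)=\{0\}$.

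Finally, for the equality of dimensions, let $n=\dim\ker(\text{Id}-K)$ and $m=\dim\ker(\text{Id}-K^\ast)$, with orthonormal bases $\{\phi_i\}_{i=1}^n$, $\{\psi_i\}_{i=1}^m$. Supposing $n<m$, I would set $S:=K+\sum_{i=1}^n(\cdot,\phi_i)\psi_i$, still compact as a finite-rank perturbation of $K$, and check $\text{Id}-S$ is injective: if $(\text{Id}-S)u=0$ then $(\text{Id}-K)u=\sum_{i}(u,\phi_i)\psi_i$ lies in $\text{ran}(\text{Id}-K)=\ker(\text{Id}-K^\ast)^{\perp}$, so pairing with each $\psi_j$, $j\le n$, yields $(u,\phi_j)=0$, whence $u\in\ker(\text{Id}-K)$ and then $u=0$. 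By the fourth bullet $\text{Id}-S$ is onto, so $\psi_{n+1}=(\text{Id}-S)u$ for some $u$; pairing with $\psi_{n+1}$ and using $((\text{Id}-K)u,\psi_{n+1})=(u,(\text{Id}-K^\ast)\psi_{n+1})=0$ gives $1=0$, a contradiction, so $n\ge m$, and exchanging $K$ with $K^\ast$ gives $n=m$. The main obstacle is the closed-range step together with its reuse in the fourth bullet: the bounded-sequence dichotomy and the strictly decreasing chain of iterated ranges are exactly where compactness is exploited, and the reductions to $N^{\perp}$ and the closedness of the $H_k$ need to be handled with care.
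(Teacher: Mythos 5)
The paper does not prove this theorem; it states it as a cited background result from Evans's \emph{Partial Differential Equations}, Appendix D, and uses it as a black box to derive the Fredholm alternative with Robin boundary condition (Theorem \ref{FredholmRobin}). Your proposal is a correct, self-contained proof by the standard Riesz--Schauder route: finite-dimensionality of the kernel from compactness of $K\vert_{N}=\mathrm{Id}_N$; closed range via the bounded/unbounded dichotomy on $N^{\perp}$; the orthogonality identity $\mathrm{ran}(T)=\ker(T^\ast)^{\perp}$ once the range is closed; injectivity $\Leftrightarrow$ surjectivity via the strictly decreasing chain $H_k=(\mathrm{Id}-K)^kH$ of closed subspaces together with an almost-orthogonal sequence defeating compactness; and equality of nullities via a finite-rank perturbation $S=K+\sum_i(\cdot,\phi_i)\psi_i$ rendering $\mathrm{Id}-S$ injective, hence onto, producing a contradiction if $n<m$. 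Each step is carefully justified, including the key sub-observations that $(\mathrm{Id}-K)^k=\mathrm{Id}-K_k$ with $K_k$ compact (so the earlier closed-range bullet is reusable), and that $\mathrm{ran}(\mathrm{Id}-K)=\ker(\mathrm{Id}-K^\ast)^{\perp}$ is invoked both to pass between $K$ and $K^\ast$ in the fourth bullet and to kill the cross term $((\mathrm{Id}-K)u,\psi_{n+1})$ in the fifth. This matches the approach in Evans; there is no gap.
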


For a compact domain $\Sigma$ with a smooth boundary, let $J=\Delta+p$ for a smooth bounded function $p$. The trace operator $T:H^1(\Sigma)\mapsto L^2(\partial \Sigma)$ is such that, for $u\in C^\infty({\Sigma})$, which is dense in $H^1(\Sigma)$, 
\[ Tu=u\mid_{\partial \Sigma}.\]
The trace theorem then asserts that
\[ ||Tu||_{L^2(\partial \Sigma)}\leq c(\Sigma)||u||_{H^1(\Sigma)}.\] 
Similarly, we define $D$ to be the extension of the normal derivative $D: H^1(\Sigma)\mapsto L^2(\partial \Sigma)$. For $u\in  C^\infty({\Sigma})$ and $\eta$ the outward conormal vector along $\partial \Sigma$, 
\[Du=\nabla_\eta u.\] 
Given function $f\in L^2(\Sigma)$ and $g\in L^2(\partial\Sigma)$, one considers the system
\begin{equation}
\label{generalRobin}
\begin{cases}
{J}u &= f \text{  on  } \Sigma,\\
D_\eta u -qu & =g \text{  on  }\partial \Sigma.
\end{cases}
\end{equation}
The associated homogeneous problem is given by
\begin{equation}
\label{homRobin}
\begin{cases}
{J}u &= 0 \text{  on  } \Sigma,\\
D_\eta u -qu & =0 \text{  on  }\partial \Sigma.
\end{cases}
\end{equation}
The weak solution approach is concerned with the appropriate symmetric bilinear form on $H^1(\Sigma)$:
\[ Q(u,v):= \int_{\Sigma}\nabla u\nabla v-puv-\int_{\partial \Sigma}qT(u)T(v),\]
for all $u, v\in H^1(\Sigma)$. This bilinear form induces an abstract linear map from $H^1(\Sigma)$ to its continuous dual $(H^1)^\ast(\Sigma)$ such that
\[Q(u,v):=\mathcal{Q}u(v).\] 

For $u, v\in C^\infty({\Sigma})$, by part integration, one observes 
\[Q(u,v)= -\int_{\Sigma} vJu +\int_{\partial \Sigma}(D(u)-qT(u))T(v). \]
So, formally, one see that, 
\[\mathcal{Q}u=(-Ju, D(u)-qT(u))\] 
with the functional acting by $L^2$-inner product on the interior and on the boundary:
\begin{align*}
Q(u,v)&=\mathcal{Q}u(v)\\
&=(-Ju, D(u)-qT(u))(v)\\
&=(-Ju, v)_{L^2(\Sigma)}+(Du-gT(u), v)_{L^2(\partial\Sigma)}.
\end{align*}
Indeed, $u$ is called a weak solution of (\ref{generalRobin}) if, for all $v\in H^1(\Sigma)$,
\begin{align*}
(\mathcal{Q}u)v &=(-f,g)v
\end{align*}
as linear functionals. That is, 
\begin{align*}
Q(u,v) &=(-f,g)v= (-f,v)_{L^2(\Sigma)}+(g, Tv)_{L^2(\partial\Sigma)}.
\end{align*}

Towards a weak solution, the Lax-Milgram theorem guarantees it via the Riesz representation theorem if the bi-linear form is bounded and coercive (positive definiteness). It is clear that $Q(\cdot, \cdot)$ is bounded (by Cauchy-Schwarz inequalities and the trace theorem above) but not necessarily coercive. However, since $|p|$ is bounded and $H^1(\Sigma)\subset L^2(\Sigma)$, one can modify 
\[ Q_\gamma(u,v)=Q(u,v)+\gamma\<u,v\>_{L^2(\Sigma)}\]
such that $S_\gamma$ is coercive for some positive constant $\gamma$. That is, there is a positive constant $\beta$ such that
\[Q_\gamma(u,u)\geq \beta||u||^2_{H^1(\Sigma)}. \]
Essentially, $Q_\gamma(\cdot,\cdot)$ corresponds to the operator $\mathcal{Q}_\gamma=(J_\gamma, D(u)-qT(u))$ for $J_\gamma=\gamma-J$. Since $Q_\gamma(\cdot,\cdot)$ is bounded and coercive, $\mathcal{Q}_\gamma$ is an isomorphism from $H^1(\Sigma)$ to $(H^1)^\ast(\Sigma)$. 

We have, if $u$ is a solution of (\ref{generalRobin}) then 
\begin{align*}
\mathcal{Q}_\gamma u&= (-Ju +\gamma u, D(u)-qT(u)) \\
&=(-f+\gamma u, g).
\end{align*}
Thus,
\begin{align*}
u &=\gamma \mathcal{Q}_\gamma^{-1} (u, 0)+ \mathcal{Q}_\gamma^{-1}(-f, g)\\
&=Ku+f_1,\\
K &:=\gamma \mathcal{Q}_\gamma^{-1}(\cdot, 0)\\
f_1 &:= \mathcal{Q}_\gamma^{-1}(-f, g).
\end{align*}
Here, by the natural identification $L^2(\Sigma)\subset (H^1)^\ast(\Sigma)$ and $H^1(\Sigma)\subset L^2(\Sigma)$, 
\[K: L^2(\Sigma) \mapsto L^2(\Sigma).\]
Then, one observes that, for $Ku=v$
\begin{align*}
\mathcal{Q}_\gamma(v)&=\gamma (u,0),\\
\beta ||v||_{H^1(\Sigma)}^2 &\leq Q_\gamma(v,v)=\mathcal{Q}_\gamma v(v)\\
&=\gamma (u,0)(v) =\gamma(u,v)_{L^2(\Sigma)}\\
&\gamma \leq ||u||_{L^2(\Sigma)}||v||_{L^2(\Sigma)}\\
&\leq ||u||_{L^2(\Sigma)}||v||_{H^1(\Sigma)}.
\end{align*}
Thus,
\[ ||v||_{H^1(\Sigma)}=||Ku||_{H^1(\Sigma)}\leq C||u||_{L^2(\Sigma)}. \]
Due to the Rellich-Kondrachov compactness theorem, $K$ is a compact operator. So the abstract Fredholm alternative theorem is applicable and  can be translated to the following.

\begin{theorem}
	\label{FredholmRobin}
	For any $f\in L^2(\Sigma)$ and $g\in L^2(\partial\Sigma)$, either (\ref{generalRobin}) has a unique weak solution or the homogeneous problem (\ref{homRobin}) has a non-trivial space $N$ of weak solutions. Furthermore, if the latter holds, then $N$ has finite dimension and (\ref{generalRobin}) has a weak solution if and only if $(f,v)_{L^2(\Sigma)}=(g,v)_{L^2(\partial\Sigma)}$ for all $v\in N$.
\end{theorem}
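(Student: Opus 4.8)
The plan is to deduce Theorem \ref{FredholmRobin} as a direct translation of the Abstract Fredholm Alternative applied to the compact operator $K$ constructed in the preceding paragraphs. First I would recall what has already been assembled: the bilinear form $Q_\gamma$ is bounded and coercive for $\gamma$ large, so $\mathcal{Q}_\gamma\colon H^1(\Sigma)\to (H^1)^\ast(\Sigma)$ is an isomorphism; the operator $K=\gamma\,\mathcal{Q}_\gamma^{-1}(\cdot,0)$ maps $L^2(\Sigma)$ into $H^1(\Sigma)$ with the norm bound $\|Ku\|_{H^1}\le C\|u\|_{L^2}$, and hence by Rellich--Kondrachov is compact as an operator $L^2(\Sigma)\to L^2(\Sigma)$. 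The key reduction already recorded is that $u\in H^1(\Sigma)$ is a weak solution of (\ref{generalRobin}) if and only if $u=Ku+f_1$, i.e. $(\text{Id}-K)u=f_1$, where $f_1=\mathcal{Q}_\gamma^{-1}(-f,g)$; and the weak solutions of the homogeneous problem (\ref{homRobin}) are exactly the elements of $N:=\ker(\text{Id}-K)$.

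Next I would invoke the Abstract Fredholm Alternative for $K$. The dichotomy is immediate: either $\ker(\text{Id}-K)=\{0\}$, in which case $\text{Id}-K$ is onto and (\ref{generalRobin}) has a unique weak solution for every right-hand side; or $N=\ker(\text{Id}-K)$ is nontrivial, and then it is finite-dimensional (first bullet of the abstract theorem) and (\ref{generalRobin}) is solvable precisely when $f_1$ lies in $\operatorname{ran}(\text{Id}-K)$, which by the third bullet equals $N'^{\perp}$ where $N'=\ker(\text{Id}-K^\ast)$. By the fifth bullet $\dim N'=\dim N$, and since $K$ is self-adjoint (this is the one technical point to check: $Q$ is symmetric, so $\mathcal{Q}_\gamma$ is symmetric, hence $K=\gamma\mathcal{Q}_\gamma^{-1}(\cdot,0)$ is self-adjoint on $L^2(\Sigma)$ — verify via $(Ku,v)_{L^2}=\tfrac1\gamma Q_\gamma(Ku,Kv)=(u,Kv)_{L^2}$), we may take $N'=N$. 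So solvability of (\ref{generalRobin}) is equivalent to $(f_1,v)_{L^2(\Sigma)}=0$ for all $v\in N$.

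The last step is to rewrite the orthogonality condition $(f_1,v)_{L^2(\Sigma)}=0$ in the claimed geometric form $(f,v)_{L^2(\Sigma)}=(g,v)_{L^2(\partial\Sigma)}$. For $v\in N$ one has $\mathcal{Q}_\gamma v=\gamma(v,0)$ in $(H^1)^\ast(\Sigma)$, so testing against $f_1$ and using the defining pairing $Q_\gamma(f_1,v)=\mathcal{Q}_\gamma f_1(v)$ together with $\mathcal{Q}_\gamma f_1=\gamma(f_1,0)+\mathcal{Q}(f_1)$ and $\mathcal{Q}f_1=(-f,g)$ — equivalently pairing $\mathcal{Q}_\gamma^{-1}(-f,g)$ against $v$ by self-adjointness of $\mathcal{Q}_\gamma^{-1}$ — gives $\gamma(f_1,v)_{L^2(\Sigma)}=(-f,g)(\mathcal{Q}_\gamma^{-1}\gamma(v,0))=\gamma\,(-f,g)(v)=\gamma\big(-(f,v)_{L^2(\Sigma)}+(g,Tv)_{L^2(\partial\Sigma)}\big)$, using the explicit action $(-f,g)(v)=-(f,v)_{L^2(\Sigma)}+(g,Tv)_{L^2(\partial\Sigma)}$ established earlier. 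Dividing by $\gamma$ shows $(f_1,v)_{L^2(\Sigma)}=0$ iff $(f,v)_{L^2(\Sigma)}=(g,v)_{L^2(\partial\Sigma)}$, which is the asserted compatibility condition.

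I expect the main obstacle — really the only nonroutine point — to be bookkeeping the identifications: tracking when an element of $(H^1)^\ast(\Sigma)$ is being represented by an $L^2(\Sigma)$ function versus an $L^2(\partial\Sigma)$ function (through the natural inclusion $L^2(\Sigma)\subset (H^1)^\ast(\Sigma)$ and the trace/cotrace duality for the boundary term), and confirming that $K$ is genuinely self-adjoint on $L^2(\Sigma)$ rather than merely on $H^1(\Sigma)$, so that the Fredholm index statement can be phrased with the same space $N$ on both sides. Once those identifications are pinned down, the theorem is a line-by-line transcription of the abstract result, and no further estimates are needed.
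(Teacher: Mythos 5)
Your argument mirrors the paper's proof almost verbatim: reduce the Robin problem to $(\mathrm{Id}-K)u=f_1$, invoke the Abstract Fredholm Alternative, verify $K^\ast=K$ via the symmetry of $Q_\gamma$, and translate $(f_1,v)_{L^2(\Sigma)}=0$ into $(f,v)_{L^2(\Sigma)}=(g,v)_{L^2(\partial\Sigma)}$. There is a stray $\gamma$ in the displayed chain (one actually gets $\gamma(f_1,v)_{L^2(\Sigma)}=(-f,g)(v)$, not $\gamma(-f,g)(v)$), but since only the vanishing of $(f_1,v)_{L^2(\Sigma)}$ matters the conclusion is unaffected, and the paper itself is similarly cavalier about this factor.
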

\begin{proof}
	The only nontrivial part is to interpret $f_1$ in the orthogonal complement of the null space of $\text{Id}-K^\ast$. For $u, w\in L^2(\Sigma)$ let 
	\begin{align*}
	Ku &=\gamma \mathcal{Q}^{-1}_\gamma(u, 0) = x,\\
	\mathcal{Q}_\gamma (x) &=\gamma (u,0),\\
	Kw &=\gamma \mathcal{Q}^{-1}_\gamma(w, 0) = y,\\
	\mathcal{Q}_\gamma (y) &=\gamma (w,0).
	\end{align*}
	We have
	\begin{align*}
	(Ku, w)_{L^2(\Sigma)} &= (x, w)_{L^2(\Sigma)}\\
	&=(w,0)(x)=\frac{1}{\gamma}\mathcal{Q}_\gamma (y)(x)\\
	&=\frac{1}{\gamma}{Q}_\gamma (y,x)=\frac{1}{\gamma}\mathcal{Q}_\gamma x(y)\\
	&= (u,0)(y) =(u,y)_{L^2(\Sigma)}=(u,Kw)_{L^2(\Sigma)}.
	\end{align*}
	Thus, $K^\ast=K$. Then for $v$ in the null space of $\text{Id}-K^\ast=\text{Id}-K$ we have
	\begin{align*}
	v &=\gamma \mathcal{Q}_\gamma^{-1}(v,0),\\
	\mathcal{Q}_\gamma(v) &= \gamma (v,0),\\
	0 &= (f_1, v)_{L^2(\Sigma)}= (v,0)(f_1)\\
	&=	\mathcal{Q}_\gamma(v)(f_1) =S_\gamma(v, f_1)\\
	&=\mathcal{Q}f_1(v)=(-f,g)v\\
	&=-(f,v)_{L^2(\Sigma)}+(g,v)_{L^2(\partial\Sigma)}.
	\end{align*} 
	The result then follows. 
\end{proof}

\begin{remark} When there is no boundaries, Theorem \ref{FredholmRobin} just recovers the Fredholm alternative of an elliptic operator on a compact closed manifold. When $q=0$, it recovers the Fredholm alternative of an elliptic operator with a Neumann boundary condition.  
\end{remark}

\section{Morse index and Weak Morse index}
\label{closedcase}
 In this section, we consider a the area functional for two-sided hypersurfaces of codimension one with the enclosed volume constraint in an ambient manifold. Let $\Sigma\subset \Omega$ be a critical point. By Section \ref{prelim}, the constraint is realized as, for a test function $u$,
 \[\int_\Sigma u d\mu=0.\]
 Consequently, $\Sigma$ is characterized by having CMC. Due to the absence of boundaries,
the second variation becomes \cite{BCJ88}[Proposition 2.5]:
\begin{align*}
Q(u,v) &= \int_{\Sigma}\Big(\left\langle{\nabla u,\nabla v}\right\rangle-p uv\Big) d\mu,\\
p &:= \text{Rc}^{\Omega}(\nu,\nu)+|\A^\Sigma|^2. 
\end{align*}  
\begin{definition}
	\label{weakMIdef}
	The weak Morse index of the hypersurface $\Sigma^n\subset \Omega^{n+1}$ is the index of $Q(\cdot, \cdot)$ on $\mathfrak{F}=\{u\in H^1(\Sigma): \int_{\Sigma}u d\mu=0\}$. 
	\end{definition} 
Using integration by parts, one observe that
\begin{align*}
Q(u,v) &= \int_{\Sigma}\left\langle{\nabla u,\nabla v}\right\rangle-p uv d\mu\\
&= -\int_{\Sigma}v(\Delta+p)u d\mu\\
&=  -\int_{\Sigma}vJu d\mu.
\end{align*}
Here,
\[J:= \Delta+p\]
is the Jacobi operator. By PDE theory, the following eigenvalue problem 
\begin{equation*}
{J}u =-\lambda u \text{  on  } \Sigma.
\end{equation*}
has countable eigenvalues going to infinity. And $\text{MI}(Q)$ is exactly equal to the number of negative eigenvalues counting multiplicities. The weak Morse index is at most equal to $\text{MI}(Q)$ and is finite. We are ready to characterize that relation.

\begin{theorem}
	\label{app5}
	Let $\Sigma\subset \Omega$ be a closed, orientable, CMC hypersurface. Its weak Morse index is equal to $\text{MI}(Q)-1$ if and only if there is a smooth function $u$ such that 
	\begin{equation}
	\begin{cases}
	(\Delta+p)u &=-1 \text{ on } \Sigma,\\
	\int_{\Sigma}u &\leq 0. 
	\end{cases}
	\end{equation}
	Otherwise, it is equal to $\text{MI}(Q)$. 
\end{theorem}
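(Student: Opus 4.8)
The plan is to apply the abstract Morse index formula, Theorem \ref{abstractMorse}, with $H=H^1(\Sigma)$, the bilinear form $S=Q$, and a single linear functional $\phi$ chosen so that the codimension-one subspace $\ker\phi$ equals the constraint space $\mathfrak{F}=\{u : \int_\Sigma u\,d\mu = 0\}$. The natural choice is $\phi(v) = \int_\Sigma v\,d\mu$, i.e. $L^2$-pairing against the constant function $1$. With this identification, $\text{MI}^\phi(Q)$ is precisely the weak Morse index of Definition \ref{weakMIdef}, so the theorem will be proved once we check the hypotheses of Theorem \ref{abstractMorse} and translate its conclusion.

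First I would verify that $\phi$ is non-trivial (clear, since $\int_\Sigma 1\,d\mu = \text{vol}(\Sigma) > 0$) and that $\phi$ is \emph{not a pure limit point} of $\mathcal{Q}$. This is where the Fredholm theory of Subsection \ref{Fredholm} enters: in the boundaryless case (so $q\equiv 0$, $g$ absent), Theorem \ref{FredholmRobin} specializes to the Fredholm alternative for $J=\Delta+p$ on a closed manifold, which says $\text{ran}(\mathcal{Q})$ is closed, hence $\text{ran}(\mathcal{Q}) = \overline{\text{ran}(\mathcal{Q})}$ and there are no pure limit points at all. Alternatively one notes $\text{ran}(\mathcal{Q})$ is closed of finite codimension since $J$ is Fredholm. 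Either way the hypothesis of Theorem \ref{abstractMorse} holds automatically.

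Next I would unwind the criterion ``$\phi$ is $Q$-critical,'' i.e. the existence of $u\in H^1(\Sigma)$ with $\mathcal{Q}u = \phi$ and $\phi(u)\le 0$. By the computation in Subsection \ref{Fredholm}, $\mathcal{Q}u = \phi$ in the weak sense means exactly that $u$ is a weak solution of $(\Delta+p)u = -1$ on $\Sigma$ (the sign: $\mathcal{Q}u$ acts by $(-Ju,\cdot)_{L^2}$, so $\mathcal{Q}u=\phi$ reads $-Ju = 1$). Elliptic regularity upgrades any weak solution to a smooth one, matching the statement. The condition $\phi(u)\le 0$ is literally $\int_\Sigma u\,d\mu \le 0$. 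So $\phi$ is $Q$-critical precisely when the displayed system in the theorem is solvable, and Theorem \ref{abstractMorse} then gives weak index $= \text{MI}(Q)-1$ in that case and $\text{MI}(Q)$ otherwise.

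The only genuinely substantive point — and the one I would flag as the main obstacle — is confirming the ``not a pure limit point'' condition cleanly and handling the possibility that $(\Delta+p)u=-1$ is \emph{not} solvable (which occurs exactly when $0$ is an eigenvalue of $J$ with an eigenfunction not $L^2$-orthogonal to $1$); in that degenerate case the theorem's ``otherwise'' branch must be what applies, and I would double-check that the Fredholm alternative ($1 \perp \ker J$ being the solvability condition) is consistent with the dichotomy in Theorem \ref{abstractMorse}. Everything else is bookkeeping: identifying the constraint space with $\ker\phi$, the sign conventions in $\mathcal{Q}u = (-Ju,\dots)$, and invoking elliptic regularity. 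I would also remark that this recovers \cite{koise01} since the set-up there is the same closed CMC situation.
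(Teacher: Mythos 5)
Your proposal is correct and follows essentially the same route as the paper: identify the weak index with $\text{MI}^\phi(Q)$ for $\phi(v)=\int_\Sigma v$, check the hypothesis of Theorem \ref{abstractMorse}, translate $Q$-criticality into solvability of $(\Delta+p)u=-1$ with $\int_\Sigma u\le 0$, and invoke elliptic regularity. The one place where your argument diverges is the ``not a pure limit point'' step: you assert globally that $\text{ran}(\mathcal{Q})$ is closed, so that there are simply no pure limit points. The cleanest justification of that is not Theorem \ref{FredholmRobin} as stated (which concerns solvability for $L^2$-type data, not closedness of the range inside $(H^1)^*$), but rather the observation that $\mathcal{Q}=\mathcal{Q}_\gamma-\gamma\iota$ is a compact perturbation of the isomorphism $\mathcal{Q}_\gamma:H^1\to(H^1)^*$ (with $\iota$ compact by Rellich), hence Fredholm with closed range. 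The paper instead verifies the weaker statement that $\phi$ specifically is not a pure limit point, by a direct dichotomy: either $Ju=1$ has a weak solution and $\phi\in\text{ran}(\mathcal{Q})$, or $\ker\mathcal{Q}\ne 0$ and $\phi$ pairs nontrivially with some kernel element, so its Riesz representative is not $H^1$-orthogonal to $\ker\mathcal{Q}$. Your global claim is stronger and true, but if you keep it you should give the compact-perturbation argument rather than lean on Theorem \ref{FredholmRobin}; otherwise adopt the paper's pointwise check, which needs no extra machinery. Everything else in your proposal matches the paper's proof.
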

\begin{proof}
In this case, the weak index is just $\text{MI}^\phi(Q)$ with $\phi$ is the $L^2$-multiplication by the function $1$ (constantly equal to one). Let $\bar\phi$ be its correspondence via the Riesz representation theorem.  

\textbf{Claim:} $\phi$ is not a pure limit point. 

It is essentially equivalent to the Fredholm alterative associated with the elliptic operator $J=\Delta+p$; see the Remark following Theorem \ref{FredholmRobin}. Indeed, in case there is a weak solution to the equation $Ju=1$, then  $\phi\in \text{ran}(\mathcal{Q})$. If not, by the Fredholm alternative, the null space $\text{Ker}(\mathcal{Q})$ has a positive finite dimension and for some $v\in \text{Ker}(\mathcal{Q})$   
\[0\neq (1,v)_{L^2(\Sigma)}= \phi(v)=(\bar{\phi},v)_{H^1(\Sigma)}.\]
Thus, $\bar{\phi}$ is not $H^1$-perpendicular to $\text{Ker}(\mathcal{Q})$. Therefore, $\phi$ is not a pure limit point. 

The assumption of Theorem \ref{abstractMorse} is verified so we have 
\[ \text{MI}^\phi(Q)=\begin{cases}
\text{MI}(Q)-1 \text{   if $\phi$ is $Q$-critical}\\
\text{MI}(Q) \text{  otherwise.}
\end{cases}
\]
It remains to determine the $Q$-critical condition. In our case, $\mathcal{Q}(u)=(-Ju)$ as in Section \ref{Fredholm}. Thus, $Q$-critical is equivalent to the existence of a weak solution
\begin{equation*}
\begin{cases}
{-J}u &=1 \text{ on } \Sigma,\\
(1,u)_{L^2(\Sigma)} &\leq 0. 
\end{cases}
\end{equation*}
The regularity theory for an elliptic operator then asserts that the solution is smooth.  	
\end{proof}
   
In some cases, the Q-criticality can be determined from the geometry of the surface.  For instance, we will prove the following which says that the difference of the Morse index and weak Morse index is one for CMC surfaces with constant Gaussian curvature in a unit sphere.
\begin{corollary}
	\label{precise1dup}
	Let $\Sigma$ be a immersed CMC  hypersurface in $\mathbb{S}^{n+1}$  of constant scalar curvature. Then, its weak Morse index is equal to 
	\[\text{MI}(Q)-1.\]
\end{corollary}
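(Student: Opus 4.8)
By Theorem \ref{app5}, it suffices to exhibit a smooth function $u$ on $\Sigma$ with $(\Delta + p)u = -1$ and $\int_\Sigma u \, d\mu \le 0$. The plan is to produce such a $u$ explicitly from the ambient geometry of $\mathbb{S}^{n+1}$. For a CMC hypersurface $\Sigma^n \subset \mathbb{S}^{n+1}$ we have $p = \mathrm{Rc}^{\mathbb{S}^{n+1}}(\nu,\nu) + |\A|^2 = n + |\A|^2$, so $J = \Delta + n + |\A|^2$. The natural candidates for Jacobi-type fields are the restrictions of coordinate functions of the ambient $\mathbb{R}^{n+2}$: writing $\Sigma \hookrightarrow \mathbb{S}^{n+1} \subset \mathbb{R}^{n+2}$ and letting $a \in \mathbb{R}^{n+2}$ be constant, the functions $\langle X, a\rangle$ and $\langle \nu, a\rangle$ satisfy well-known Simons-type identities. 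In particular $\langle \nu, a\rangle$ is a Jacobi field ($J\langle\nu,a\rangle = 0$) and $J\langle X, a\rangle = (n + |\A|^2 - n)\langle X,a\rangle - \dots$; one checks that the combination giving $(\Delta + n + |\A|^2)$ applied to $\langle X, a\rangle$ is $H\langle\nu,a\rangle + |\A|^2\langle X,a\rangle$ or similar, which is not quite $-1$ but is a controlled expression.

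First I would assemble the exact second-order identities for $\langle X, a\rangle$ and $\langle X, \nu\rangle$-type quantities on a CMC hypersurface of $\mathbb{S}^{n+1}$ (the analogue of Lemma \ref{lem3.1} and Corollary \ref{lem3.1} in the spherical setting). The key extra input is the hypothesis of \emph{constant scalar curvature}: by the Gauss equation, constant scalar curvature together with constant $H$ forces $|\A|^2$ to be constant. Once $|\A|^2 \equiv c$ is a constant, $J = \Delta + (n + c)$ is a shifted Laplacian, and the equation $(\Delta + n + c)u = -1$ has the obvious constant solution $u = -\frac{1}{n+c}$ provided $n + c \neq 0$; and then $\int_\Sigma u = -\frac{|\Sigma|}{n+c} \le 0$ exactly when $n + c > 0$, which holds since $n + c = n + |\A|^2 \ge n > 0$. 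If $n + c = 0$ were possible it would force $n = 0$, absurd, so this case does not arise. Thus the $Q$-critical condition of Theorem \ref{app5} is satisfied with the explicit constant function, and the weak Morse index equals $\mathrm{MI}(Q) - 1$.

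The main step, then, is really the reduction: \textbf{constant $H$ plus constant scalar curvature implies constant $|\A|^2$}. I would carry this out via the Gauss equation for $\Sigma^n \subset \mathbb{S}^{n+1}$, which reads $\mathrm{Scal}^\Sigma = n(n-1) + H^2 - |\A|^2$ (with the sign conventions of the paper, tracing the second fundamental form to get $H$); hence $|\A|^2 = n(n-1) + H^2 - \mathrm{Scal}^\Sigma$ is constant as soon as $H$ and $\mathrm{Scal}^\Sigma$ are. This is elementary once the convention is pinned down. I would then note that no appeal to the full Simons identity or to any classification (Clifford tori, etc.) is needed — the result holds for \emph{every} such immersed hypersurface, which is why the statement is clean.

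The anticipated obstacle is bookkeeping rather than conceptual: I must be careful that the index form's potential $p = \mathrm{Rc}^\Omega(\nu,\nu) + |\A|^2$ uses the same normalization of $|\A|^2$ and $H$ as the Gauss equation I invoke, and that ``scalar curvature'' in the hypothesis matches the intrinsic scalar curvature appearing there (the remark in the excerpt about $n = 2$ reducing to Gauss curvature is a useful consistency check). Provided those conventions align, the proof is short: constant scalar curvature $\Rightarrow$ constant $|\A|^2$ $\Rightarrow$ $J$ is a shifted Laplacian $\Rightarrow$ the constant function $u = -1/(n+|\A|^2)$ solves $(\Delta+p)u = -1$ with $\int_\Sigma u < 0$ $\Rightarrow$ by Theorem \ref{app5}, the weak Morse index is $\mathrm{MI}(Q) - 1$.
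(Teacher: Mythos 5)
Your proof is correct, and it is actually cleaner than the paper's. Both arguments begin identically: use the Gauss equation $|\A|^2 = H^2 - S_\Sigma + n(n-1)$ to conclude that constant $H$ together with constant scalar curvature forces $p = n + |\A|^2$ to be a (positive) constant. The routes then diverge. The paper invokes the Fredholm alternative (Theorem \ref{FredholmRobin}) to argue abstractly that $Ju = -1$ always admits a solution — checking that, if $\ker(\Delta + p)$ is nontrivial, its elements are $L^2$-orthogonal to the constant function $1$ — and then integrates the equation over $\Sigma$ to deduce $\int_\Sigma u = -\mathrm{Area}(\Sigma)/p < 0$. You simply observe that, once $p$ is constant, the constant function $u = -1/p$ is an \emph{explicit} smooth solution of $(\Delta + p)u = -1$ with $\int_\Sigma u = -\mathrm{Area}(\Sigma)/p < 0$, so the $Q$-critical condition of Theorem \ref{app5} holds at once. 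Your argument bypasses the Fredholm machinery entirely; the paper's argument is no more general here, since it also uses constancy of $p$ in the step $\int_\Sigma p u = p\int_\Sigma u$. Everything else in your write-up — the sign conventions, the normalization check, and the conclusion $n + |\A|^2 \ge n > 0$ ruling out degeneracy — is sound, so this is a valid and slightly more economical proof of the corollary.
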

\begin{proof}
	For a CMC hypersurface in $\mathbb{S}^{n+1}$,
	\[J=\Delta+n+|\A|^2.\]
	For $S$ denoting the scalar curvature, we recall the following Gauss equation \cite[Chapter 1]{chowluni}
	\[S_\Sigma =S_{\mathbb{S}^3}-2\text{Rc}_{\mathbb{S}^3}(\nu, \nu)+H^2-|\A|^2.\]
	Therefore 
	\[|\A|^2= H^2-S_\Sigma+n(n-1).\]
	Since $S_\Sigma$ is constant, 
	\[p=n+|\A|^2=n^2+H^2-S=\text{constant}.\]
	
	We have, $\text{Ker}(\mathcal{Q})$ is the set of solutions 
	\[(\Delta+p)u=0.\]
	It is non-trivial if and only if $p>0$ is an eigenvalue of $\Delta$. In that case, since the constant function $1$ is the first eigenfunction of $\Delta$ with eigenvalue zero, for any $v\in \text{Ker}(\mathcal{Q})$  
	\[(1,v)_{L^2(\Sigma)}=0.\]
	By the Fredhom alternative, Theorem \ref{FredholmRobin}, there is always a solution of \[Ju=-1.\] 
	For such $u$, we have,
	\[\int_\Sigma pu d\mu=p\int_\Sigma u d\mu=\int_{\Sigma}(-1-\Delta u)d\mu=-\text{Area}(\Sigma)<0.\]
	The assertion then follows from Theorem \ref{app5}.
\end{proof}
\begin{remark} For $\Sigma^2\subset \mathbb{S}^{3}$, we can replace the scalar curvature by the intrinsic Gauss curvature $K$
\[|\A|^2= H^2-2(K-1)=2+H^2-2K.\]
\end{remark}

An almost identical analysis replacing $H^1(\Sigma)$ by $H^1_0(\Sigma)$ is applicable for the fixed boundary problem. The statement goes as follows. 
\begin{theorem}
	\label{fbpindex}
	Let $\Sigma\subset \Omega$ be a  CMC hypersurface with boundaries. Then its weak Morse index with respect to the fixed boundary problem is equal to $\text{MI}(Q)-1$ if and only if there is a smooth function $u$ such that
	\begin{equation}
	\begin{cases}
	{J}u &=-1 \text{ on } \Sigma,\\
	u &= 0 \text{ on } \partial \Sigma,\\
	\int_{\Sigma}u &\leq 0. 
	\end{cases}
	\end{equation}
	Otherwise, it is equal to $\text{MI}(Q)$.  
\end{theorem}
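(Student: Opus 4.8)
The plan is to rerun the argument of Theorem~\ref{app5} with the Hilbert space $H^1(\Sigma)$ replaced by $H=H^1_0(\Sigma)$, so that the Jacobi operator $J=\Delta+p$ is now taken with Dirichlet boundary conditions. For the fixed boundary problem, variations keep $\partial\Sigma$ fixed, so admissible test functions lie in $H^1_0(\Sigma)$; there the boundary term of the index form (\ref{indexform}) drops out and $Q(u,v)=\int_\Sigma(\langle\nabla u,\nabla v\rangle-puv)\,d\mu=-\int_\Sigma vJu\,d\mu$ for all $u,v\in H^1_0(\Sigma)$. The enclosed-volume constraint restricts further to $\mathfrak{F}_0=\{u\in H^1_0(\Sigma):\int_\Sigma u\,d\mu=0\}$, so the weak Morse index with respect to the fixed boundary problem is exactly $\text{MI}^\phi(Q)$, the index of $Q$ on $H^1_0(\Sigma)$ constrained by the single functional $\phi$ given by $L^2(\Sigma)$-multiplication by the constant function $1$. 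Since $\partial\Sigma\ne\emptyset$, this $\phi$ is a nonzero continuous linear functional on $H^1_0(\Sigma)$, the abstract map associated to $Q$ on $H^1_0(\Sigma)$ is $\mathcal{Q}u=(-Ju)$ with the Dirichlet condition built into the domain, and $\text{MI}(Q)$ is understood throughout as the index of $Q$ on $H^1_0(\Sigma)$, i.e. the number of negative Dirichlet eigenvalues of $J$.

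The only analytic point is that $\phi$ is not a pure limit point, which follows from the Fredholm alternative for the Dirichlet problem $Ju=f$ on $\Sigma$, $u|_{\partial\Sigma}=0$ — a standard fact of elliptic theory \cite{evans10}, formally identical to Theorem~\ref{FredholmRobin} once the boundary trace term is suppressed by working in $H^1_0(\Sigma)$. If $Ju=1$, $u|_{\partial\Sigma}=0$ has a weak solution then $\phi\in\text{ran}(\mathcal{Q})$; otherwise the homogeneous Dirichlet kernel $N=\text{Ker}(\mathcal{Q})$ is nontrivial and finite-dimensional, and non-solvability produces $v\in N$ with $(1,v)_{L^2(\Sigma)}\ne 0$, hence $\phi(v)=(\bar\phi,v)_{H^1(\Sigma)}\ne 0$, so $\bar\phi$ is not $H^1$-orthogonal to $\text{Ker}(\mathcal{Q})$. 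In either case $\phi$ is not a pure limit point, so the hypotheses of Theorem~\ref{abstractMorse} are met.

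Theorem~\ref{abstractMorse} then gives $\text{MI}^\phi(Q)=\text{MI}(Q)-1$ when $\phi$ is $Q$-critical and $\text{MI}^\phi(Q)=\text{MI}(Q)$ otherwise, so it remains to unwind $Q$-criticality. With $\mathcal{Q}u=(-Ju)$ on $H^1_0(\Sigma)$, it means there is $u\in H^1_0(\Sigma)$ with $-Ju=1$ weakly and $\phi(u)=\int_\Sigma u\,d\mu\le 0$, i.e. $Ju=-1$ on $\Sigma$, $u=0$ on $\partial\Sigma$, $\int_\Sigma u\le 0$; elliptic regularity up to the boundary upgrades such a weak $u$ to a smooth one, which is exactly the dichotomy in the statement. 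I do not expect a genuine obstacle — the argument of Theorem~\ref{app5} transfers essentially verbatim; the only things worth stating carefully are that $1$ induces a genuinely nonzero functional on $H^1_0(\Sigma)$ (true because $\partial\Sigma\ne\emptyset$) and that the ambient $\text{MI}(Q)$ in the statement is the fixed-boundary index, i.e. the number of negative Dirichlet eigenvalues of $J$.
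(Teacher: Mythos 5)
Your proposal is correct and follows the same route the paper takes for Theorem~\ref{fbpindex}: replace $H^1(\Sigma)$ by $H^1_0(\Sigma)$ in the argument of Theorem~\ref{app5}, use the Dirichlet Fredholm alternative in place of the closed-manifold one to rule out the pure limit point case, then apply Theorem~\ref{abstractMorse} and elliptic regularity. You have in effect supplied the details the paper leaves implicit when it states this as an ``almost identical analysis.''
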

\begin{remark}
	This is a generalization of results from \cite{koise01} and \cite{souam19}.
\end{remark}

\section{Type-I Partitioning}
\label{sec2}

In this section, we study the Morse index associated with the type I-partitioning problem. By the terminology of Section \ref{prelim}, $X:\Sigma^n \mapsto \Omega^{n+1}$, is a type $I$ partitioning if it is the critical point of the energy functional $E$ while preserving prescribed volume $V$.

\begin{definition}
	A variation $Y$ is volume-preserving if 
	\[\int_{\Sigma}\left\langle{\nu,Y}\right\rangle d\mu=0.\]
\end{definition}

It is clear from the first variation computation that $\Sigma$ is a critical point if and only if it has constant mean curvature $H$ and the projection of $\eta$ onto the tangent bundle of $\partial\Omega$ is exactly equal to $(\cos{\th})\bar{\nu}$. That is, $H$ is constant and $\Sigma$ intersects $\partial \Omega$ at constant angle $\th$. So critical points are capillary hypersurfaces. It is also noted that for each smooth function $u$ on $\Sigma$ with $\int_\Sigma ud\mu=0$, there exists an admissible volume-preserving variation $Y$ of $\Sigma$ with $u=\left\langle{Y, \nu}\right\rangle$ \cite{RS97}.
\begin{definition}
	\label{typeIMIdef}
	The Type-I (capillary) Morse index of the hypersurface $\Sigma^n\subset \Omega^{n+1}$ is the index of $Q(\cdot, \cdot)$ on $\mathfrak{F}:=\left\{u\in H^1(\Sigma);  \int_\Sigma u d\mu=0\right\}$.  
\end{definition}

A capillary hypersurface is called type-I stable if its type I index is zero. That is, 
 \begin{eqnarray*}
	Q(u, u)\ge 0,\qquad \forall u\in \mathfrak{F}:=\left\{u\in H^1(\Sigma);  \int_\Sigma u d\mu=0\right\}.
\end{eqnarray*}

 The type I (capillary) Morse index is at most equal to $\text{MI}(Q)$ and is finite. We are ready to exactly determine their difference.

\begin{theorem}
	\label{app2dup}
	Let $\Sigma\subset \Omega$ be a capillary hypersurface. Then the type-I Morse index is equal to $\text{MI}(Q)-1$ if and only if there is a smooth function $u$ such that 
	\begin{equation*}
	\begin{cases}
	(\Delta+p)u &=-1 \text{ on } \Sigma,\\
	\nabla_\eta u &=q u \text{ on } \partial \Sigma,\\
	\int_{\Sigma}u &\leq 0. 
	\end{cases}
	\end{equation*}
	Otherwise, it is equal to $\text{MI}(Q)$.
\end{theorem}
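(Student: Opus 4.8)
The plan is to apply the abstract Morse index theorem, Theorem \ref{abstractMorse}, with $H = H^1(\Sigma)$, the bilinear form $S = Q$ given by \eqref{indexform}, and the linear functional $\phi$ defined by $L^2(\Sigma)$-integration against the constant function $1$, i.e. $\phi(v) = \int_\Sigma v\, d\mu$. By Definition \ref{typeIMIdef}, the type-I Morse index is exactly $\text{MI}^\phi(Q)$, since $\mathfrak{F} = \text{Ker}(\phi)$. To invoke Theorem \ref{abstractMorse} we must check two things: first, that $\phi$ is not a pure limit point of $\text{ran}(\mathcal{Q})$; second, we must translate the $Q$-critical condition ``there is $u$ with $\mathcal{Q}u = \phi$ and $\phi(u) \le 0$'' into the PDE system in the statement.

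First I would verify the pure-limit-point hypothesis. As recorded in Section \ref{Fredholm}, the bilinear form $Q$ on $H^1(\Sigma)$ induces the map $\mathcal{Q}u = (-Ju,\, D_\eta u - qT(u))$ acting by $L^2$-pairing on $\Sigma$ and on $\partial\Sigma$, and the relevant Fredholm alternative is Theorem \ref{FredholmRobin}. The functional $\phi$ corresponds to the pair $(1, 0)$ (interior datum $1$, boundary datum $0$). If the inhomogeneous Robin problem $Ju = -1$, $D_\eta u - qu = 0$ has a weak solution, then $\phi \in \text{ran}(\mathcal{Q})$ and there is nothing to check. If not, Theorem \ref{FredholmRobin} gives a nontrivial finite-dimensional null space $N$ of the homogeneous problem \eqref{homRobin} together with some $v \in N$ for which the compatibility condition fails, i.e. $(1,v)_{L^2(\Sigma)} - (0,v)_{L^2(\partial\Sigma)} = \int_\Sigma v \, d\mu \ne 0$; hence $\bar\phi$ (the Riesz representative of $\phi$ in $H^1(\Sigma)$) is not $H^1$-orthogonal to $\text{Ker}(\mathcal{Q})$, which is precisely what rules out $\phi$ being a pure limit point. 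This is the same argument as in the proof of Theorem \ref{app5}, now carried out with Robin rather than empty boundary conditions.

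With the hypothesis of Theorem \ref{abstractMorse} satisfied, the conclusion reads: the type-I index equals $\text{MI}(Q) - 1$ if $\phi$ is $Q$-critical and $\text{MI}(Q)$ otherwise. It remains to unwind $Q$-criticality. By definition this means there exists $u \in H^1(\Sigma)$ with $\mathcal{Q}u = \phi$ and $\phi(u) \le 0$. Since $\mathcal{Q}u = (-Ju,\, D_\eta u - qT(u))$ and $\phi = (1, 0)$, the equation $\mathcal{Q}u = \phi$ is the weak formulation of $-Ju = 1$ on $\Sigma$ together with $D_\eta u - qu = 0$ on $\partial\Sigma$, that is $(\Delta + p)u = -1$ and $\nabla_\eta u = qu$; and $\phi(u) \le 0$ is exactly $\int_\Sigma u \, d\mu \le 0$. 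Finally, elliptic regularity for the Robin problem (interior smoothness plus boundary regularity, since $p, q$ are smooth) upgrades the weak solution $u$ to a smooth one, so the three displayed conditions hold for a genuine smooth function. Conversely a smooth solution of that system is in particular a weak solution witnessing $Q$-criticality.

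I expect the main obstacle to be the verification that $\phi$ is not a pure limit point, since this is where the Fredholm theory with Robin boundary data (Theorem \ref{FredholmRobin}) must be invoked carefully; the rest is a direct application of Theorem \ref{abstractMorse} together with the identification of $\mathcal{Q}$ from Section \ref{Fredholm} and routine elliptic regularity. Everything else — matching $\mathfrak{F}$ with $\text{Ker}(\phi)$, reading off the PDE system from $\mathcal{Q}u = \phi$, and the regularity bootstrap — is bookkeeping.
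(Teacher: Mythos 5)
Your proposal follows exactly the same route as the paper's proof: identify the type-I index with $\text{MI}^\phi(Q)$ for $\phi(v)=\int_\Sigma v\,d\mu$, verify via the Robin Fredholm alternative (Theorem \ref{FredholmRobin} with $f=1$, $g=0$) that $\phi$ is not a pure limit point, apply Theorem \ref{abstractMorse}, unwind $Q$-criticality into the stated Robin system, and invoke Schauder regularity for smoothness. It is correct and matches the paper's argument in all essentials.
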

\begin{proof} [Proof of Theorem \ref{app2}]
	
The capillary Morse index is equal to $\text{MI}^\phi(Q)$ on $H_1$ where $\phi$ is the functional 
	\[\phi(u)=(1, u)_{L^2(\Sigma)}.\] 
	Let $\bar\phi$ be its Riesz representation. 
	
	\textbf{Claim:} $\phi$ is not a pure limit point. 
	
	We apply Theorem \ref{FredholmRobin} for $f=1$, $g=0$. In case there is a weak solution, then  $\phi\in \text{ran}(\mathcal{Q})$. If not, the null space $\text{Ker}(\mathcal{Q})$ is non-trivial and has finite dimension and for some $v\in \text{Ker}(\mathcal{Q})$   
	\[(\bar{\phi},v)_{H^1(\Sigma)}=\phi(v)=(f,v)_{L^2(\Sigma)}\neq 0.\]
	Thus, $\bar{\phi}$ is not $H^1$-perpendicular to $\text{Ker}(\mathcal{Q})$. Therefore, $\phi$ is not a pure limit point. 
	
	The assumption of Theorem \ref{abstractMorse} is verified so we have 
	\[ \text{MI}^\phi(Q)=\begin{cases}
	\text{MI}(Q)-1 \text{   if $\phi$ is $Q$-critical}\\
	\text{MI}(Q) \text{  otherwise.}
	\end{cases}
	\]
	It remains to determine the $Q$-critical condition. In our case, $\mathcal{Q}(u)=(-Ju, Du-qu)$ as in Section \ref{Fredholm}. Thus, $Q$-critical is equivalent to the existence of a weak solution
	\begin{equation*}
	\begin{cases}
	{-J}u &=1 \text{ on } \Sigma,\\
	D u &= qu \text{ on } \partial \Sigma,\\
	(1,u)_{L^2(\Sigma)} &\leq 0. 
	\end{cases}
	\end{equation*}
	The Schauder's regularity theory for elliptic boundary-value problems then asserts that the solution is smooth.  
\end{proof}
Furthemore, combining Theorem \ref{app1} and Theorem \ref{indexdecompose} yields the following.
\begin{corollary}
	\label{main1}
	Let $\Sigma\subset \Omega$ be a capillary surface and $q\geq 0$, $q\not\equiv 0$. Then its type I Morse index is equal to 
	\[ a+b+c.\]
	Here, $a$ is the number of non-positive eigenvalues of (\ref{FBP}) counting multiplicity; $b$ is the number of eigenvalues smaller than $1$ of (\ref{Jacobi-Steklov}) counting multiplicity. Finally, $c=-1$ if there is a smooth function $u$ such that 
	\begin{equation}
	\begin{cases}
	{J}u &=-1 \text{ on } \Sigma,\\
	\nabla_\eta u &= qu \text{ on } \partial \Sigma,\\
	\int_{\Sigma}u &\leq 0. 
	\end{cases}
	\end{equation}
	Otherwise, $c=0$. 
\end{corollary}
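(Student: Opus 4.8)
The plan is to chain together the two results already in hand. First I would invoke Theorem \ref{app1} (equivalently Theorem \ref{app2dup}): for a capillary hypersurface $\Sigma \subset \Omega$, the type-I Morse index equals $\text{MI}(Q) - 1$ exactly when the mixed boundary value problem
\[
\begin{cases}
(\Delta + p)u = -1 & \text{on } \Sigma,\\
\nabla_\eta u = qu & \text{on } \partial\Sigma,\\
\int_\Sigma u \leq 0
\end{cases}
\]
admits a smooth solution, and equals $\text{MI}(Q)$ otherwise. Setting $c = -1$ in the first case and $c = 0$ in the second---which is precisely how $c$ is defined in the statement---this reads: the type-I Morse index equals $\text{MI}(Q) + c$.

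Second, I would apply Theorem \ref{indexdecompose} to split $\text{MI}(Q)$. Its hypotheses are that $\Sigma$ is a compact Riemannian manifold with boundary and that $q \geq 0$, $q \not\equiv 0$, which are exactly the standing assumptions of the corollary. It gives $\text{MI}(Q) = a + b$, where $a$ counts the non-positive eigenvalues of the fixed-boundary problem (\ref{FBP}) and $b$ counts the eigenvalues of the Jacobi-Steklov problem (\ref{Jacobi-Steklov}) strictly below $1$, both with multiplicity. Combining the two displays gives that the type-I Morse index equals $a + b + c$, as claimed.

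I do not expect any genuine obstacle: the corollary is a formal consequence of the two cited theorems, and the only points to verify are bookkeeping ones---that the sign hypothesis $q \geq 0$, $q \not\equiv 0$ required by Theorem \ref{indexdecompose} is supplied by the corollary's hypothesis (Theorem \ref{app1} itself imposes no condition on $q$), and that the elliptic regularity built into Theorem \ref{app1} justifies taking the function $u$ defining $c$ to be smooth rather than merely a weak solution. It is also worth noting that each of $a$, $b$, $c$ is in principle computable from the geometry of $\Sigma$, which is the practical payoff of this decomposition.
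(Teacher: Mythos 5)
Your proposal is correct and follows exactly the paper's own route: the paper's proof is the one-line statement that the corollary follows by combining Theorem~\ref{app2dup} with Theorem~\ref{indexdecompose}, which is precisely the chaining you carry out. The bookkeeping remarks you add about the hypothesis on $q$ and about elliptic regularity are accurate but not needed beyond what the cited theorems already supply.
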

\begin{proof}
	It follows from Theorem \ref{app2dup} and Theorem \ref{indexdecompose}.	 
\end{proof}
\section{Type-II Partitioning}
In this section, we investigate Type II Morse indices for stationary hypersurfaces. The setup of type II partitioning is similar to type I. The difference is that, instead of preserving prescribed volume, type II preserves prescribed wetting area. Following \cite{GX20}, we have the following definitions. 
\begin{definition}
	A variation $Y$ is wetting-area-preserving if 
	\[\int_{\partial\Sigma}\left\langle{\bar{\nu},Y}\right\rangle d\mu=0.\]
\end{definition}
\begin{definition}
	$\Sigma$ is a stationary point of type II partitioning if $E'(0)=0$ for all wetting-area-preserving variations. 
\end{definition}
From our variational formulae in Section \ref{prelim}, it is clear that a type-II stationary surface is minimal and meets the boundary at a constant angle $\th$. For $Y=Y_0+u \nu$, with $Y_0$ tangential, it is observed that the wetting-area-preserving is equivalent to
\[\int_{\partial\Sigma}u ds=0.\]

\begin{definition}
	\label{typeIIMIdef}
	The Type-II Morse index of the hypersurface $\Sigma^n\subset \Omega^{n+1}$ is the index of $Q(\cdot, \cdot)$ on $\mathfrak{G}=\{u\in H^1(\Sigma): \int_{\partial \Sigma}u d\mu=0\}$.    
\end{definition}

Since the index form is the same as type I, the type II Morse index is at most $\text{MI}(Q)$ and is finite. We are ready to determine it precisely. 

\begin{theorem}
	\label{app3dup}
	Let $\Sigma\subset \Omega$ be a stationary hypersurface of type II partitioning. Then its type-II Morse index is equal to $\text{MI}(Q)-1$ if and only if there is a smooth function $u$ such that 
	\begin{equation*}
	\begin{cases}
	{(\Delta+p)}u &=0 \text{ on } \Sigma,\\
	\nabla_\eta u -qu&= 1 \text{ on } \partial \Sigma,\\
	\int_{\Sigma}u &\leq 0. 
	\end{cases}
	\end{equation*}
	Otherwise, it is equal to $\text{MI}(Q)$.
\end{theorem}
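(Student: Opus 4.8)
The plan is to follow exactly the template of Theorems \ref{app5} and \ref{app2dup}, but now with the functional $\phi$ encoding the \emph{type-II} constraint, namely $\phi(u)=\int_{\partial\Sigma}u\,ds=(1,Tu)_{L^2(\partial\Sigma)}$. The type-II Morse index is by Definition \ref{typeIIMIdef} the index of $Q(\cdot,\cdot)$ on $\mathfrak{G}=\operatorname{Ker}\phi\subset H^1(\Sigma)$, i.e. $\text{MI}^\phi(Q)$. As before, the first task is to check that $\phi$ is not a pure limit point of $\mathcal{Q}$. This is the Fredholm alternative of Theorem \ref{FredholmRobin} applied with $f=0$ and $g=1$: if the inhomogeneous Robin problem $Ju=0$ on $\Sigma$, $\nabla_\eta u-qu=1$ on $\partial\Sigma$ has a weak solution then $\phi=\mathcal{Q}u\in\operatorname{ran}(\mathcal{Q})$; otherwise $N=\operatorname{Ker}(\mathcal{Q})$ is non-trivial, finite-dimensional, and for $v\in N$ the pairing $(\bar\phi,v)_{H^1(\Sigma)}=\phi(v)=(g,Tv)_{L^2(\partial\Sigma)}$ is not identically zero (again since a weak solution of the inhomogeneous problem exists iff $(f,v)_{L^2(\Sigma)}=(g,v)_{L^2(\partial\Sigma)}$ for all $v\in N$, and here $f=0$, $g=1$ cannot be orthogonal to all of $N$ unless $N$ were forced trivial, which the non-existence case contradicts). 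Hence $\bar\phi$ is not $H^1$-perpendicular to $N$, so $\phi$ is not a pure limit point and Theorem \ref{abstractMorse} applies.

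Theorem \ref{abstractMorse} then gives $\text{MI}^\phi(Q)=\text{MI}(Q)-1$ if $\phi$ is $Q$-critical and $\text{MI}(Q)$ otherwise, where $Q$-critical means there exists $u\in H^1(\Sigma)$ with $\mathcal{Q}u=\phi$ and $\phi(u)\le 0$. The next step is to translate the abstract condition $\mathcal{Q}u=\phi$ into the stated PDE. Recall from Subsection \ref{Fredholm} that $\mathcal{Q}u=(-Ju,\ Du-qTu)$ acting by the $L^2$-inner product on $\Sigma$ and on $\partial\Sigma$ respectively, while $\phi$ acts as $(0,\ 1)$, i.e. $\phi(v)=0\cdot v$ on the interior plus $(1,Tv)_{L^2(\partial\Sigma)}$ on the boundary. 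Thus $\mathcal{Q}u=\phi$ is exactly the weak form of
\begin{equation*}
\begin{cases}
{Ju} &=0 \text{ on } \Sigma,\\
\nabla_\eta u - qu &= 1 \text{ on } \partial\Sigma.
\end{cases}
\end{equation*}
The remaining condition $\phi(u)\le 0$ is $\int_{\partial\Sigma}u\,ds\le 0$; but by integrating $Ju=0$ against the constant $1$ (divergence theorem) one gets $0=\int_\Sigma Ju=\int_\Sigma(\Delta u+pu)=\int_{\partial\Sigma}\nabla_\eta u+\int_\Sigma pu$, and using the boundary relation $\nabla_\eta u=qu+1$ this becomes $\int_{\partial\Sigma}(qu+1)\,ds+\int_\Sigma pu\,d\mu=0$. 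This does \emph{not} immediately reduce $\int_{\partial\Sigma}u\,ds\le 0$ to $\int_\Sigma u\le 0$ in general; however, the statement of the theorem phrases the criticality inequality as $\int_\Sigma u\le 0$, so the careful point is to verify that for the specific operator $\mathcal{Q}$ of this variational problem the natural normalization of $u$ (unique up to elements of $N$) makes $\phi(u)$ and $\int_\Sigma u$ simultaneously signable, or, more precisely, that the hypothesis in the theorem as written is the one that Theorem \ref{abstractMorse} delivers after the standard identification. I will check this by the same bookkeeping used in Theorem \ref{app2dup}: express $Q(u,u)=-\int_\Sigma uJu+\int_{\partial\Sigma}(\nabla_\eta u-qu)u=\int_{\partial\Sigma}u\,ds=\phi(u)$, so that $\phi$ being $Q$-critical ($\phi(u)\le 0$) is equivalent to $Q(u,u)\le 0$ for a solution $u$; then the stated inequality $\int_\Sigma u\le 0$ is recorded as the normalization condition arising from the same computation on the companion function, exactly parallel to how $\int_\Sigma u\le 0$ appears in Theorems \ref{app5} and \ref{app2dup}. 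Finally, Schauder regularity for elliptic boundary-value problems upgrades the weak solution to a smooth one.

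The main obstacle I anticipate is precisely the last point: reconciling the abstract inequality $\phi(u)\le 0$ (a boundary integral) with the interior integral $\int_\Sigma u\le 0$ appearing in the theorem statement. Resolving it should come down to the identity $Q(u,u)=\phi(u)=\int_{\partial\Sigma}u\,ds$ together with the divergence identity above relating $\int_{\partial\Sigma}u\,ds$, $\int_\Sigma pu$, and $|\partial\Sigma|$; under the running assumptions on $p$ and $q$ (or possibly after an innocuous shift by a Jacobi field in $N$) these combine to show the two sign conditions are equivalent. Everything else is a direct transcription of the proof of Theorem \ref{app2dup} with $(f,g)=(0,1)$ in place of $(f,g)=(1,0)$.
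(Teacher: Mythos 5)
Your proposal follows the paper's proof essentially verbatim: identify the type-II index as $\text{MI}^{\varphi}(Q)$ for $\varphi(u)=(1,Tu)_{L^2(\partial\Sigma)}$, use Theorem \ref{FredholmRobin} with $(f,g)=(0,1)$ to rule out $\varphi$ being a pure limit point, invoke Theorem \ref{abstractMorse}, translate $\mathcal{Q}u=\varphi$ into the Robin problem $Ju=0$, $\nabla_\eta u-qu=1$, and finish with Schauder regularity. That part is correct.

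The one place you diverge is the final paragraph, where you try to reconcile the abstract criticality condition $\varphi(u)=\int_{\partial\Sigma}u\,ds\le 0$ with the condition $\int_{\Sigma}u\le 0$ printed in the theorem statement. Your instinct that these are not equivalent in general is right, and your proposed fix (combining $Q(u,u)=\varphi(u)$ with the divergence identity, or shifting by a Jacobi field) does not actually close the gap --- nor can it, since no such identity forces the two integrals to have the same sign. The resolution is that the paper's own proof derives exactly the condition $(1,u)_{L^2(\partial\Sigma)}\le 0$ and stops there; the $\int_{\Sigma}u\le 0$ in the statement is evidently a misprint for $\int_{\partial\Sigma}u\,ds\le 0$ (consistent with the type-II constraint being a boundary average). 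So you should simply assert the boundary-integral condition as the conclusion rather than attempting an equivalence that does not hold.
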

\begin{proof}
The type II Morse index is equal to $\text{MI}^\varphi(Q)$ on $H_1$ where $\varphi$ is the functional 
	\[\varphi(u)=(1, u)_{L^2(\partial\Sigma)}.\] 
	Let $\bar{\varphi}\in H^1(\Sigma)$ be its Riesz representation. 
		
	\textbf{Claim:} $\varphi$ is not a pure limit point. 
	
	We apply Theorem \ref{FredholmRobin} for $f=0$, $g=1$. In case there is a weak solution then, clearly, $\varphi\in \text{ran}(\mathcal{Q})$. If not, the null space $\text{Ker}(\mathcal{Q})$ is non-trivial and has finite dimension and for some $v\in \text{Ker}(\mathcal{Q})$   
	\[(\bar{\varphi},v)_{H^1(\Sigma)}=\varphi(v)=(g,v)_{L^2(\partial\Sigma)}\neq 0.\]
	Thus, $\bar{\varphi}$ is not $H^1$-perpendicular to $\text{Ker}(\mathcal{Q})$. Therefore, $\varphi$ is not a pure limit point. 
	
	The assumption of Theorem \ref{abstractMorse} is verified so we have 
	\[ \text{MI}^\varphi(Q)=\begin{cases}
	\text{MI}(Q)-1 \text{   if $\varphi$ is $Q$-critical}\\
	\text{MI}(Q) \text{  otherwise.}
	\end{cases}
	\]
	
	It remains to determine the $Q$-critical condition. In our case, $\mathcal{Q}(u)=(-Ju, Du-qu)$ as in Section \ref{Fredholm}. Thus, $Q$-critical is equivalent to the existence of a weak solution
	\begin{equation*}
	\begin{cases}
	{-J}u &=0 \text{ on } \Sigma,\\
	D u -qu&= 1 \text{ on } \partial \Sigma,\\
	(1,u)_{L^2(\partial\Sigma)} &\leq 0. 
	\end{cases}
	\end{equation*}
	The Schauder's regularity theory for elliptic boundary-value problems then asserts that the solution is smooth. 
\end{proof}
\begin{corollary}
	\label{indextypeII}
	Let $\Sigma\subset \Omega$ be a critical point of type II partitioning. Then its type II Morse index is equal to 
	\[ a+b+c.\]
	Here, $a$ is the number of non-positive eigenvalues of (\ref{FBP}) counting multiplicity; $b$ is the number of eigenvalues smaller than $1$ of (\ref{Jacobi-Steklov}) counting multiplicity. Finally, $c=-1$ if there is a smooth function $u$ such that 
	\begin{equation}
	\begin{cases}
	{J}u &=0 \text{ on } \Sigma,\\
	\nabla_\eta u -u&= 1 \text{ on } \partial \Sigma,\\
	\int_{\Sigma}u &\leq 0. 
	\end{cases}
	\end{equation}
	Otherwise, $c=0$. 
\end{corollary}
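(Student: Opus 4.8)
The statement to prove is Corollary \ref{indextypeII}, which combines Theorem \ref{app3dup} (the type-II version of the main dichotomy) with Theorem \ref{indexdecompose} (the decomposition of $\text{MI}(Q)$ into a fixed-boundary count $a$ and a Jacobi--Steklov count $b$). The plan is to simply chain these two results together, exactly as in the proof of Corollary \ref{main1} for the type-I case.

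First I would invoke Theorem \ref{app3dup}: the type-II Morse index equals $\text{MI}(Q)-1$ when the displayed boundary-value problem (with the $\int_\Sigma u \le 0$ sign condition) admits a smooth solution, and equals $\text{MI}(Q)$ otherwise. Naming the correction term $c$ — so $c=-1$ in the first case and $c=0$ in the second — this says the type-II Morse index is $\text{MI}(Q)+c$. Then I would apply Theorem \ref{indexdecompose}, whose hypotheses $q\ge 0$, $q\not\equiv 0$ are in force by assumption, to write $\text{MI}(Q)=a+b$ with $a$ the number of non-positive eigenvalues of the Dirichlet problem \eqref{FBP} and $b$ the number of eigenvalues of the Jacobi--Steklov problem \eqref{Jacobi-Steklov} below $1$. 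Substituting gives the type-II Morse index $=a+b+c$, which is the claim. The PDE system appearing in the definition of $c$ is precisely the $Q$-critical condition extracted in the proof of Theorem \ref{app3dup} (note $J=\Delta+p$, so $Ju=0$ matches $(\Delta+p)u=0$, and $\nabla_\eta u - u = 1$ on $\partial\Sigma$ matches $Du-qu=1$ in that setting — here the relevant $q$ from the index form is the one making $Du - qu$ read as $\nabla_\eta u - u$, consistent with the minimal capillary case).

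There is essentially no obstacle: the corollary is a formal consequence of two previously established theorems, and the only thing to be careful about is bookkeeping — matching the operator $J$ with $\Delta + p$, matching the boundary operator in \eqref{homRobin}/\eqref{generalRobin} with $\nabla_\eta - q$, and confirming that the linear functional $\varphi(u) = (1,u)_{L^2(\partial\Sigma)}$ used in Theorem \ref{app3dup} is indeed covered (it is, since it is shown there not to be a pure limit point, which is what Theorem \ref{indexdecompose}'s framework and Theorem \ref{abstractMorse} require). Thus the proof is one line:

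\begin{proof}
	It follows immediately from Theorem \ref{app3dup} and Theorem \ref{indexdecompose}.
\end{proof}

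If a referee wanted more detail, I would spell out the substitution: by Theorem \ref{app3dup} the type-II index is $\text{MI}(Q)+c$ with $c\in\{-1,0\}$ as described, and since $q\ge 0$, $q\not\equiv 0$, Theorem \ref{indexdecompose} gives $\text{MI}(Q)=a+b$; adding yields $a+b+c$.
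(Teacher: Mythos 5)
Your proof is correct and coincides with the paper's own one-line argument, which simply chains Theorem \ref{app3dup} with Theorem \ref{indexdecompose}. You also usefully flag that the hypotheses $q\ge 0$, $q\not\equiv 0$ of Theorem \ref{indexdecompose} must implicitly be in force (the paper states them in the analogous Corollary \ref{main1} but omits them here), and that the displayed boundary condition $\nabla_\eta u - u = 1$ should match the general condition $\nabla_\eta u - qu = 1$ coming from Theorem \ref{app3dup} rather than being restricted to $q=1$.
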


\begin{proof}
	It follows from Theorem \ref{app3dup} and Theorem \ref{indexdecompose}.	 
\end{proof}

\section{Capillary hypersurfaces in a Euclidean ball}
\label{capillaryEuclidean}
In this section, we study capillary hypersurfaces in a Euclidean ball. We introduce the following generalization of type I and type II constraints, called type I+II. It essentially corresponds to the partitioning of a convex body while preserving both the wetting area and enclosed volume. Minimizers of such problem will be studied somewhere else. Here, we focus on the index estimate. A corollary is that we streamline the type I and type II stability results of \cite{WX19} and \cite{GX20} as special cases.  
\begin{definition}
	A function $u$ satisfies type I+II constraint if simultaneously
	\begin{align*}
	0 &= \int_{\partial\Sigma} u ds,\\
	0 &= \int_{\Sigma} u d\mu.
	\end{align*}
\end{definition}

\begin{definition}
	\label{typeIIIMIdef}
	The Type-I+II Morse index of the capillary hypersurface $\Sigma^n\subset \Omega^{n+1}$ is the index of $Q(\cdot, \cdot)$ on the vector space $\{u\in H^1(\Sigma): \int_{\partial \Sigma}u d s=0,~~ \int_{\Sigma}u d\mu=0\}$.     
\end{definition}
\begin{remark}
	Obviously, for a given capillary hypersurface, the type I or type II Morse index is greater than or equal to the type I+II index. 
\end{remark}
In addition, we consider the case that $\Omega^{n+1}=B^{n+1}\subset \mathbb{R}^{n+1}$ is the unit ball. The goal is to construct a family of functions satisfying type I+II constraint. We would like to highlight that our computation is valid and simpler if the boundary $\partial \Sigma$ is empty. So our results can be applied to closed  CMC hypersurfaces in Euclidean spaces.

\begin{prop} \label{prop3.1} Let $X: \Sigma\to \mathbb{R}^{n+1}$ be an isometric immersion with constant mean curvature. Let, for $\phi=\frac{|X|^2}2+c$, $c_1=n\cos\theta$,  
	\begin{align*} 
	w: &=\langle X,\nu\rangle,\\
	Z: &=(\Delta \phi)X-\phi\Delta X,\\
	\Phi: &=H\phi-nw-c_1-2cH.
	\end{align*}
Then, for $c_2$ any constant, the following identities hold:
	\begin{eqnarray}
	&& J (Z+c_2\nu)=n|\circA|^2 X\label{eq-Xnu},\\
	&& \Delta \Phi=n|\circA|^2w,\\
	&& \langle (Z+c_1\nu),J(Z+c_1\nu)\rangle=n^2|\circA|^2|x^T|^2-\Phi\Delta\Phi.\label{eq-Z1}\\
	&&  \bar{\nabla}_\eta( Z+c_1\nu)|_{\partial \Sigma}=q(Z+c_1\nu)|_{\partial \Sigma},
	\end{eqnarray}
\end{prop}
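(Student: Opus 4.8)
The plan is to verify each of the four identities in Proposition \ref{prop3.1} by direct computation, using the structural equations for immersions into Euclidean space from Lemma \ref{lem3.1} and its corollary, together with the boundary formulae of Proposition \ref{prop3.3}. Throughout I will freely use that $H$ is constant, so that $\n H=0$ and the Jacobi operator $J=\Delta+|\A|^2$ kills $\nu$ and sends $\langle X,\nu\rangle$ to $H$ (Corollary after Lemma \ref{lem3.1}). I will also repeatedly use $\Delta X=-H\nu$, $\Delta\tfrac12|X|^2=n-H\langle X,\nu\rangle$, hence $\Delta\phi=n-Hw$.

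For the first identity, I would expand $Z=(\Delta\phi)X-\phi\,\Delta X=(n-Hw)X+H\phi\,\nu$, and then apply $J$ term by term. The point is that $J$ is a second-order operator, so I must compute $\Delta((n-Hw)X)$ and $\Delta(H\phi\,\nu)$ using the product rule for the Laplacian on vector-valued functions, $\Delta(fV)=(\Delta f)V+2\n f\cdot\n V+f\,\Delta V$, and then collect terms against $X$, $\nu$, and the tangential directions. The $|\circA|^2$ term should emerge from the combination $|\A|^2 - H^2/n$-type cancellations once everything is grouped; since adding $c_2\nu$ changes $Z+c_2\nu$ by something in the kernel of $J$, the constant $c_2$ (and similarly $c_1$, $c$) plays no role in this identity. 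For the second identity, I would write $\Phi=H\phi-nw-c_1-2cH$ and compute $\Delta\Phi=H\Delta\phi-n\Delta w$; using $\Delta\phi=n-Hw$ and $\Delta w=\Delta\langle X,\nu\rangle=H-|\A|^2 w$ (from Lemma \ref{lem3.1} with $\n H=0$), this becomes $H(n-Hw)-n(H-|\A|^2 w)=(n|\A|^2-H^2)w=n|\circA|^2 w$, which is exactly the claim.

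For the third identity, I would use that $J(Z+c_1\nu)=n|\circA|^2 X$ by the first identity (with $c_2=c_1$), so $\langle Z+c_1\nu, J(Z+c_1\nu)\rangle = n|\circA|^2\langle Z+c_1\nu, X\rangle$. I then need to identify $\langle Z+c_1\nu, X\rangle$. Writing $Z+c_1\nu=(n-Hw)X+(H\phi+c_1)\nu$ and pairing with $X$ gives $(n-Hw)|X|^2+(H\phi+c_1)w$; I expect that after substituting $\phi=\tfrac12|X|^2+c$ and $c_1=n\cos\theta$, and after splitting $|X|^2=|X^T|^2+w^2$, this reorganizes as $n|X^T|^2-\Phi w$ up to the factor $n|\circA|^2$ being distributed — matching the right-hand side $n^2|\circA|^2|X^T|^2-\Phi\Delta\Phi$ once the second identity $\Delta\Phi=n|\circA|^2 w$ is used to rewrite $n|\circA|^2 w=\Delta\Phi$. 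This is the step where I expect the bookkeeping to be most delicate, because I must track the constants $c$, $c_1$, $2cH$ carefully; the appearance of $c_1=n\cos\theta$ is presumably dictated precisely so that the boundary identity (the fourth one) holds, not this interior one, so I should double-check whether $c_1$ genuinely cancels here. The fourth identity is essentially a restatement of Proposition \ref{prop3.3}: on $\p\Sigma$ one has $|X|=1$, $\phi=\tfrac12+c$, and $Z+c_1\nu$ is a fixed linear combination of $X+\cos\theta\,\nu$ and the vector field $Y=\langle X,\nu\rangle X-\tfrac12(|X|^2+1)\nu$ appearing there; since $\bar\n_\eta$ applied to each of those two vector fields equals $q$ times the vector field itself, linearity gives the conclusion. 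So I would simply expand $Z+c_1\nu$ on the boundary, write it as $\alpha(X+\cos\theta\,\nu)+\beta Y$ for explicit scalar constants $\alpha,\beta$ (this is where the choice $c_1=n\cos\theta$ and the value of $c$ enter), and invoke Proposition \ref{prop3.3}.

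The main obstacle will be the interior computation for identities (\ref{eq-Xnu}) and (\ref{eq-Z1}): organizing the vector Laplacian of $(n-Hw)X+H\phi\,\nu$ and showing that all tangential contributions and the ``wrong'' normal/radial contributions cancel, leaving exactly $n|\circA|^2 X$. The key inputs that make this work are $\Delta\nu=-|\A|^2\nu$ (CMC), $\langle\n w,\n(\text{coordinate functions})\rangle$-type identities, and the Codazzi-free fact that $\n w$ can be computed from $\bar\n_i\nu=\A_{ij}e_j$ so that $\n w=\n\langle X,\nu\rangle$ has tangential part $\A(X^T,\cdot)$; keeping the traceless part $\circA$ separated from the $H^2/n$ part throughout is what produces the clean $|\circA|^2$ coefficient. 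Once (\ref{eq-Xnu}) and the second identity are in hand, (\ref{eq-Z1}) is algebra and (\ref{bdry1})-type boundary identity is a direct consequence of Proposition \ref{prop3.3}.
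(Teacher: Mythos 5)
Your plan matches the paper's proof: the first two identities follow from the CMC structural equations (with the cross terms in the product-rule expansion of $\Delta Z$ cancelling because $\nabla X\cdot\nabla w = \nabla\phi\cdot\nabla\nu = \A(X^T,\cdot)$ as an ambient vector), the third from the first two together with the algebraic identity $\langle Z+c_1\nu,X\rangle = n|X^T|^2 - w\Phi$, and the fourth from Proposition~\ref{prop3.3} via the decomposition $Z+c_1\nu = n(X+\cos\theta\,\nu) - HY$. One small clarification worth recording: since $Z = (n-Hw)X + H\phi\nu = nX - HY$, the relation $Z+c_1\nu = n(X+\cos\theta\,\nu) - HY$ holds identically on $\Sigma$ (not merely at boundary points), which is what licenses applying $\bar\nabla_\eta$ to both sides and invoking linearity; your phrase ``expand on the boundary'' slightly undersells this.
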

\begin{proof} 
	We first prove \eqref{eq-Xnu}. First, $Z=(\Delta \phi)X-\phi\Delta X=X\Delta\phi+\phi H\nu$. Since $\phi=\frac{|x|^2}2+c$, $\Delta \phi=n-Hw$. Therefore,
	\[
	\begin{split}
	\Delta Z&=X(-H\Delta w)+H\phi\Delta \nu\\
	&=-H X(H-|\A|^2w)+H\phi(-|\A|^2\langle \nu,a\rangle)\\
	&=-H^2 X+H|\A|^2w X+\phi |\A|^2\Delta X\\
	&=(n|\A|^2-H^2)X-|\A |^2[(n-Hw)X-\phi\Delta X]
	\end{split}
	\]
	Then 
	\[
	\Delta Z+|\A|^2Z=n|\circA|^2X.
	\]

   Next, 
	\[
	\Delta \Phi=H(n-Hw)+n(|\A|^2w-H)=n|\circA|^2w.
	\]
	Then,
		\[\begin{split}
	\<(Z+c_1\nu),X\>&=|X|^2\Delta\phi+\phi Hw+c_1w\\
	&=n|X|^2-H|x|^2w+\phi Hw+c_1w\\
	&=n|X^T|^2+nw^2-H(2\phi-2c)w+\phi Hw+c_1w\\
	&=n|X^T|^2-w\Phi. 
	\end{split}
	\]
	Therefore,
	\[\begin{split}
	\langle (Z+c_1\nu),J(Z+c_1\nu)\rangle&=n^2|\circA|^2|X^T|^2-n\Phi|\circA|^2 w\\
	&=n^2|\circA|^2|x^T|^2-\Phi\Delta\Phi
	\end{split}
	\]
	The last identity follows from Proposition \ref{prop3.3}.
\end{proof}

We now fix a coordinate system of $\mathbb{R}^{n+1}$,  $\phi(X)=\frac12(|X|^2+1)$, and choose, for $i=1,2,\cdots, n+1$,  
\begin{align}
u_i &=(Z+c_1\nu)_i,\\
Ju_i &= \phi_i=n|\circA|^2 X_i.
\end{align}
\begin{lemma}
	\label{satisfyingconstraints}
	Each $u_i$ satisfies the type I+II constraint. 
\end{lemma}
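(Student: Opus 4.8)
The plan is to show that each $u_i = (Z + c_1\nu)_i$ has both zero total integral over $\Sigma$ and zero integral over $\partial\Sigma$, where $c_1 = n\cos\theta$, $Z = (\Delta\phi)X - \phi\,\Delta X$, and $\phi = \frac12(|X|^2+1)$. First I would use the divergence-type structure already available. By Proposition \ref{prop3.1}, $J(Z + c_1\nu) = n|\circA|^2 X$ on $\Sigma$ and $\bar\nabla_\eta(Z + c_1\nu) = q(Z + c_1\nu)$ on $\partial\Sigma$. Pairing with a fixed constant vector $a$, the function $f_a := \langle Z + c_1\nu, a\rangle$ satisfies $Jf_a = n|\circA|^2\langle X,a\rangle$ and $\nabla_\eta f_a = q f_a$ on $\partial\Sigma$. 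This is not yet obviously zero-average, so the real work is to produce the two required identities by exhibiting divergence structures.

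For the boundary average I would compute $Z + c_1\nu$ explicitly along $\partial\Sigma$, where $|X| = 1$, $X = \bar N$, and the capillary relations (\ref{mu1}) hold. Using $\Delta\phi = n - Hw$ and $\Delta X = -H\nu$, one gets $Z = (n - Hw)X + \phi H\nu$; restricting to $\partial\Sigma$ and substituting $X = \bar N = \sin\theta\,\eta - \cos\theta\,\nu$, $w = \langle X,\nu\rangle = -\cos\theta$, and $\phi = 1$ on the boundary, $Z + c_1\nu$ should simplify to a multiple of $\eta$ (indeed $Z + c_1\nu$ is parallel to $X + \cos\theta\,\nu = \sin\theta\,\eta$ by Proposition \ref{prop3.3}, consistent with $\bar\nabla_\eta(Z+c_1\nu) = q(Z+c_1\nu)$). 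So $\int_{\partial\Sigma}\langle Z + c_1\nu, a\rangle\,ds$ reduces to a multiple of $\int_{\partial\Sigma}\langle \eta, a\rangle\,ds$ times a scalar function on $\partial\Sigma$; if that scalar is constant, Proposition \ref{boundarysum} (together with $\int_{\partial\Sigma}\langle\bar\nu,a\rangle\,ds$ being controlled) or a direct application of the divergence theorem to $\langle X,a\rangle$ on $\Sigma$ kills it. Concretely, $\int_\Sigma \Delta\langle X,a\rangle = \int_{\partial\Sigma}\langle\eta,a\rangle$ and $\Delta\langle X,a\rangle = -H\langle\nu,a\rangle$, so for a minimal surface this is immediately zero; for the CMC case one needs to combine with the $Y_a$-divergence identity from the proof of Proposition \ref{boundarysum}.

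For the interior average I would integrate $Ju_i = n|\circA|^2 X_i$ over $\Sigma$: $\int_\Sigma n|\circA|^2 X_i = \int_\Sigma J u_i = \int_\Sigma (\Delta + p)u_i = \int_{\partial\Sigma}\nabla_\eta u_i + \int_\Sigma p\, u_i$. This introduces $\int_\Sigma p\,u_i$, which is not obviously manageable, so instead I would seek a direct divergence identity for $u_i$ itself. The natural candidate is to write $Z + c_1\nu = \operatorname{div}(\text{something})$, or equivalently to exhibit, for each constant vector $a$, a tangential vector field on $\Sigma$ whose divergence equals $\langle Z + c_1\nu, a\rangle$; the remark after Proposition \ref{boundarysum} (``$n\nabla\langle X,a\rangle + H(\langle\nu,a\rangle X - \langle X,\nu\rangle a)$ is divergence-free'') and Lemma \ref{lem3.1} suggest the right combination. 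Matching $Z = X\Delta\phi + \phi H\nu = X(n - Hw) + \phi H\nu$ against known divergence expressions for $X^t$, $\nabla\phi$, and $w\,X^t$ should produce a tangential field $W_a$ with $\operatorname{div}W_a = \langle Z + c_1\nu, a\rangle$ provided $c_1 = n\cos\theta$ is exactly the constant that makes the boundary terms of the divergence theorem cancel — this is precisely why that value of $c_1$ was chosen. Then $\int_\Sigma\langle Z + c_1\nu,a\rangle = \int_{\partial\Sigma}\langle W_a,\eta\rangle$, and one checks the boundary integrand vanishes using (\ref{mu1}) and $|X|=1$ on $\partial\Sigma$.

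\textbf{Main obstacle.} The delicate point is bookkeeping the constants: verifying that $c_1 = n\cos\theta$ (and the particular normalization $\phi = \frac12(|X|^2+1)$) is exactly what forces the boundary contributions in both divergence theorems to cancel, so that $Z + c_1\nu$ is genuinely the gradient-plus-correction of a tangential field with no residual boundary flux. Everything else is a substitution exercise using Lemma \ref{lem3.1}, Proposition \ref{prop3.3}, Proposition \ref{boundarysum}, and the capillary identities (\ref{mu0})--(\ref{mu1}); the conceptual content is isolating the correct tangential primitive $W_a$ on $\Sigma$.
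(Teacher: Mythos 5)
Your approach is essentially the paper's: for the interior average you exhibit $u_i$ as a divergence ($Z_i = \operatorname{div}(X_i\nabla\phi - \phi\nabla X_i)$ and $n\nu_i = \operatorname{div}(\nu_i X - w\,e_i)$), convert to a boundary integral, and check that the boundary integrand cancels via the capillary relations; for the boundary average you restrict $Z + c_1\nu$ to $\partial\Sigma$ and invoke Proposition~\ref{boundarysum}. One slip to correct: the parenthetical claim that $Z + c_1\nu$ is ``parallel to $X + \cos\theta\,\nu = \sin\theta\,\eta$'' on $\partial\Sigma$ is false unless $H = 0$. Writing $Z + c_1\nu = n(X + \cos\theta\,\nu) - H Y$ and restricting to $\partial\Sigma$ gives $u_i|_{\partial\Sigma} = n\sin\theta\,\langle\eta,e_i\rangle + H\sin\theta\,\langle\bar\nu,e_i\rangle$, so there is a genuine $\bar\nu$-component when $H \neq 0$, and it is precisely the combination $n\eta + H\bar\nu$ whose boundary integral Proposition~\ref{boundarysum} shows vanishes — which is the identity you already invoke, so the strategy goes through once the parallelism remark is dropped.
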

\begin{proof}
	Indeed, we have
	\begin{align*}
	Z_i &=(\Delta \phi)X_i-\phi\Delta X_i=\ddiv(X_i\nabla\phi -\phi\nabla X_i),\\
	n\nu_i &= \ddiv (\nu_i X-w e_i).
	\end{align*}
	Therefore, by the divergence theorem, 
	\begin{align*}
	\int_\Sigma u_i &= \int_{\Sigma} \ddiv (X_i\nabla\phi -\phi\nabla X_i+\cos\th (\nu_i X-w e_i))d\mu\\
	&= \int_{\partial\Sigma} (X_i \<X,\eta\>-\eta_i+\cos\th(\nu_i\<X, \eta\>-w \eta_i)) ds
	\end{align*}
	Applying (\ref{mu0}) and (\ref{mu1}) yields
	\begin{align*}
	\int_\Sigma u_i &= \int_{\partial\Sigma} (-\cos\th \nu_i+\cos\th \nu_i)ds\\&=0. 
	\end{align*}
	
	Next, we consider $u_i$ on the boundary $\partial\Sigma$. Using the calculation above and (\ref{mu1}) we have 
	\begin{align*}
	u_i\mid_{\partial\Sigma} &= (n-H\<X,\nu\>)X_i+\frac{1}{2}(X^2+1)H\nu_i+n\cos\th \nu_i\\
	&=(n+H\cos\th)X_i+H\nu_i+ n\cos\th\nu_i\\
	&=n \<X+\cos\th \nu, e_i\>+H\<\cos\th X+\nu, e_i\>\\
	&= n\sin\th \<\eta, e_i\>+H\sin\th \<\bar\nu, e_i\>.
	\end{align*}
	Thus, by Proposition \ref{boundarysum}, 
	\begin{align*}
	\int_{\p\Sigma} u_i &= 0. 
	\end{align*}
	
\end{proof}
 We consider the folloiwng matrix
 $$\Upsilon:=\left( \int_\Sigma u_i\phi_j\right)_{(n+1)\times(n+1)}.$$ 
\begin{lemma}	
	\label{traceofmatrix}	
The trace $\textrm{tr}(\Upsilon)$ satisfies:
	\[
	\textrm{tr}(\Upsilon)=\int_\Sigma \left(n^2|\circA|^2|X^T|^2+|\nabla\Phi|^2\right)\, dA.
	\]
\end{lemma}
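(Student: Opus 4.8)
The plan is to compute $\textrm{tr}(\Upsilon)$ directly from the definitions, recognize the integrand as the pointwise quantity controlled by \eqref{eq-Z1} of Proposition \ref{prop3.1}, and then dispose of the single boundary term produced by one integration by parts.

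First I would use $u_i=(Z+c_1\nu)_i$ and $\phi_i=n|\circA|^2X_i$ to write
\[
\textrm{tr}(\Upsilon)=\sum_{i=1}^{n+1}\int_\Sigma u_i\phi_i\,d\mu=\int_\Sigma n|\circA|^2\langle Z+c_1\nu,\,X\rangle\,d\mu .
\]
Since $J(Z+c_1\nu)=n|\circA|^2X$ by \eqref{eq-Xnu} (applied componentwise, with $c_2=c_1$), the integrand is exactly $\langle Z+c_1\nu,\,J(Z+c_1\nu)\rangle$, which \eqref{eq-Z1} identifies with $n^2|\circA|^2|X^T|^2-\Phi\Delta\Phi$. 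Hence $\textrm{tr}(\Upsilon)=\int_\Sigma\big(n^2|\circA|^2|X^T|^2-\Phi\Delta\Phi\big)\,d\mu$.

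Next I would integrate the last term by parts over $\Sigma$, obtaining $-\int_\Sigma\Phi\Delta\Phi\,d\mu=\int_\Sigma|\nabla\Phi|^2\,d\mu-\int_{\partial\Sigma}\Phi\,\nabla_\eta\Phi\,ds$, so the claimed identity reduces to the vanishing of $\int_{\partial\Sigma}\Phi\,\nabla_\eta\Phi\,ds$. For that I would show $\Phi\equiv0$ on $\partial\Sigma$: with the chosen normalization $\phi=\tfrac12(|X|^2+1)$ (i.e. $c=\tfrac12$) we have $\Phi=H\phi-nw-c_1-H$ with $w=\langle X,\nu\rangle$ and $c_1=n\cos\theta$; along $\partial\Sigma$ the immersion lies on the unit sphere, so $|X|=1$ and $\phi=1$ there, while $X=\bar N=\sin\theta\,\eta-\cos\theta\,\nu$ by \eqref{mu1} gives $w=\langle X,\nu\rangle=-\cos\theta$. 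Substituting yields $\Phi|_{\partial\Sigma}=H-n(-\cos\theta)-n\cos\theta-H=0$, so the boundary integral vanishes and the formula follows.

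The computation is essentially bookkeeping once \eqref{eq-Z1} is in hand; the only point that genuinely requires attention is the vanishing of the boundary term, i.e. the identity $\Phi|_{\partial\Sigma}=0$, which uses both $|X|=1$ and the capillary relation $\langle X,\nu\rangle=-\cos\theta$ on $\partial\Sigma$. (When $\partial\Sigma=\emptyset$ there is nothing to check, and the same argument gives the corresponding formula for closed CMC hypersurfaces.)
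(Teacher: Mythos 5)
Your proof is correct and follows essentially the same route as the paper: identify $\sum_i u_i\phi_i$ with $\langle Z+c_1\nu, J(Z+c_1\nu)\rangle$ via Proposition \ref{prop3.1}, apply \eqref{eq-Z1}, integrate $-\Phi\Delta\Phi$ by parts, and kill the boundary term by showing $\Phi\vert_{\partial\Sigma}=0$ from $|X|=1$ and $\langle X,\nu\rangle=-\cos\theta$. Your write-up is actually a bit more careful than the paper's, which merely states $\int_\Sigma\Phi\Delta\Phi=-\int_\Sigma|\nabla\Phi|^2$ after verifying $\Phi\vert_{\partial\Sigma}=0$, leaving the boundary term implicit.
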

\begin{proof}
	
	We use (\ref{eq-Z1}) in Proposition \ref{prop3.1}:	
	\[
	\sum_{i=1}^{n+1}u_i\phi_i=\langle (Z+c_1\nu),J(Z+c_1\nu)\rangle=n(n|\A|^2-H^2)|X^T|^2-\Phi\Delta\Phi.
	\]
	Since $, \phi(X)=\frac12(|X|^2+1)$, on the boundary, 
	\begin{align*}
	\Phi &=H\phi -nw-n\cos\th-H\\
	&=\frac12H(|X|^2+1)-n\langle X, \nu\rangle-n\cos\th-H\\
	&=0. 
	\end{align*}
	Then the results follows from $\int_\Sigma \Phi\Delta\Phi=-\int_\Sigma |\nabla\Phi|^2.$
\end{proof}

\begin{definition}
	A capillary hypersurface is called $|\circA|^2$-scale equivalent to a hyper-planar domain if $|\circA|^2 X$ is on a hyperplane.  
\end{definition}
\begin{remark}  A capillary hypersurface is $|\circA|^2$-scale equivalent to a hyper-planar then its is on half-ball and the level sets of $|\A|^2$ are hyper-planar.  
	\end{remark}
 
\begin{theorem}\label{thm1}
	Assume $X: M\to \mathbb{R}^{n+1}$ is an immersed  capillary hypersurface in the Euclidean unit ball $\mathbb{B}^{n+1}$. Let $\ell$ be the number of nonnegative eigenvalues of the matrix  $\Upsilon$.
	Then
	\begin{enumerate}
		\item If $X$ has zero type-I+II Morse index then it is totally umbilical.
		\item If it is $|\circA|^2$-scale equivalent to a hyper-planar domain then its type-I+II Morse index is greater than or equal to $\ell-1$.   
		\item Otherwise, the type-I+II Morse index is greater than or equal to $\ell$. 
	\end{enumerate}
\end{theorem}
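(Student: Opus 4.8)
The plan is to derive all three conclusions from Theorem \ref{thmindex} applied with the constraint functionals $\varphi_1(u) = \int_\Sigma u\, d\mu$ and $\varphi_2(u) = \int_{\partial\Sigma} u\, ds$, whose common kernel $G$ is exactly the type-I+II test space, together with the functions $u_1,\dots,u_{n+1}$ constructed in Proposition \ref{prop3.1}. By Lemma \ref{satisfyingconstraints} each $u_i$ lies in $G$, and by Proposition \ref{prop3.1} they satisfy $Ju_i = \phi_i$ on $\Sigma$ with the Robin boundary condition $\nabla_\eta u_i = q u_i$ on $\partial\Sigma$, where $\phi_i = n|\circA|^2 X_i$. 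So the only hypothesis of Theorem \ref{thmindex} left to check is the linear independence of $\{\varphi_1, \varphi_2, \bar\phi_1,\dots,\bar\phi_k\}$, where $\bar\phi_i$ is $L^2$-multiplication by $\phi_i$ and $k$ is the maximal number of independent $\phi_i$'s. Once that is arranged, Theorem \ref{thmindex} gives $\text{MI}^{\{\varphi_1,\varphi_2\}}(Q) \geq i_k$, with $i_k$ the number of nonnegative eigenvalues of $\Upsilon = \left(\int_\Sigma u_i\phi_j\right)$; since $\text{MI}^{\{\varphi_1,\varphi_2\}}(Q)$ is precisely the type-I+II Morse index, this is the desired estimate.

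The role of the dichotomy in (2) versus (3) is exactly a bookkeeping issue about how many of the $\phi_i$ are independent and how that compares to $\ell$, the number of nonnegative eigenvalues of the full $(n+1)\times(n+1)$ matrix $\Upsilon$. First I would treat the generic case: if the immersion is \emph{not} $|\circA|^2$-scale equivalent to a hyper-planar domain, then $|\circA|^2 X$ spans all of $\mathbb{R}^{n+1}$, so all $n+1$ functions $\phi_i = n|\circA|^2 X_i$ are $L^2$-linearly independent, and one checks that $\varphi_1,\varphi_2$ together with $\bar\phi_1,\dots,\bar\phi_{n+1}$ remain independent (the $\bar\phi_i$ vanish to high order or are supported where $\varphi_1,\varphi_2$ are not, or more simply: a linear relation would force $|\circA|^2 X$ into a hyperplane after subtracting the $\varphi$ contributions, which are multiplication by $1$ on $\Sigma$ and on $\partial\Sigma$ — this needs a short argument). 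Then Theorem \ref{thmindex} with $k = n+1$ gives index $\geq i_{n+1} = \ell$. In the degenerate case where $|\circA|^2 X$ lies in a hyperplane, one can only extract $k \leq n$ independent $\phi_i$'s, and the count drops by at most one: the number of nonnegative eigenvalues of the $k\times k$ submatrix is at least $\ell - 1$ (a rank/interlacing argument, since $\Upsilon$ has a nullity coming from the hyperplane relation), giving index $\geq \ell - 1$.

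For part (1), I would argue by contrapositive: suppose $\Sigma$ is not totally umbilical, so $|\circA|^2 \not\equiv 0$. By Lemma \ref{traceofmatrix}, $\text{tr}(\Upsilon) = \int_\Sigma \big(n^2|\circA|^2|X^T|^2 + |\nabla\Phi|^2\big)\, dA$. I need to see this is strictly positive: if it vanished then $|X^T|^2|\circA|^2 \equiv 0$ and $\Phi$ is constant (hence $\equiv 0$ by its boundary value in Lemma \ref{traceofmatrix}); from $\Phi \equiv 0$ and $\Delta\Phi = n|\circA|^2 w$ we get $|\circA|^2 w \equiv 0$, and combined with $|\circA|^2 |X^T|^2 \equiv 0$ and $|X|^2 = |X^T|^2 + w^2$ one forces $|\circA|^2 \equiv 0$ on the support, a contradiction (here I'd use that $\{|\circA|^2 > 0\}$ is open and that $|X|^2$ can't vanish identically on an open piece of a hypersurface in the ball — modulo checking the case $|X| \equiv 0$, impossible for an immersion into $\mathbb{B}^{n+1}\setminus\{0\}$ near the boundary). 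Thus $\text{tr}(\Upsilon) > 0$, so $\Upsilon$ has at least one positive eigenvalue, so $\ell \geq 1$, and in either of cases (2),(3) the type-I+II Morse index is $\geq \ell - 1 \geq 0$ — but to get a \emph{nonzero} index I actually need $\ell \geq 2$ in the scale-equivalent case or $\ell \geq 1$ otherwise; the cleanest route is to observe that in the non-scale-equivalent case $\text{tr}(\Upsilon) > 0$ directly forces one positive eigenvalue hence index $\geq 1 > 0$, while the scale-equivalent case is handled by the remark that such a surface lies in a half-ball, where a separate (and I expect easier) argument rules out stability, or by noting the traceless trace estimate still produces enough positive eigenvalues.

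The main obstacle I anticipate is the linear-independence verification of $\{\varphi_1,\varphi_2,\bar\phi_1,\dots,\bar\phi_k\}$ and, relatedly, pinning down exactly how $k$ and $\ell$ interact in the degenerate scale-equivalent case so that the "$-1$" in part (2) is exactly right and not off by more; this requires a careful rank argument relating the hyperplane containing $|\circA|^2 X$ to the kernel structure of $\Upsilon$. The positivity argument for $\text{tr}(\Upsilon)$ in part (1) is the other delicate point, since it hinges on ruling out the simultaneous vanishing of $|\circA|^2|X^T|^2$ and $\nabla\Phi$ on an open set, which uses unique continuation / the structure of capillary hypersurfaces in the ball rather than a purely formal computation.
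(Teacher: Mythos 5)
Your overall strategy is the same as the paper's: feed the functions $u_i = (Z + n\cos\theta\,\nu)_i$ from Proposition \ref{prop3.1} (which satisfy the constraints by Lemma \ref{satisfyingconstraints} and the Robin boundary condition by Proposition \ref{prop3.3}) into Theorem \ref{thmindex}, check the relevant linear independences, and read off the index bound from the matrix $\Upsilon$. Parts (2) and (3) in the paper are handled exactly as you anticipate: one shows $\phi_1,\dots,\phi_{n+1}$ are independent whenever $\Sigma$ is not totally geodesic (a linear relation would force an open piece of $\Sigma$ into a hyperplane), and then checks that on $H_1$ the collection $\{\bar\phi_1,\dots,\bar\phi_{n+1}\}$ drops rank by one precisely in the $|\circA|^2$-scale-equivalent case, because only then can a combination of the $\bar\phi_i$ reproduce the $L^2(\Sigma)$ functional $\chi$. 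The interlacing/rank issue you flag as the main obstacle for part (2) is real; the paper handles it with the terse line ``applying Theorem \ref{thmindex} yields the desired results,'' so your caution there is well placed rather than a sign of a missing idea.

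Where you make things harder than necessary is part (1). You argue by contrapositive through $\ell$, and then worry — correctly, given that route — that ``$\ell - 1 \geq 0$'' does not by itself give a nonzero index, since $\ell$ counts zero eigenvalues too. But you do not need to go through parts (2),(3) at all. The paper's argument is direct: if the type-I+II index is zero, then $Q \geq 0$ on $H_1 \ni u_i$, and since $Q(u_i,u_j) = -\Upsilon_{ij}$ (the computation inside Theorem \ref{thmindex}), the symmetric matrix $\Upsilon$ is negative semidefinite, hence $\mathrm{tr}(\Upsilon) \leq 0$. Lemma \ref{traceofmatrix} then forces $|\circA|^2|X^T|^2 \equiv 0$ and $\nabla\Phi \equiv 0$; since $\Phi$ vanishes on $\partial\Sigma$, $\Phi\equiv 0$, so $\Delta\Phi = n|\circA|^2\langle X,\nu\rangle \equiv 0$, and combining with $|\circA|^2|X^T|^2\equiv 0$ and $|X|^2 = |X^T|^2 + \langle X,\nu\rangle^2$ gives $|\circA|^2\equiv 0$ (your unique-continuation / nonvanishing-of-$|X|$ observation closes this). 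Alternatively, your contrapositive can be salvaged without reference to $\ell$: $\mathrm{tr}(\Upsilon) > 0$ produces a strictly positive eigenvalue of $\Upsilon$, hence a vector $c$ with $Q\big(\sum c_i u_i, \sum c_j u_j\big) = -c^T\Upsilon c < 0$, i.e.\ a negative direction for $Q$ inside $H_1$, so the index is at least $1$. Either fix removes the gap you were worried about.
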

\begin{proof}[Proof of Theorem \ref{thm1}]
	Let $H^1(\Sigma)\ \supset H_1:=\{u\in H, \int_\Sigma u=\int_{\p\Sigma}u=0\}$. By Lemma \ref{satisfyingconstraints}, $u_i\in H_1$. Thus, if it is type-I+II stable then $\Upsilon$ is nonpositive definite, and $\textrm{tr}(\Upsilon_1)\le 0$.	From Lemma \ref{traceofmatrix}
	\begin{eqnarray}\label{stab-eq3}
	&&\int_M \left(n(n|\A|^2-H^2)|x^T|^2+|\nabla\Phi|^2\right)\, dA\le 0.
	\end{eqnarray}
	
	Thus, $|X^T|^2(n|\A|^2-H^2)=0$ and $\n \Phi=0$ which implies that $\Phi\equiv 0$. Therefore,	$\< X, \nu\> (n|\A|^2-H^2)=\Delta\Phi\equiv 0$ and, consequently, $\circA\equiv 0$ and $\Sigma$ is totally umbilical.\\ 
	
	Second, if $\Sigma$ is not umbilical then $(n|\A|^2-H^2)\ne 0$ at some point, then there exists an opens subset $U$ on which $(n|\A|^2-H^2)\ne 0$. Let $\phi_i'$ be the linear functional defined by $L^2(\Sigma)$ multiplication of the function $\phi_i$.\\ 
	
	\textbf{Claim:} $\phi_1, \cdots,\phi_{n+1}$ are linearly independent and, thus, the dimension of the space $\text{span}\{\phi_1, \cdots,\phi_{n+1}\}$ is $n+1$. 
	
	Otherwise there exists $c_1, \cdots, c_{n+1}\in \mathbb{R}$ such that $c^2_1+ \cdots+c^2_{n+1}\ne 0$ and, by Proposition \ref{prop3.1},  
	\[
	c_1 X_1+ \cdots c_{n+1}X_{n+1}=0.
	\]
	This implies that $U$ is contained in a hyperplane and thus is totally geodesic which contradicts to $(n|\A|^2-H^2)\ne 0$. Thus, the claim is true,
	
	\textbf{Claim:} The space $\text{span}(\bar\phi_1, \cdots,\bar\phi_{n+1})$, as linear functionals on $H_1$ has dimension $n$ if $\Sigma$ is $|\circA|^2$-scale equivalent to a hyper-planar. Otherwise it has dimension $n+1$.
	
	Let $\chi$ and $\psi$ be linear functionals on $H^1(\Sigma)$ defined by $L^2(\Sigma)$ and $L^2(\p\Sigma)$-multiplication by the constant function $1$. It is immediate that 
$\{\bar\phi_1, \cdots,\bar\phi_{n+1}, \psi\}$ are linearly independent. Then, $\{\bar\phi_1, \cdots,\bar\phi_{n+1}, \chi\}$ are linearly dependent if and only if, by Proposition \ref{prop3.1}, 
\[c_1 |\circA|^2 X_1+...c_{n+1}|\circA|^2X_{n+1}+1=0.\]
That is, $|\circA|^2 \<X,a\>=1$ for some constant vector $a$. In that case, $|\circA|^2 X$ is hyper-planar. Using the previous claim, the proof is finished. \\

Finally, applying Theorem \ref{thmindex} yields the desire results.

\end{proof}

We can consider $\Upsilon$ as a bilinear form on $\mathbb{R}^{n+1}$. Namely
\begin{definition}
	The symmetric  bilinear form $\Upsilon$ on $R^{n+1}$ is defined by
	\[
	\begin{split}
	\Upsilon(v_1,v_2):&= \int_\Sigma(n|\A|^2-H^2)\langle Z+n\cos\theta\nu,v_1\rangle\langle X,v_2\rangle\\
	&= \int_\Sigma n|\circA|^2[\langle (n-H\langle X,\nu\rangle)X+(n\cos\theta+\frac{H}2(|X|^2+1))\nu,v_1\rangle\langle X,v_2\rangle] 
	\end{split}
	\]
	where $v_1, v_2\in \mathbb{R}^{n+1}$.
\end{definition}  

From the properties of the stability operator $J$, the bilineanr  form $\Upsilon$  is symmetric. Therefore we can choose an orthonormal base such that $\Upsilon$ can be diagonalized as $\textrm{diag}(\lambda_1,  \cdots,\lambda_{n+1})$
with 
\begin{equation}\label{eq-lam}
\lambda_i=\int_\Sigma n|\circA|^2(n-H\langle X,\nu\rangle)X_i^2+\int_\Sigma n|\circA|^2(n\cos\theta+\phi H)X_i\nu_i.
\end{equation}
When $H=0$, 
\begin{equation}
\begin{split}
\lambda_i&=\int_\Sigma (n|\A|^2)(n X_i^2+n\cos\theta X_i\nu_i).
\end{split}
\end{equation}
When $\cos\theta=0$, 
\begin{equation}\label{eq-lam1}
\lambda_i=\int_\Sigma n^2|\circA|^2[(n-H\langle X,\nu\rangle)X_i^2+\phi H X_i\nu_i].
\end{equation}
Therefore we can restate Lemma \ref{traceofmatrix} as the following. 
\begin{corollary}
	Let $X: \Sigma\to \mathbb{R}^{n+1}$ be  an immersed  capillary hypersurface in the Euclidean unit ball $\mathbb{B}^{n+1}$.
	Then 
	\[\sum_{i=1}^{n+1}\lambda_i=\int_\Sigma n^2\left(|\circA|^2|X^T|^2+|\circA(X^T)|^2\right).
	\]
	
\end{corollary}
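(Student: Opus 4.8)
The plan is to observe that, since the symmetric form $\Upsilon$ has been put in diagonal form $\mathrm{diag}(\lambda_1,\dots,\lambda_{n+1})$, one has $\sum_{i=1}^{n+1}\lambda_i=\mathrm{tr}(\Upsilon)$, so the corollary is nothing but Lemma \ref{traceofmatrix} combined with a reinterpretation of $\int_\Sigma|\nabla\Phi|^2$ in more geometric terms. Thus the whole content of the proof is the computation of the intrinsic gradient of the auxiliary function $\Phi$.

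First I would rewrite the first term in Lemma \ref{traceofmatrix}: since $|\circA|^2=|\A|^2-\tfrac{H^2}{n}$, we have $n\big(n|\A|^2-H^2\big)=n^2|\circA|^2$, so Lemma \ref{traceofmatrix} reads $\mathrm{tr}(\Upsilon)=\int_\Sigma\big(n^2|\circA|^2|X^T|^2+|\nabla\Phi|^2\big)\,dA$. It then remains to identify $|\nabla\Phi|^2=n^2|\circA(X^T)|^2$. Recall that in this setting $\phi=\tfrac12(|X|^2+1)$, $w=\langle X,\nu\rangle$, and $\Phi=H\phi-nw-n\cos\theta-H$; since $H$ and the two additive constants are constant, $\nabla\Phi=H\,\nabla\phi-n\,\nabla w$. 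Using $\bar\nabla_{e_j}X=e_j$ one gets $\nabla\phi=X^T$, the tangential component of the position vector. For $w$, differentiating $\langle X,\nu\rangle$ and applying the Weingarten relation $\bar\nabla_{e_j}\nu=S(e_j)$, where $S$ is the shape operator associated with $\A$ (so that $\langle S(Y),Z\rangle=\A(Y,Z)$ and $\mathrm{tr}\,S=H$), gives $\nabla w=S(X^T)$.

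Combining these, $\nabla\Phi=H\,X^T-n\,S(X^T)=-n\big(S(X^T)-\tfrac{H}{n}X^T\big)=-n\,\circA(X^T)$, hence $|\nabla\Phi|^2=n^2|\circA(X^T)|^2$, and substituting this into the rewritten Lemma \ref{traceofmatrix} yields the claimed formula for $\sum_i\lambda_i$. The only point requiring care is the sign and normalization in the Weingarten identity (in the paper's convention $\A(Y,Z)=-\langle\bar\nabla_Y Z,\nu\rangle$), together with verifying that the constant terms in $\Phi$ genuinely drop out under the intrinsic gradient; beyond that bookkeeping there is no analytic obstacle — it is a short tensorial computation resting entirely on identities already recorded in Lemma \ref{lem3.1}, Proposition \ref{prop3.1}, and Lemma \ref{traceofmatrix}.
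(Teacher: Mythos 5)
Your proposal is correct and follows exactly the paper's argument: start from Lemma \ref{traceofmatrix}, reduce to computing $\nabla\Phi$, use $\nabla\phi=X^T$ and the Weingarten relation to get $\nabla\Phi=HX^T-n\A(X^T)=-n\circA(X^T)$, and square. The only difference is that you spell out the intermediate steps (the identity $n(n|\A|^2-H^2)=n^2|\circA|^2$, the shape-operator computation, and the sign bookkeeping) which the paper leaves implicit.
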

\begin{proof}
	From Lemma \ref{traceofmatrix}, we have
	\[\sum_{i=1}^{n+1}\lambda_i=\int_\Sigma \left(n(n|\A|^2-H^2)|x^T|^2+|\nabla\Phi|^2\right).
	\]
	Here $\Phi=H\phi -nw-n\cos\th-H$. Then
	\[
	\begin{split}
	\nabla \Phi&=H\nabla \frac{|X|^2}{2}-n\nabla \langle X, \nu\rangle\\
	&=H X^T-n\A(X^T).
	\end{split}
	\]
	The result follows.
\end{proof}

\subsection{Cylinder in a ball}
\label{subcylinder}
We consider a flat cylinder of radius $0<r<1$ inside $\Omega=\mathbb{B}^{n+1}$, $X:\Sigma=[-T, T]\times \mathbb{S}^{n-1} \mapsto \Omega=\mathbb{B}^{n+1}$ 
\begin{equation}
\label{cylinder}
X(t, z) =(t, rz).
\end{equation}
Here $T=\sqrt{1-r^2}$. Let $\{w_i, i=1,...n-1\}$ be a basis of tangent vectors on $\mathbb{S}^{n-1}$. It is straightforward to compute tangent vectors of the cylinder:
\begin{align*}
X_{,t}&= (1, \vec{0}),\\
X_{,i}&= (0, r w_i). 
\end{align*}
Consequently,  
\begin{align*}
\nu&=(0, z),\\
\eta &=\frac{t}{|t|}X_t.
\end{align*} 
Thus, the boundary derivative becomes
\[\nabla_\eta=\partial_{\pm t}.\]
By our convention, 
\begin{align*}
\sin\th &=\<X. \eta\>=\sqrt{1-r^2}=T,\\
\cos\th &=-\<X, \nu\>=-r.
\end{align*}
Thus, \[q=\frac{1}{\sin\th}+\cot\th \A(\eta, \eta)=\frac{1}{\sqrt{1-r^2}}.\]
The second fundamental form is
\begin{align*}
\A(X_t, X_t) &= -\<X_{tt}, \nu\>=0,\\
\A(X_i, X_i) &= -\<X_{ii}, \nu\>=r,\\
\end{align*}
Thus, 
\begin{align*}
H &= (n-1)\frac{r}{|X_i|^2}=\frac{n-1}{r},\\
|\A|^2 &=\frac{n-1}{r^2},\\
|\circA|^2 &=|\A|^2-\frac{H^2}{n}=\frac{n-1}{nr^2}.
\end{align*}
The Jacobi operator is
\[J=\Delta+|\A|^2=\partial_t^2+\frac{1}{r^2}\Delta_{\mathbb{S}^{n-1}}+\frac{n-1}{r^2}.\]
We will first consider the index without any constraint, $\text{MI}(Q)$. Recall that it is the number of negative eigenvalues of the Robin boundary problem: 
\begin{align*}
J u +\lambda u &=0 \text{ on $\Sigma$},\\
\nabla_\eta u &=qu \text{ on $\partial\Sigma$}.
\end{align*}
By separation of variables, 
\begin{align*}
u &= f(t)g(z)
\end{align*}
It is then followed that $g$ is a spherical Laplacian eigenfunction 
\begin{align*}
0 &=\Delta_{\mathbb{S}^{n-1}}g +\alpha g,\\
0 &= f''+\beta f,\\
\beta &:=\lambda+\frac{n-1}{r^2}-\frac{\alpha}{r^2},\\
\frac{f(T)}{f'(T)} &=-\frac{f(-T)}{f'(-T)}=T.
\end{align*}
Therefore, we exhaust all possible cases: 
\[\begin{cases}
	\beta=0,\\
	\beta>0,~~ \tan(\sqrt{\beta}T)=\sqrt{\beta}T \text{ or} \cot(\sqrt{\beta}T)=-\sqrt{\beta}T,\\
	\beta<0,~~ \coth(\sqrt{-\beta}T)=\sqrt{-\beta}T. 
\end{cases}\]
Furthermore, by spherical harmonic analysis, $\alpha=k(k+n-2)$ with multiplicity 
\begin{align*}
m_0 &=1,\\
m_1 &=n,\\
m_k &= \frac{(n+k-1)!}{(n-1)!k!}- \frac{(n+k-3)!}{(n-1)!(k-2)!}. 
\end{align*}

\begin{proposition}\label{prop-cyl}   For a cylinder, 
	\begin{enumerate}
		\item $\text{MI}(Q) \geq n+2$;
		\item When $r\rightarrow 0$ or $r\rightarrow 1$ $\text{MI}(Q)\rightarrow \infty$;
		\item There is an interval $0<a<r<b<1$ such that $\text{MI}(Q)=n+2$.  
	\end{enumerate} 
\end{proposition}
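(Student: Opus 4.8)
The plan is to analyze the eigenvalue problem for the cylinder explicitly via separation of variables, as set up in the excerpt, and to track how many negative eigenvalues $\lambda$ arise. Recall that a negative eigenvalue $\lambda<0$ corresponds to a mode $u=f(t)g(z)$ with $g$ a spherical harmonic of eigenvalue $\alpha=k(k+n-2)$ (multiplicity $m_k$), and $f$ solving $f''+\beta f=0$ with the Robin condition $f(\pm T)=\pm T f'(\pm T)$, where $\beta=\lambda+\tfrac{n-1}{r^2}-\tfrac{\alpha}{r^2}$. So $\text{MI}(Q)=\sum_k m_k\, N_k$, where $N_k$ is the number of admissible $\beta$-values (even or odd modes satisfying $\tan(\sqrt\beta T)=\sqrt\beta T$, $\cot(\sqrt\beta T)=-\sqrt\beta T$, $\beta=0$, or $\coth(\sqrt{-\beta}T)=\sqrt{-\beta}T$) that yield $\lambda=\beta-\tfrac{n-1}{r^2}+\tfrac{\alpha}{r^2}<0$, i.e. $\beta<\tfrac{n-1-\alpha}{r^2}=\tfrac{(n-1)-k(k+n-2)}{r^2}$.

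First I would handle the lower bound $\text{MI}(Q)\ge n+2$ by exhibiting explicit negative eigenvalues. For $k=0$ ($\alpha=0$, $m_0=1$): the threshold is $\beta<\tfrac{n-1}{r^2}$; both the constant/linear modes ($\beta=0$ with $f\equiv 1$ — check it satisfies the Robin condition, and $f(t)=t$ as an odd mode with $\beta=0$) and possibly the $\coth$-branch contribute, and one should extract at least two negative eigenvalues from $k=0$ — indeed $f=1$ gives $\lambda=-\tfrac{n-1}{r^2}<0$ and an odd branch gives another. For $k=1$ ($\alpha=n-1$, $m_1=n$): the threshold becomes $\beta<0$, so one needs the $\coth$-branch $\coth(\sqrt{-\beta}T)=\sqrt{-\beta}T$, which always has a solution with $\sqrt{-\beta}T$ in a fixed range, contributing $\lambda<0$ with multiplicity $n$. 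Together that is $2+n=n+2$ negative eigenvalues, giving (1). I would verify the Robin condition is satisfied by the claimed $f$'s and that the coth equation genuinely has a root (it does: $x\coth x>1$ for $x>0$, and $\coth x\to 1$, so $\coth x = x$ has a unique positive root).

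For (2), the blow-up as $r\to 0$ or $r\to 1$: note $T=\sqrt{1-r^2}$, so $r\to 1$ means $T\to 0$ and $r\to 0$ means $T\to 1$ but $\tfrac{1}{r^2}\to\infty$. As $r\to 0$, for any fixed $k$ the threshold is $\tfrac{(n-1)-k(k+n-2)}{r^2}$; for $k=0$ this is $\tfrac{n-1}{r^2}\to+\infty$, so arbitrarily many even/odd oscillatory modes $\beta=\big(\tfrac{j\pi+O(1)}{2T}\big)^2$ fall below the threshold, and since $T\to 1$ is bounded the count of such $\beta<\tfrac{n-1}{r^2}$ tends to infinity — hence $N_0\to\infty$ and $\text{MI}(Q)\to\infty$. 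As $r\to 1$: here $\tfrac{n-1}{r^2}\to n-1$ stays bounded, so the $k=0$ count does not obviously blow up; instead one should use that $T\to 0$ makes the $\coth$-branch root $\beta\sim -\tfrac{c^2}{T^2}\to-\infty$ very negative... actually the relevant mechanism is different, so the careful step is: for $r\to 1$, rescale and observe that for each fixed $k$ with $k(k+n-2)\le n-1$ — only $k=0,1$ — the thresholds are bounded, but we can instead count modes by letting $k$ grow; wait, larger $k$ makes the threshold negative. So the correct mechanism for $r\to 1$ is via the $\coth$ equation: with $T\to 0$, $\coth(\sqrt{-\beta}\,T)=\sqrt{-\beta}\,T$ forces $\sqrt{-\beta}\,T=O(1)$ hence $\beta\approx -c/T^2\to-\infty$; but we need $\lambda = \beta +\tfrac{\alpha-(n-1)}{r^2}<0$ — for $k=0$, $\beta<0$ automatically negative, giving one mode, and for the oscillatory branches $\tan(\sqrt\beta T)=\sqrt\beta T$, as $T\to 0$ the $j$-th root has $\sqrt\beta T\approx (j+\tfrac12)\pi$, so $\beta\approx\big(\tfrac{(j+1/2)\pi}{T}\big)^2$, and this is below $\tfrac{n-1}{r^2}$ only for finitely many $j$; so in fact the $k=0$ count stays bounded as $r\to 1$. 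Hmm — so I will need to reconsider and likely the true blow-up as $r\to 1$ uses summation over $k$ of the $\coth$-branch: for each $k\ge 2$ the threshold $\tfrac{(n-1)-k(k+n-2)}{r^2}$ is negative but of size $O(1/r^2)=O(1)$, while the $\coth$-root $\beta_k$ only depends on $T$ (not $k$) and $\to-\infty$, so $\beta_k$ lies below the threshold for all $k$ up to some $K(T)\to\infty$ as $T\to 0$; summing $m_k$ over these $k$ gives $\text{MI}(Q)\to\infty$. This is the delicate bookkeeping step and the main obstacle — getting the asymptotics of the transcendental roots right and matching them against the $k$-dependent, $r$-dependent thresholds.

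For (3), continuity/monotonicity: I would argue that the eigenvalues of the Robin problem depend continuously on $r\in(0,1)$, so $\text{MI}(Q)$ is locally constant except where an eigenvalue crosses $0$; one then shows that near some intermediate radius (e.g. computing at a convenient value, or using that the $k=0$ oscillatory modes and $k\ge 2$ modes are all strictly positive there while exactly the $n+2$ modes from (1) are negative) the count equals $n+2$, and by openness this persists on an interval $(a,b)$. The cleanest route is: show the map $r\mapsto \text{MI}(Q)$ takes the value $n+2$ somewhere (by the explicit mode analysis at, say, the $r$ where the first would-be extra eigenvalue is still positive), then invoke lower semicontinuity of the index together with the already-proven lower bound $n+2$ to conclude it is constant $=n+2$ on a neighborhood. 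The main obstacle throughout is the careful counting of transcendental roots in items (2) and (3) and ensuring no "extra" negative eigenvalues sneak in at intermediate radii; the algebra in setting up the mode decomposition is routine given the excerpt.
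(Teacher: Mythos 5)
Your mode decomposition and counting strategy are essentially the same as the paper's, and your final counts for parts (1) and (2) come out right, but two steps need repair. First, $f\equiv 1$ does not satisfy the Robin condition $f'(T)=f(T)/T$ (it would require $0=1/T$), so it is not an eigenfunction and cannot be one of the two $k=0$ negative modes. The correct identification is: the odd $\beta=0$ mode $f(t)=t$, which does satisfy $f(T)/f'(T)=T$, together with the unique even $\beta<0$ mode solving $\coth(\sqrt{-\beta}\,T)=\sqrt{-\beta}\,T$ (there is no odd $\beta<0$ mode, since that would require $\tanh x = x$ for some $x>0$). Adding the $\coth$-branch at $k=1$ with multiplicity $m_1=n$ gives the $n+2$ negative eigenvalues for part (1). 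Your count happens to be rescued because you also listed the $\coth$-branch as a contributor, but your explicit attribution of one eigenvalue to $f=1$ is wrong.

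Second, part (3) is a plan rather than a proof. Continuity of eigenvalues in $r$ plus the lower bound from (1) make $\{r:\text{MI}(Q)=n+2\}$ an open set, but you have not shown it is nonempty; to do so you must carry out the very computation the continuity argument was meant to sidestep, namely verify that for some $r$ no additional mode is negative. Concretely, the only other candidates are the $\coth$-branch with $k\ge 2$ and the oscillatory $\beta>0$ branch with $k=0$, and excluding both requires $\frac{r^2T_0^2}{1-r^2}<n+1$ and $\frac{r^2T_1^2}{1-r^2}>n-1$, where $T_0$ is the root of $\coth T_0=T_0$ and $T_1$ is the smallest positive root of $\cot T_1=-T_1$. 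Since $T_1>T_0$, the resulting interval $\frac{1}{1+T_1^2/(n-1)}<r^2<\frac{1}{1+T_0^2/(n+1)}$ is nonempty, and this explicit check — which the paper performs — is what your outline still needs to close part (3).
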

\begin{proof}
	Recall that $\text{MI}(Q)$ is the number of negative eigenvalues $\lambda$ for 
	\[\lambda=\beta+\frac{1}{r^2}(k-1)(k+n-1),\]
	for $k=0, 1, 2...$.  Next, we exhaust all possible cases.
	
	\textbf{Case 1.} If $\beta=0$ then choosing $k=0$ yields $\lambda<0$. If $k\geq 1$ then $\lambda\geq 0$.
	
	\textbf{Case 2.} If $\beta<0$, choosing $k=0, 1$ yields $\lambda<0$. Then, let $T_0$ be the unique positive number such that 
	\[ \coth(T_0)=T_0.\]
	Then, for $k\geq 2$, 
	\[\lambda=\frac{1}{r^2}(k-1)(k+n-1)-\frac{T_0^2}{T^2}.\]
    $\lambda<0$ for all $k$ such that 
	\[(k-1)(k+n-1)<\frac{r^2 T_0^2}{1-r^2}.\]
	Therefore, $r\rightarrow 1$, $\text{MI}(Q)\rightarrow \infty$.  
		 
	 \textbf{Case 3.} If $\beta>0$ we first note that there are infinitely many periodic values satisfying the equation $\tan(\sqrt{\beta}T)=\sqrt{\beta}T \text{ or} \cot(\sqrt{\beta}T)=-\sqrt{\beta}T$. Next one observes that, for $k\geq 1$, $\lambda>0$. For $k=0$, 
	 \[\lambda=\beta-\frac{n-1}{r^2}.\]
	 Therefore,  $r\rightarrow 0$, $\text{MI}(Q)\rightarrow \infty$. Let $T_1$ be the smallest real positive number such that $\cot(T_1)=-T_1$.  $\text{MI}(Q)=n+2$ if and only if
	\begin{align*}
	n+1 &>\frac{r^2 T_0^2}{1-r^2},\\
	n-1 &<\frac{r^2 T_1^2}{1-r^2}.
	\end{align*}
	Equivalently, 
	\[\frac{1}{1+\frac{T_1^2}{n-1}}<r^2 <\frac{1}{1+\frac{T_0^2}{n+1}}. \]
\end{proof}
\begin{remark}
	Numerically, $T_0 \approx1.19968$ and $T_1\approx 2.79838$. 
\end{remark}
\begin{lemma}
	\label{nosolution}
	Let $x=T\sqrt{n-1}/r$, if $\cos(x)+x\sin{x}=0$ then the system 
	\begin{align*}
	J u &=-1,\\
	\nabla_\eta u &=qu
	\end{align*}
	has no solution. 
\end{lemma}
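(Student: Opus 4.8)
The plan is to argue by contradiction, reducing the whole statement to the production of a single explicit Jacobi field. Suppose $u$ solved $Ju=-1$ on $\Sigma$ with $\nabla_\eta u=qu$ on $\partial\Sigma$, and let $h$ be any function with $Jh=0$ on $\Sigma$ and $\nabla_\eta h=qh$ on $\partial\Sigma$. Green's identity gives
\[
\int_\Sigma\bigl(h\,Ju-u\,Jh\bigr)\,d\mu=\int_{\partial\Sigma}\bigl(h\,\nabla_\eta u-u\,\nabla_\eta h\bigr)\,ds ,
\]
and since $J=\Delta+p$ the zeroth-order terms cancel on the left, while on the right $h\,\nabla_\eta u-u\,\nabla_\eta h=h(qu)-u(qh)=0$. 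Hence $-\int_\Sigma h\,d\mu=\int_\Sigma h\,Ju\,d\mu=\int_\Sigma u\,Jh\,d\mu=0$. (This is precisely the solvability obstruction furnished by the Fredholm alternative of Theorem~\ref{FredholmRobin} applied with $f=-1$, $g=0$, so the same conclusion holds for a weak solution.) Thus it suffices to exhibit a nonzero $h$ solving the homogeneous Robin problem with $\int_\Sigma h\,d\mu\neq0$.

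To build $h$, I would separate variables on $\Sigma=[-T,T]\times\mathbb{S}^{n-1}$ and use the constant spherical mode, so $h=h(t)$. Then $Jh=0$ becomes $h''+\tfrac{n-1}{r^2}h=0$; setting $\beta:=\tfrac{n-1}{r^2}$ we have $\sqrt{\beta}=\sqrt{n-1}/r$ and $T\sqrt{\beta}=x$, and the even solution is $h(t)=\cos(\sqrt{\beta}\,t)$. On $\partial\Sigma$ one has $\nabla_\eta=\partial_{\pm t}$ and $q=1/T$, and (using that $h$ is even, so the conditions at $t=T$ and $t=-T$ coincide) the Robin condition $h'(T)=\tfrac1T h(T)$ becomes $-T\sqrt{\beta}\sin(\sqrt{\beta}T)=\cos(\sqrt{\beta}T)$, i.e.\ exactly $\cos x+x\sin x=0$, which holds by hypothesis. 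Hence $h(t)=\cos(\sqrt{\beta}\,t)$ is a smooth, nontrivial solution of the homogeneous Robin problem.

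Finally I would compute, with area element $d\mu=r^{n-1}\,dt\,d\sigma_{\mathbb{S}^{n-1}}$,
\[
\int_\Sigma h\,d\mu=r^{n-1}\,\mathrm{Vol}(\mathbb{S}^{n-1})\int_{-T}^{T}\cos(\sqrt{\beta}\,t)\,dt=r^{n-1}\,\mathrm{Vol}(\mathbb{S}^{n-1})\,\frac{2\sin x}{\sqrt{\beta}} .
\]
Since $\cos x+x\sin x=0$ forbids $\sin x=0$ (that would force $\cos x=0$ as well), this integral is nonzero, contradicting $-\int_\Sigma h\,d\mu=0$; hence the system has no solution. I do not expect a genuine analytic obstacle here: the only points requiring care are the sign of $\eta$ at the two ends $t=\pm T$ and the check that it is the \emph{even} Jacobi field (not the odd one, whose boundary condition is instead $\tan x=x$) that is governed by $\cos x+x\sin x=0$.
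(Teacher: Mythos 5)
Your proof is correct and is essentially the paper's argument: both exhibit the even Jacobi field $h(t)=\cos(\sqrt{\beta}\,t)$ with $\beta=(n-1)/r^2$, observe that the Robin condition reduces to $\cos x+x\sin x=0$, and then invoke the Fredholm alternative of Theorem~\ref{FredholmRobin} (with $f=-1$, $g=0$) because $\int_\Sigma h\,d\mu\neq 0$. Your write-up is actually a bit more complete than the paper's: you derive the orthogonality obstruction explicitly via Green's identity, carry out the boundary check at both ends $t=\pm T$, and verify $\int_\Sigma h\neq 0$ by showing $\sin x\neq 0$ (since $\sin x=0$ together with the hypothesis would force $\cos x=0$), whereas the paper merely asserts the integral is nonzero.
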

\begin{remark}
	It is noted that as $r$ varies from $0$ to $1$, \[x=T\sqrt{n-1}/r=\sqrt{n-1}\frac{\sqrt{1-r^2}}{r}\] goes from $+\infty$ to $0$. The function $\cos(x)+x\sin{x}$ fluctuates and assumes all possible real values when $x>0$.
\end{remark}

\begin{proof}
	By our separation of variables earlier, $\lambda=0$ (when $\beta=0$ and $k=1$) is an eigenvalue of 
	\begin{align*}
	J u +\lambda u&=0,\\
	\nabla_\eta u &=qu.
	\end{align*}
	Under the hypothesis, let $\sqrt{\beta}=\sqrt{n-1}/r$, then $\cot(\sqrt{\beta}T)=-\sqrt{\beta}T$ and $u=\cos(\sqrt{\beta}t)$ is an eigenfunction with eigenvalue $0$ of the homogeneous system above. It is observed that 
	\[\int_\Sigma u d\mu \neq 0.\]
	So, by the Fredholm alternative Theorem \ref{FredholmRobin}, the result follows. 
\end{proof}
\begin{proposition}
	\label{weakindexcyl}
	Let $x=T\sqrt{n-1}/r$, the type-I Morse index of a cylinder is
\[	\begin{cases}
		\text{MI}(Q) \text{ if either } \cos(x)+x\sin{x}=0 \text{ or }  x<\frac{\sin{x}}{x\sin{x}+\cos{x}}, \\
		\text{MI}(Q)-1 \text{ otherwise.}
	\end{cases} 
\]
\end{proposition}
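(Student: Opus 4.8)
The plan is to read off the type-I index from our characterization in Theorem \ref{app1}. Since $\Omega=\mathbb{B}^{n+1}$ is flat we have $p=|\A|^2$ and $\Delta+p=J$, so that theorem says the type-I Morse index of the cylinder $Z$ equals $\text{MI}(Q)-1$ exactly when the system
\[
Ju=-1\ \text{on}\ \Sigma,\qquad \nabla_\eta u=qu\ \text{on}\ \partial\Sigma,\qquad \int_\Sigma u\,d\mu\le 0
\]
is solvable, and equals $\text{MI}(Q)$ otherwise. Everything therefore reduces to analyzing this system with the cylinder data computed above: $J=\partial_t^2+r^{-2}\Delta_{\mathbb{S}^{n-1}}+(n-1)r^{-2}$, $q=T^{-1}$ with $T=\sqrt{1-r^2}$, $\nabla_\eta=\partial_{\pm t}$ on the two boundary spheres $\{t=\pm T\}$, and $d\mu=r^{n-1}\,dt\,d\omega_{\mathbb{S}^{n-1}}$. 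Set $s:=\sqrt{n-1}/r$, so that $x=sT$.

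First I would dispose of the degenerate case. If $\cos x+x\sin x=0$, Lemma \ref{nosolution} already tells us that $Ju=-1$, $\nabla_\eta u=qu$ has no solution at all, hence the type-I index is $\text{MI}(Q)$; this is the first alternative in the statement. So assume $\cos x+x\sin x\neq 0$. Because the inhomogeneity $-1$ is constant on every $\mathbb{S}^{n-1}$-slice, and $J$ together with the Robin operator commutes with rotations of $\mathbb{S}^{n-1}$, I look for a solution of the form $u=f(t)$; the problem becomes the ODE $f''+s^2f=-1$ on $[-T,T]$ with $f'(T)=T^{-1}f(T)$ and $-f'(-T)=T^{-1}f(-T)$. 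The even trial function $f(t)=A\cos(st)-s^{-2}$ satisfies both boundary conditions simultaneously, leaving the single scalar equation $A(\cos x+x\sin x)=s^{-2}$, which is solvable precisely because $\cos x+x\sin x\neq 0$ and yields $A=s^{-2}(\cos x+x\sin x)^{-1}$. By elliptic regularity the resulting $u_0:=f$ is smooth, so a solution exists in this case.

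Next I would check that the sign of $\int_\Sigma u\,d\mu$ is the same for every solution, which legitimizes testing only $u_0$. The solutions of $Ju=-1$, $\nabla_\eta u=qu$ form the coset $u_0+\text{Ker}(\mathcal{Q})$, where $\text{Ker}(\mathcal{Q})$ is the $\lambda=0$ Robin eigenspace, and this is genuinely nontrivial (for instance $t$ times a coordinate of $z$ lies in it). However, since a solution exists, the Fredholm alternative of Theorem \ref{FredholmRobin} applied with $f=-1$, $g=0$ forces $\int_\Sigma v\,d\mu=0$ for every $v\in\text{Ker}(\mathcal{Q})$, hence $\int_\Sigma u\,d\mu=\int_\Sigma u_0\,d\mu$ for every solution. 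Computing with $\int_{-T}^T\cos(st)\,dt=2s^{-1}\sin x$ and $sT=x$,
\[
\int_\Sigma u_0\,d\mu=\frac{2T\,r^{\,n-1}\,|\mathbb{S}^{n-1}|}{s^{2}}\left(\frac{\sin x}{x(\cos x+x\sin x)}-1\right),
\]
whose prefactor is positive; hence $\int_\Sigma u_0\,d\mu\le 0$ if and only if $\frac{\sin x}{x(\cos x+x\sin x)}\le 1$. Multiplying by $x>0$ and tracking the sign of $\cos x+x\sin x$ — which drops out of the equivalence either way — this becomes $x\ge\frac{\sin x}{x\sin x+\cos x}$. Together with the degenerate case and Theorem \ref{app1}, this gives exactly the claimed dichotomy: the type-I index is $\text{MI}(Q)-1$ when $\cos x+x\sin x\neq0$ and $x\ge\frac{\sin x}{x\sin x+\cos x}$, and $\text{MI}(Q)$ otherwise.

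The routine content is the ODE and the integral evaluation; the step that really needs care is the one just described: because the Robin problem $Ju=-1$ is not uniquely solvable (the Jacobi fields $t z_j$ lie in $\text{Ker}(\mathcal{Q})$), one must rule out that some \emph{other} solution could satisfy $\int_\Sigma u\,d\mu\le 0$ when the symmetric one does not, and this is precisely where invoking the full Fredholm alternative — rather than merely exhibiting one solution — is essential. The remaining delicate point is the sign bookkeeping in the last step, verifying that $\frac{\sin x}{x(\cos x+x\sin x)}\le 1$ rearranges to $x\ge\frac{\sin x}{x\sin x+\cos x}$ irrespective of the sign of $\cos x+x\sin x$.
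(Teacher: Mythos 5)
Your proof is correct and follows the same route as the paper: reduce via Theorem \ref{app1}, dispose of the degenerate case with Lemma \ref{nosolution}, exhibit the even solution $f(t)=A\cos(st)-s^{-2}$ with $A(\cos x+x\sin x)=s^{-2}$, and convert the sign of $\int_\Sigma u$ into the sign of $\frac{\sin x}{\cos x+x\sin x}-x$. The one place where you go beyond the paper is actually a genuine improvement: you verify, via the Fredholm alternative (Theorem \ref{FredholmRobin}), that once a solution of $Ju=-1$, $\nabla_\eta u=qu$ exists, every element of $\text{Ker}(\mathcal{Q})$ integrates to zero, so all solutions share the same value of $\int_\Sigma u$. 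Since $\text{Ker}(\mathcal{Q})$ is nontrivial here (it contains $tz_j$), this is needed to conclude that when $\int_\Sigma u_0>0$ there really is \emph{no} solution with non-positive integral, a point the paper passes over in silence when it simply "applies Theorem \ref{app1} again." One small remark: the last sentence about tracking the sign of $\cos x+x\sin x$ is superfluous — after writing $\int_\Sigma u_0 = C\left(\frac{\sin x}{\cos x+x\sin x}-x\right)$ with $C>0$, the condition $\int_\Sigma u_0\le 0$ is identically $x\ge\frac{\sin x}{x\sin x+\cos x}$; you never need to multiply through by $\cos x+x\sin x$.
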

\begin{proof}
	If $\cos(x)+x\sin{x}=0$ then the result follows from Lemma \ref{nosolution} and Theorem \ref{app1}. 
	
	Otherwise, $\cos(x)+x\sin{x}\neq 0$ and let 
	\[u= c\cos(t\sqrt{n-1}/r)-\frac{r^2}{n-1}.\]
	It is readily verified that $u$ solves the system 
	\begin{align*}
	J u &=-1,\\
	\nabla_\eta u &=qu
	\end{align*} 
	for $c(x\sin{x}+\cos{x})=\frac{r^2}{n-1}$. As $u_{tt}+\frac{n-1}{r^2}u=-1$,
	\[\frac{n-1}{r^2}\int_\Sigma u d\mu=-2u'(T)-2T.\] 
	We have
	\begin{align*}
	\sqrt{n-1}/r(u'(T)+T) &=-c\frac{n-1}{r^2}\sin{x}+x\\
	&= -\frac{\sin{x}}{x\sin{x}+\cos{x}}+x. 
	\end{align*}  
	Applying Theorem \ref{app1} again yields the conclusion. 
\end{proof}

Next we check how Theorem \ref{thm1.6} is applicable in this case. For these cylinders,
\begin{align*}
\<X, \nu\> &= r.
\end{align*}
Thus components of the matrix from Theorem \ref{thm1.6} become
\begin{align*}
\Upsilon_{ij} &= \int_{\Sigma} n\frac{n-1}{nr^2}\<(n-\frac{n-1}{r}r)X+ (n(-r)+\frac{n-1}{2r}(t^2+r^2+1))\nu, e_i\>\<X, e_j\>\\
&=\frac{n-1}{r^2}\int_{\Sigma} \<X+ \frac{(n-1)(t^2+1)-(n+1)r^2}{2r}\nu, e_i\>X_j\\
&=\frac{n-1}{r^2}\int_{\Sigma}X_iX_j +  \frac{(n-1)(t^2+1)-(n+1)r^2}{2r}\nu_i X_j. 
\end{align*}
For $X_1=t, X_i=rz_{i-1}$, $i>1$, $\nu_1=0$, $\nu_i=z_{i-1}$ and spherical symmetry, it is clear that for $i\neq j$
\[\Upsilon_{ij}=0.\]
It remains to calculate the diagonal terms. First, we have
\begin{align*}
\Upsilon_{11} &=\frac{n-1}{r^2}\int_{\Sigma} t^2 >0.
\end{align*}
Then, for $j=i+1>1$,
\begin{align*}
\Upsilon_{jj} &=\frac{n-1}{r^2}\int_{\Sigma} r^2 z_i^2+ \frac{(n-1)(t^2+1)-(n+1)r^2}{2r}rz_i^2  \\
&=\frac{(n-1)^2}{2r^2}\int_{\Sigma}(t^2+1-r^2) r^2 z_i^2 >0. 
\end{align*}
Thus, Theorem \ref{thm1.6} implies the type I+II index is at least $n+1$. 

\begin{proposition} Let $a<r<b$ as in part (3) of Proposition \ref{prop-cyl}. Let $x=T\sqrt{n-1}/r$ and assume that   $\cos(x)+x\sin{x}\neq 0$ and $x>\frac{\sin{x}}{x\sin{x}+\cos{x}}$. Then, the type I+II index is equal to the type I index and equal to $n+1$. 
\end{proposition}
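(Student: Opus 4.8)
The plan is to squeeze the type-I+II index between $n+1$ and $n+1$, using three facts already established: the exact value of the type-I index, the lower bound coming from the matrix $\Upsilon$, and the trivial monotonicity between the two constrained indices.

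First I would invoke Proposition \ref{weakindexcyl}. Since the standing assumptions are $\cos x + x\sin x \neq 0$ and $x > \frac{\sin x}{x\sin x + \cos x}$, we are in its second case, so the type-I index of the cylinder equals $\text{MI}(Q)-1$. Because $a<r<b$ is precisely the interval produced in Proposition \ref{prop-cyl}(3), we have $\text{MI}(Q)=n+2$, hence the type-I index is exactly $n+1$. By the Remark following Definition \ref{typeIIIMIdef}, the type-I+II index is at most the type-I index, so it is at most $n+1$.

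For the matching lower bound I would use the computation carried out just before this proposition, which shows that for a cylinder $\Upsilon$ is diagonal with every diagonal entry strictly positive (the inequality $t^2+1-r^2 \ge 1-r^2 > 0$ on $\Sigma$ being exactly what is needed for the entries with index $>1$). Thus $\Upsilon$ is positive definite and $\ell = n+1$. To apply part (3) rather than part (2) of Theorem \ref{thm1.6}, I would check that the cylinder is not $|\circA|^2$-scale equivalent to a hyper-planar domain: here $|\circA|^2 = \frac{n-1}{nr^2}$ is a nonzero constant, so $|\circA|^2 X$ lying on a hyperplane would force $X(\Sigma)$ itself to lie on a hyperplane, which is impossible for a cylinder of positive radius. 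Hence Theorem \ref{thm1.6}(3) gives that the type-I+II index is at least $\ell = n+1$.

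Combining the two bounds yields that the type-I+II index equals $n+1$, and therefore coincides with the type-I index. The only step needing any real care is the dichotomy in Theorem \ref{thm1.6}: one must confirm that the cylinder falls in case (3) and not case (2), for which the constancy and non-vanishing of $|\circA|^2$ together with the non-planarity of the cylinder are exactly what is used; everything else is a direct assembly of Propositions \ref{prop-cyl} and \ref{weakindexcyl} with the matrix computation preceding the statement.
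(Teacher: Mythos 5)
Your proof is correct and follows essentially the same sandwich argument as the paper: the type-I+II index is bounded above by the type-I index, which Propositions \ref{prop-cyl}(3) and \ref{weakindexcyl} pin to $n+1$, and bounded below by $n+1$ via the positive-definiteness of the diagonal matrix $\Upsilon$ and Theorem \ref{thm1.6}. Your explicit check that the cylinder falls in case (3) of Theorem \ref{thm1.6} (because constant nonzero $|\circA|^2$ makes $|\circA|^2$-hyper-planarity equivalent to the cylinder itself being planar, which is false) is a detail the paper leaves implicit, so you have actually closed a small gap rather than introduced one.
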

\begin{remark}
	So in this case, the estimate from Theorem \ref{thm1.6} is sharp. 
\end{remark}
\begin{proof}
	Let $m_{I+II}$ and  $m_{I}$ be the indices with respect to constrains I+II and I respectively. First, it is clear that 
	\[m_{I+II}\leq m_I.\]
	By the calculation above and Theorem \ref{thm1.6}, $m_{I+II}$ is at least $n+1$. By Propositions \ref{prop-cyl} and \ref{weakindexcyl}, $m_I$ is at most $n+1$. The result then follows. 
\end{proof}
\begin{remark}
It is clearly possible to do an analogous analysis between type I+II and type II indices. We leave it for the reader. 	
\end{remark}

\section{Capillary Minimal Hypersurfaces}
In this section, we study the case $H=0$. Here, it is expected that the appropriate calculation earlier will be simplified significantly and we can obtain more precise results. First, we have the following. 
  
\begin{corollary}\label{cor7.5}
	Assume that  $X: \Sigma\to \mathbb{R}^{n+1}$ is a properly immersed  capillary minimal hypersurface in the Euclidean unit ball $\mathbb{B}^{n+1}$. If it is not $|\circA|^2$-scale to a hyper-planar domain and
	\[
	\int_\Sigma |\A|^2 \langle X,v\rangle^2\geq \int_\Sigma |\A|^2\cos^2\theta, 
	\]
	for any unit vector $v\in \mathbb{R}^{n+1}$,   then either  it is totally geodesic or it has type-I+II Morse index bigger than  $n+1$. 
\end{corollary}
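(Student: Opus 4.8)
The plan is to specialize the index estimate of Theorem~\ref{thm1.6} (equivalently Theorem~\ref{thm1}) to the minimal case $H=0$, to show that the stated integral hypothesis makes the matrix $\Upsilon$ positive semi-definite so that $\ell=n+1$, and then to read off the conclusion from Theorem~\ref{thm1.6}. First I would carry out the $H=0$ reduction of Proposition~\ref{prop3.1}: with $\phi=\frac12(|X|^2+1)$ and $H=0$ one has $\Delta\phi=n$, $Z=nX$ and $\Phi=-n\langle X,\nu\rangle-n\cos\theta$, so the test functions of Theorem~\ref{thmindex} become $u_i=n(X_i+\cos\theta\,\nu_i)$ with $Ju_i=\phi_i:=n|\A|^2X_i$ (and the Robin condition holds by Proposition~\ref{prop3.3}), while the symmetric form $\Upsilon$ on $\mathbb{R}^{n+1}$ simplifies to
\begin{align*}
\Upsilon(v,v)&=n^2\int_\Sigma|\A|^2\,\langle X,v\rangle\,\langle X+\cos\theta\,\nu,v\rangle\\
&=n^2\int_\Sigma|\A|^2\langle X,v\rangle^2+n^2\cos\theta\int_\Sigma|\A|^2\langle X,v\rangle\langle\nu,v\rangle,\qquad |v|=1.
\end{align*}

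The heart of the argument is to check that $\Upsilon(v,v)\ge0$ for every unit vector $v$. By Cauchy--Schwarz the cross term has modulus at most $n^2|\cos\theta|\big(\int_\Sigma|\A|^2\langle X,v\rangle^2\big)^{1/2}\big(\int_\Sigma|\A|^2\langle\nu,v\rangle^2\big)^{1/2}$; since $\langle\nu,v\rangle^2\le|v|^2=1$ the second factor is bounded by $\big(\int_\Sigma|\A|^2\big)^{1/2}$, and the hypothesis $\int_\Sigma|\A|^2\langle X,v\rangle^2\ge\cos^2\theta\int_\Sigma|\A|^2$ gives $|\cos\theta|\big(\int_\Sigma|\A|^2\big)^{1/2}\le\big(\int_\Sigma|\A|^2\langle X,v\rangle^2\big)^{1/2}$. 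Multiplying, the cross term has modulus at most $n^2\int_\Sigma|\A|^2\langle X,v\rangle^2$, the leading term, so $\Upsilon(v,v)\ge0$. Hence $\Upsilon$ is positive semi-definite and all of its $n+1$ eigenvalues are nonnegative, that is, $\ell=n+1$.

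It remains to invoke Theorem~\ref{thm1.6}. Either $\Sigma$ is totally geodesic, which is the first alternative, or $\Sigma$ is not totally geodesic, so $|\A|^2\not\equiv0$ and, just as in the proof of Theorem~\ref{thm1}, the functions $\phi_i=n|\A|^2X_i$ are linearly independent in $L^2(\Sigma)$ (if they were dependent, a piece of $\Sigma$ would lie in an affine hyperplane and hence be totally geodesic, a contradiction); together with the assumption that $\Sigma$ is not $|\circA|^2$-scale equivalent to a hyper-planar domain, this makes the full $(n+1)$-dimensional family $\{\bar\phi_i\}$ linearly independent from the two constraint functionals, so case~(3) of Theorem~\ref{thm1.6} applies and the type-I+II Morse index is at least $\ell=n+1$; it cannot be zero, since by case~(1) a type-I+II stable minimal hypersurface is totally umbilical, hence totally geodesic. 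The step that requires the most care is the inequality chain of the middle paragraph — keeping track of the sign of $\cos\theta$, and observing that the degenerate case $\int_\Sigma|\A|^2\langle X,v\rangle^2=0$ for some $v\neq0$ cannot occur once the $\phi_i$ are known to be independent — together with verifying that ``not $|\circA|^2$-scale equivalent'' is exactly the non-degeneracy needed for Theorem~\ref{thm1.6}(3) to deliver the full value $\ell=n+1$ rather than $\ell-1$.
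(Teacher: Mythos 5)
Your proof is correct and follows essentially the same route as the paper's: both reduce Proposition~\ref{prop3.1} to the $H=0$ case, apply the Cauchy--Schwarz inequality (with the pointwise bound $|\langle\nu,v\rangle|\le 1$ and the constancy of $\cos\theta$) to show every eigenvalue of $\Upsilon$ is nonnegative, and then invoke case~(3) of Theorem~\ref{thm1.6}. The only cosmetic difference is that the paper works in a basis diagonalizing $\Upsilon$ and estimates the diagonal entries $\lambda_i$ directly, whereas you estimate the quadratic form $\Upsilon(v,v)$ for an arbitrary unit vector $v$, which is an equivalent formulation of the same computation.
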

\begin{proof}Assume that $\Sigma$ is not totally geodesic. Then there exists a coordinate $X_1, X_2, \cdots, X_{n+1} $, such that $|\A|^2 X_1, |\A|^2 X_2, \cdots, |\A|^2 X_{n+1}$ are linearly independent. 
	From (\ref{eq-lam}) we know 
	
	\begin{equation*}
	\begin{split}
	\lambda_i&=\int_\Sigma (n|\A|^2)(n X_i^2+n\cos\theta X_i\nu_i).
	\end{split}
	\end{equation*}
	By Cauchy inequality and $|\nu_i|\le 1$, we have
	\[
	|\int_\Sigma |\A|^2\cos\theta X_i\nu_i|\le \int_\Sigma |\A|^2|\cos\theta| |X_i|\le \left( \int_\Sigma |\A|^2\cos^2\theta\right)^{\frac12} \left( \int_\Sigma |\A|^2 X_i^2\right)^{\frac12}.
	\]
	Hence 
	\[
	\begin{split}
	\lambda_i&=n^2\left[\int_\Sigma |\A|^2 X_i^2+\int_\Sigma |\A|^2\cos\theta X_i\nu_i\right]
	\\
	&\ge n^2\left[\int_\Sigma |\A|^2 X_i^2-\left( \int_\Sigma |\A|^2\cos^2\theta\right)^{\frac12} \left( \int_\Sigma |\A|^2 X_i^2\right)^{\frac12}\right]\\
	&= n^2\left(\int_\Sigma |\A|^2 X_i^2\right)^{\frac12}\left[\left(\int_\Sigma |\A |^2\langle X,e_i\rangle^2\right)^{\frac12}-\left( \int_\Sigma |\A|^2\cos^2\theta\right)^{\frac12}  \right]\\
	&\geq0.
	\end{split}
	\]
	
	From Theorem \ref{thm1}, we know that the type-I+II Morse index is at least $n+1$. The proof is complete.
\end{proof}
	
\begin{corollary}
	Assume that  $X: \Sigma\to \mathbb{R}^{n+1}$ is an embedded capillary minimal hypersurface in the Euclidean unit ball $\mathbb{B}^{n+1}$. If it is not $|\circA|^2$-scale to a hyper-planar domain and is a polar-graph then either  it is totally geodesic or it has type-I+II Morse index bigger than  $n+1$. 
\end{corollary}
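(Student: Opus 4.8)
The plan is to deduce the statement from Corollary \ref{cor7.5}. Since the hypothesis already provides that $\Sigma$ is not $|\circA|^2$-scale to a hyper-planar domain, it suffices to verify, for our embedded polar graph, the one remaining assumption of that corollary: namely that
\[
\int_\Sigma |\A|^2\langle X,v\rangle^2\ \geq\ \int_\Sigma |\A|^2\cos^2\theta\qquad\text{for every unit vector }v\in\mathbb{R}^{n+1}.
\]
Once this is established, Corollary \ref{cor7.5} gives the dichotomy directly; since a minimal totally umbilical hypersurface is totally geodesic, the conclusion reads that either $\Sigma$ is totally geodesic or its type-I+II Morse index is at least $n+1$.

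The polar graph hypothesis enters through a pointwise lower bound on $\langle X,\nu\rangle$. An embedded polar graph is, in particular, star-shaped about the origin, so the radial direction $X$ is transverse to $\Sigma$ at every point; hence $w:=\langle X,\nu\rangle$ never vanishes on $\Sigma$, and, $\Sigma$ being connected, it has a constant sign. By \eqref{mu1} we have $w=-\cos\theta$ along $\partial\Sigma$, so after a choice of unit normal we may assume $w$ and $-\cos\theta$ carry that sign; in particular $\cos\theta\neq 0$, so the degenerate free-boundary angle does not occur for a polar graph. Because $H=0$, Corollary \ref{lem3.1} gives $Jw=0$, i.e.\ $\Delta w=-|\A|^2 w$; thus $|w|>0$ is a superharmonic function on $\Sigma$ with the constant boundary value $|\cos\theta|$, and the minimum principle yields
\[
\langle X,\nu\rangle^2\ \geq\ \cos^2\theta,\qquad\text{so in particular}\qquad |X|^2\ \geq\ \cos^2\theta\quad\text{on }\Sigma.
\]

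To upgrade this pointwise estimate to the integral inequality for all unit $v$, I would combine it with a boundary identity for the cross term. Since $X_i$ is harmonic and $\langle\nu,v\rangle$ is a Jacobi field of $J$ (because $\Delta\langle\nu,v\rangle=-|\A|^2\langle\nu,v\rangle$), Green's identity rewrites $\int_\Sigma|\A|^2\langle X,v\rangle\langle\nu,v\rangle$ as an integral over $\partial\Sigma$, which one makes explicit using $\bar\nabla_\eta X=\eta$, Remark \ref{remak2}, the umbilicity of $\partial\mathbb{B}^{n+1}$, and \eqref{mu1}; fed together with $|X|^2\geq\cos^2\theta$ into the Cauchy--Schwarz step already used in the proof of Corollary \ref{cor7.5}, this is intended to produce $\int_\Sigma|\A|^2\langle X,v\rangle^2\geq\cos^2\theta\int_\Sigma|\A|^2$, completing the reduction.

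The main obstacle is precisely this last step. The bound $|X|^2\geq\cos^2\theta$ controls the quadratic form $v\mapsto\int_\Sigma|\A|^2\langle X,v\rangle^2$ only along the varying direction $X(p)/|X(p)|$ and in trace, whereas Corollary \ref{cor7.5} requires a lower bound on its smallest eigenvalue; so a genuinely integral argument, squeezing additional positivity out of the boundary identity for the cross term and out of star-shapedness, is needed, and this is where I expect the difficulty to concentrate. A secondary point is the bookkeeping of orientation and of the intersecting angle (the sign of $\cos\theta$ forced by star-shapedness), and checking that the non-umbilical hypothesis of Corollary \ref{cor7.5} survives, so that the index conclusion rather than merely the totally-geodesic one remains available.
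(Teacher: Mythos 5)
Your sketch follows the paper's own route: reduce to Corollary \ref{cor7.5} and use the Jacobi equation $J\<X,\nu\>=0$ together with the polar-graph hypothesis and the minimum principle to obtain the pointwise bound $\<X,\nu\>^2\geq\cos^2\theta$ on $\Sigma$. That much is correct and agrees with the paper (with the small caveat, present in both accounts, that one should invoke a strong minimum principle for the superharmonic function $|\<X,\nu\>|$, and the degenerate case where $\<X,\nu\>$ attains an interior minimum forces $|\A|^2\equiv0$, i.e.\ falls into the totally geodesic alternative).

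The difficulty you flag at the end, however, is a genuine gap, and it is present in the paper's proof as well. Corollary \ref{cor7.5} requires $\int_\Sigma|\A|^2\<X,v\>^2\geq\cos^2\theta\int_\Sigma|\A|^2$ for \emph{every} unit vector $v$, because its proof diagonalizes $\Upsilon$ in an a priori unknown orthonormal basis and then applies the Cauchy--Schwarz chain $\bigl|\int|\A|^2\cos\theta\,X_i\nu_i\bigr|\le\bigl(\int|\A|^2\cos^2\theta\bigr)^{1/2}\bigl(\int|\A|^2X_i^2\bigr)^{1/2}$ in each basis direction. The pointwise bound $\<X,\nu\>^2\geq\cos^2\theta$ controls $\<X,\cdot\>$ only along the \emph{varying} normal direction; summing over an orthonormal frame yields $|X|^2\geq\cos^2\theta$, hence $\int|\A|^2|X|^2\geq\cos^2\theta\int|\A|^2$, which is a bound on the \emph{trace} of the Gram-type matrix $\bigl(\int|\A|^2X_iX_j\bigr)_{ij}$ and says nothing about its smallest eigenvalue; in particular a trace bound cannot force all $n+1$ diagonal entries in the diagonalizing basis to exceed $\cos^2\theta\int|\A|^2$. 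The paper's line ``the hypothesis of Corollary \ref{cor7.5} is verified'' is not justified by the argument given, and your Green's-identity rewriting of the cross term $\int|\A|^2\<X,v\>\<\nu,v\>$ as a boundary integral, as you note yourself, does not by itself yield the needed directional lower bound either (the resulting boundary integrand $(1+\A(\eta,\eta)\cos\theta)\<\nu,v\>\<\eta,v\>-\A(\eta,\eta)\sin\theta\<\eta,v\>^2$ has no evident sign). So you have in fact located a lacuna in the published argument; to repair it one would need either a genuinely directional estimate on $\<X,v\>$ for polar graphs, a reformulation of Corollary \ref{cor7.5} that only demands a trace-type hypothesis and still yields $n+1$ nonnegative eigenvalues of $\Upsilon$, or a different control of the off-diagonal term that avoids the ``for all $v$'' quantifier.
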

\begin{proof}
	We can choose $\nu$ such that the support function $\<X,\nu\>$ is positive at some point. We note that, by (\ref{mu0}), along $\p \Sigma$,
	\[|\<X,\nu\>|=|\cos\th|. \]
	Furthermore, by Corollary \ref{lem3.1},
	\[ (\Delta+|\A|^2)\<X,\nu\>=0.\] 
	Therefore, if $\<X,\nu\>$ attains a local minimum in the interior of $\Sigma$, it must be non-positive. On the other hand, since $\Sigma$ is a polar-graph, its support function $\<X,\nu\>$ must be non-negative and positive in the interior of $\Sigma$. As a consequence, $\<X,\nu\>$ attains its global mimimum on the boundaries. Therefore, everywhere on $\Sigma$,
		\[|\<X,\nu\>|\geq |\cos\th|. \]  
	The hypothesis of Corollary \ref{cor7.5} is verified and the result follows. 
\end{proof}

When $H=0$ and $\th=\pi/2$, $\Sigma$ is called a free boundary minimal surface. As $\cos\th=0$, the functional $E$ no longer depends on the wetting area and the boundaries are so-called free. 
 
\begin{corollary}\label{weakMIfbmsdup}
	Let $X: \Sigma\to \mathbb{R}^{n+1}$ be  an immersed  free boundary minimal hypersurface in the Euclidean unit ball $\mathbb{B}^{n+1}$. If it has type I+II Morse index less than $n+1$, then it must be totally geodesic.
\end{corollary}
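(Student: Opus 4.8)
The plan is to read this off Theorem~\ref{thm1.6}. For a free boundary minimal hypersurface one has $H=0$ and $\cos\theta=0$, and I claim that in this case the number $\ell$ of nonnegative eigenvalues of $\Upsilon$ is exactly $n+1$, and that the exceptional ``$|\circA|^2$-scale equivalent to a hyper-planar domain'' alternative never occurs unless $\Sigma$ is totally geodesic. Granting these two facts, part (3) of Theorem~\ref{thm1.6} forces the type-I+II Morse index to be $\ge n+1$ for every free boundary minimal hypersurface that is not totally geodesic; the corollary is then the contrapositive.

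First I would specialize the eigenvalue formula. Setting $H=0$ and $\cos\theta=0$ in \eqref{eq-lam} (equivalently \eqref{eq-lam1}), and using $|\circA|^2=|\A|^2$, one gets, in an orthonormal basis diagonalizing $\Upsilon$,
\[
\lambda_i=n^2\int_\Sigma |\A|^2 X_i^2\ \ge\ 0 \qquad (i=1,\dots,n+1).
\]
Thus all $n+1$ eigenvalues are nonnegative, so $\ell=n+1$. (If $\Sigma$ is not totally geodesic then $|\A|^2>0$ on a nonempty open set on which the coordinates $X_i$ cannot all vanish, so at least one $\lambda_i$ is strictly positive; but for the argument only the sign $\ge 0$ matters.)

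Second I would exclude the hyper-planar alternative. Suppose $\Sigma$ is $|\circA|^2$-scale equivalent to a hyper-planar domain, i.e.\ $|\A|^2\langle X,a\rangle\equiv c$ for some unit vector $a\in\mathbb{R}^{n+1}$ and constant $c$. If $c=0$, then $\langle X,a\rangle\equiv 0$ on the open set $\{|\A|^2>0\}$, which therefore lies in a fixed hyperplane and is totally geodesic, forcing $|\A|^2\equiv 0$ there --- a contradiction, so $\Sigma$ is totally geodesic. If $c\ne 0$, then $|\A|^2>0$ everywhere and $\langle X,a\rangle=c/|\A|^2$ has constant nonzero sign on $\Sigma$, in particular on $\partial\Sigma$; but along $\partial\Sigma$ one has $\eta=\bar N=X$ (from \eqref{mu0} with $\theta=\pi/2$ together with $X=\bar N$ on $\partial\mathbb{B}^{n+1}$), so Proposition~\ref{boundarysum} with $H=0$ gives $\int_{\partial\Sigma}\langle X,a\rangle\,ds=\int_{\partial\Sigma}\langle\eta,a\rangle\,ds=0$, contradicting the constant sign. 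Hence a non-totally-geodesic free boundary minimal hypersurface is never $|\circA|^2$-scale equivalent to a hyper-planar domain.

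Putting the pieces together: if $\Sigma$ is not totally geodesic, then $\ell=n+1$ and $\Sigma$ falls under case (3) of Theorem~\ref{thm1.6}, whence its type-I+II Morse index is $\ge n+1$; contrapositively, index $<n+1$ forces $\Sigma$ totally geodesic. The only step requiring input beyond Theorem~\ref{thm1.6} is the exclusion of the hyper-planar case, and even that reduces quickly to the balancing identity of Proposition~\ref{boundarysum} (equivalently, to the fact that for a free boundary minimal hypersurface every coordinate function is a Steklov eigenfunction with eigenvalue $1$), so I do not anticipate a genuine obstacle.
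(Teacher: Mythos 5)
Your proof is correct and follows essentially the same strategy as the paper's: specialize the eigenvalue formula \eqref{eq-lam} to $H=0$, $\cos\theta=0$ to obtain $\lambda_i=n^2\int_\Sigma|\A|^2 X_i^2\ge 0$ (hence $\ell=n+1$), rule out the $|\circA|^2$-scale hyper-planar alternative, and invoke case (3) of Theorem~\ref{thm1.6}. The one genuine point of divergence is in the exclusion of the hyper-planar case. For a free boundary minimal hypersurface one has $Z=nX$ and $c_1=0$, so $u_i=nX_i$; the paper then reads the \emph{interior} balancing $\int_\Sigma X_i=0$ off Lemma~\ref{satisfyingconstraints} and contradicts the fixed sign of $X_i=c/|\A|^2$. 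You instead use the \emph{boundary} balancing $\int_{\partial\Sigma}\eta\,ds=0$ from Proposition~\ref{boundarysum} with $H=0$, together with $\eta=\bar N=X$ along $\partial\Sigma$, to contradict the fixed sign of $\langle X,a\rangle$ there. These are companion identities — Lemma~\ref{satisfyingconstraints} already records both $\int_\Sigma u_i=0$ and $\int_{\partial\Sigma}u_i=0$ for these same $u_i$ — so the two arguments sit in the same circle of ideas and either one closes the gap. Your explicit treatment of the degenerate case $c=0$ (leading directly to total geodesy) is a useful clarification that the paper leaves implicit in the ``not umbilical'' clause of the definition.
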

\begin{proof}
	Assume for sake of contradiction that the immersion is not totally geodesic. First, we claim that $\Sigma$ is not $|\circA|^2$-scale to a hyper-planar domain. If it is, then for some constant unit vector $e_i$ and some fixed number $c>0$ 
	\[|\circA|^2\<X, e_i\>=c.\]
	That is $X_i$ is positive. On the other hand, by Lemma \ref{satisfyingconstraints}, 
	\[\int_\Sigma X_i=0,\]
	which is a contradiction.

	Then from the proof of Theorem \ref{thm1}, we know that $\phi_1, \cdots,\phi_{n+1}$ are linearly independent and dimension of span$\{\phi_1, \cdots,\phi_{n+1}\}$ is $n+1$ as linear functionals acting on $H_1$. 
	Since $\Sigma$ a minimal free boundary hypersurface, then $H=0$, $\cos\theta=0$. (\ref{eq-lam}) implies
	\[
	\lambda_i=\int_\Sigma n^2|\A|^2X_i^2\geq 0.
	\]
	If the type-I+II Morse index is less than $n+1$,  there exists at least one $i\in\{1,2,..., n+1\}$ such that $X_i\equiv 0$ on some open subset $U$. Hence $|\A|\equiv 0$ on $U$. This is a contradiction to the unique continuation of minimal surface and assumption that $\Sigma$ is not totally geodesic. The proof is complete. 
\end{proof}

\subsection{Critical Catenoid in a ball}
In this section, we'll determine precisely the indices with different constraints for the critical catenoid, the unique (up to isometry) rotationally symmetric free boundary minimal surface in $\mathbb{B}^3$. 
Here $\th=\frac{\pi}{2}$ and $\Omega=\mathbb{B}^3$. Then,

\begin{align*}
p &= |\A|^2,\\
q &= 1. 
\end{align*} 

The surface can be parametrized by a conformal harmonic map $X:\Sigma \mapsto \mathbb{B}^3$, for $t\in [-T, T]$ and $0\leq \tau\leq 2\pi$, 
\begin{equation}
\label{cricat}
X(t,\tau) =c(\cosh{t}\cos{\tau}, \cosh{t}\sin{\tau}, t).
\end{equation}
$T$ and $c$ are determined by
\begin{align*}
\cosh{T}&=T\sinh{T},\\
c &=\frac{1}{T\cosh{T}}.
\end{align*}
Following a straightforward calculation, we have 
\[|\A|^2=\frac{2}{c^2 \cosh^4{t}}.\]
The normal derivative along the boundary is given by
\[\nabla_\eta =\frac{1}{c\cosh{T}}\partial_{\pm t}=T \partial_{\pm t}.
\]  
The Jacobi operator is given by 
\[J=\Delta+p=\Delta+|\A|^2=\frac{1}{c^2\cosh^2(t)}(\partial_t^2+\partial_\tau^2+\frac{2}{\cosh^2(t)}).\]

Similarly, for type II index we prove the following. 
\begin{proposition}
	\label{type1cat}
	The type-I Morse index of the critical catenoid is 3.
\end{proposition}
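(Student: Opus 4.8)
The plan is to invoke Theorem~\ref{app1}, which reduces the problem to two things: computing the unconstrained index $\text{MI}(Q)$ and deciding whether the $Q$-critical alternative occurs. For the first, I would use the known fact (\cite{SZ16index, tran16index}) that the Morse index of the critical catenoid as a free boundary minimal surface in $\mathbb{B}^3$ is $4$; alternatively one recovers this from Theorem~\ref{indexdecompose} by separating variables $u=f(t)e^{ik\tau}$: the Dirichlet problem (\ref{FBP}) contributes $a=1$ (the $k=0$ solution $t\tanh t-1$ has eigenvalue $0$ because it vanishes exactly at $t=\pm T$, thanks to $\coth T=T$, and has no interior zero), while the Jacobi--Steklov problem (\ref{Jacobi-Steklov}) contributes $b=3$ (a simple eigenvalue below $1$ coming from $f=\tanh t$ in the mode $k=0$, and a double eigenvalue $-1$ coming from $f=1/\cosh t$ in the mode $k=1$, with all remaining Jacobi--Steklov eigenvalues at least $1$). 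Either way $\text{MI}(Q)=4$, so by Theorem~\ref{app1} it suffices to produce a smooth $u$ with $(\Delta+|\A|^2)u=-1$ on $\Sigma$, $\nabla_\eta u=u$ on $\partial\Sigma$, and $\int_\Sigma u\,d\mu\le 0$; the type-I index is then $\text{MI}(Q)-1=3$.

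Since the source term is rotationally invariant, I would look for a rotationally invariant $u=u(t)$. Writing $J=\frac{1}{c^2\cosh^2 t}\bigl(\partial_t^2+\partial_\tau^2+\tfrac{2}{\cosh^2 t}\bigr)$, the interior equation becomes the ODE $u''+\tfrac{2}{\cosh^2 t}\,u=-c^2\cosh^2 t$. The key point is that this ODE has an explicit particular solution $u_p(t)=-\tfrac{c^2}{4}\cosh^2 t$ (one uses $(\cosh^2 t)''=4\cosh^2 t-2$), and an explicit even homogeneous solution $g(t)=t\tanh t-1$, which is a constant multiple of the Jacobi field $\langle X,\nu\rangle$ and hence killed by $J$ since $H=0$. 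I therefore set
\[
u(t)=-\frac{c^2}{4}\cosh^2 t+B\,(t\tanh t-1),
\]
and choose $B$ so that the Robin condition $\nabla_\eta u=u$ holds; at $t=T$ this reads $Tu'(T)=u(T)$, and by evenness the condition at $t=-T$ is then automatic. Using $\coth T=T$ (equivalently $\tanh T=1/T$ and $\operatorname{sech}^2 T=1-\tanh^2 T$) one computes $g(T)=0$ and $g'(T)=T$, so $u(T)=-\tfrac{c^2}{4}\cosh^2 T$ and, since $T\cosh T\sinh T=\cosh^2 T$, the matching equation collapses to $BT^2=\tfrac{c^2}{4}\cosh^2 T$, i.e.\ $B=\tfrac{c^2\cosh^2 T}{4T^2}=\tfrac{1}{4T^4}>0$. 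This $u$ is smooth on $\Sigma$ and solves the required boundary value problem.

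It remains to verify the sign of $\int_\Sigma u$, and this is where the explicit form of $u$ makes things immediate: $B>0$, while $g(t)=t\tanh t-1$ is strictly increasing on $[0,T]$ from $g(0)=-1$ to $g(T)=0$, so $g<0$ on the interior of $\Sigma$ and $g\equiv 0$ on $\partial\Sigma$; since also $-\tfrac{c^2}{4}\cosh^2 t<0$, we get $u<0$ everywhere on $\Sigma$, hence $\int_\Sigma u\,d\mu<0$. Therefore the $Q$-critical alternative in Theorem~\ref{app1} holds and the type-I Morse index of the critical catenoid equals $\text{MI}(Q)-1=3$.

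The main obstacle is simply to hit on the right ansatz for $u$: once the particular solution $-\tfrac{c^2}{4}\cosh^2 t$ and the homogeneous solution $t\tanh t-1$ are in hand, both the boundary matching and the sign of $\int_\Sigma u$ fall out of the single relation $\coth T=T$. (The same scheme, with the interior source $-1$ replaced by a boundary source and $Ju=0$ in the interior, handles the type-II and type-I+II indices and yields the values recorded in Theorem~\ref{precise2}.)
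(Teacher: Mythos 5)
Your proposal is correct and follows essentially the same route as the paper: the same ansatz $u=-\tfrac{c^2}{4}\cosh^2 t + B(t\tanh t-1)$, the same boundary matching via the relation $\coth T=T$, the same sign argument for $\int_\Sigma u<0$, the same citation for $\text{MI}(Q)=4$, and the same reduction through Theorem~\ref{app1}. In fact your computed coefficient $B=\tfrac{c^2\cosh^2 T}{4T^2}=\tfrac{1}{4T^4}$ is the correct one; the paper's printed value $b=-a\cosh T\sinh T$ is off by a factor of $T$ from $b=-a\cosh^2 T/T^2$, though this slip is harmless since both give $b<0$ and hence $u<0$ pointwise, so the conclusion is unaffected.
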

\begin{proof}
	We consider 
	\[u= -a\cosh^2{t}+ b(1-t\tanh(t)).\]
	One then calculate
	\begin{align*}
	J u &=\frac{-4a}{c^2},\\
	D_\eta u (T)&=T(-a\sinh(2T)+b(-\tanh(T)-\frac{T}{\cosh^2(T)})) .
	\end{align*}
	Solving $D_\eta u(T)= u(T)$ gives
	\[b=-a\cosh(T)\sinh(T).\]
	Therefore, choosing $a=\frac{c^2}{4}$ and $b=-a\cosh(T)\sinh(T)$ yields
	\begin{align*}
	Ju &=-1 \text{ on } \Sigma\\
	\nabla_\eta u &=u \text{ on } \partial\Sigma,\\
	\int_\Sigma u &
	<0.  
	\end{align*}
	By Theorem \ref{app1}, its type-I Morse index is equal to $\text{MI}(Q)-1$. By recent results of \cite{tran16index, SZ16index, devyver16index}, $\text{MI}(Q)=4$ and the proof is complete.
\end{proof}
Now, we complete the second part of Corollary \ref{precise2}. 
\begin{proposition}
	\label{typ2cat}
	The type II Morse index of the critical catenoid is 3.
\end{proposition}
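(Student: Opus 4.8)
The plan is to apply the criterion of Theorem~\ref{app3dup}. For the critical catenoid $\th=\pi/2$, so $p=|\A|^2$ and $q=1$; thus the type-II Morse index equals $\text{MI}(Q)-1$ precisely when there is a smooth $u$ with $Ju=0$ on $\Sigma$, $\nabla_\eta u-u=1$ on $\partial\Sigma$, and $\int_\Sigma u\le 0$, and equals $\text{MI}(Q)$ otherwise. Since $\text{MI}(Q)=4$ for the critical catenoid (as recalled in the proof of Proposition~\ref{type1cat}), it is enough to exhibit one such $u$; this gives the value $4-1=3$.

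In contrast to the type-I situation, where one used the auxiliary function $\cosh^2 t$ solving $Ju=\text{const}$, here $u$ must be a genuine Jacobi field, so I would look among the rotationally symmetric ones. Writing $u=f(t)$, the equation $Ju=0$ reduces to $f''+\tfrac{2}{\cosh^2 t}f=0$, whose solution space is spanned by $f_1(t)=1-t\tanh t$ --- a constant multiple of the support function $\langle X,\nu\rangle$, which is Jacobi because $J\langle X,\nu\rangle=H=0$ by Corollary~\ref{lem3.1} --- and the odd solution $f_2(t)=\tanh t$ (a multiple of $\nu_3$). Since $f_2$ is odd it contributes with opposite signs to $\nabla_\eta u-u$ on the two boundary circles $t=\pm T$, so one is led to try $u=a(1-t\tanh t)$ and check it works.

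The computation hinges on the defining relation $\cosh T=T\sinh T$ of the critical catenoid, which is equivalent to $\tanh T=1/T$. It yields $f_1(T)=1-T\tanh T=0$ (equivalently, $\langle X,\nu\rangle$ vanishes on $\partial\Sigma$, since there $X=\bar N\perp\nu$) and $f_1'(T)=-\tanh T-T(1-\tanh^2 T)=-T$. Because $\nabla_\eta=T\partial_{\pm t}$ along $\partial\Sigma$, we get $\nabla_\eta u-u=T\,af_1'(T)-af_1(T)=-aT^2$ on both boundary circles, so the choice $a=-1/T^2$ realizes the boundary condition $\nabla_\eta u-u=1$. Finally, with area element $d\mu=c^2\cosh^2 t\,dt\,d\tau$ and $1-t\tanh t\ge 0$ on $[-T,T]$, vanishing only at the endpoints,
\[
\int_\Sigma u=-\frac{2\pi c^2}{T^2}\int_{-T}^{T}(1-t\tanh t)\cosh^2 t\,dt<0,
\]
so all three conditions of Theorem~\ref{app3dup} hold and the type-II Morse index equals $\text{MI}(Q)-1=3$.

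The main obstacle, compared with Proposition~\ref{type1cat}, is exactly that one cannot use a particular solution of $Ju=\text{const}$ and must instead identify the correct honest Jacobi field and control its boundary behaviour, so the special identity $\tanh T=1/T$ is doing all the work. The one delicate point is the sign of $\int_\Sigma u$: had it come out positive, Theorem~\ref{app3dup} would instead force the type-II index to equal $\text{MI}(Q)=4$, so one must genuinely verify both that $a=-1/T^2<0$ and that the integrand $(1-t\tanh t)\cosh^2 t$ is nonnegative. An analogous argument (using that $\lambda_i=\int_\Sigma n^2|\A|^2 X_i^2>0$ by unique continuation, together with type-I+II $\le$ type-II) then also pins down the type-I+II index of the critical catenoid as $3$, completing Theorem~\ref{precise2}.
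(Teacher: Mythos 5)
Your proposal is correct and takes essentially the same route as the paper: apply Theorem~\ref{app3dup} with the separated Jacobi field $u=a(1-t\tanh t)$, solve the Robin boundary equation $\nabla_\eta u-u=1$ for $a$, check $\int_\Sigma u<0$, and combine with the known value $\text{MI}(Q)=4$. In fact your computation $a=-1/T^2$ (equivalently $-\tanh^2 T$, using $\coth T=T$) is the correct value; the paper's stated $a=-\coth^2 T$ appears to be a sign/typo slip, since $Tf_1'(T)-f_1(T)=-(1+T^2\operatorname{sech}^2 T)=-\coth^2 T$, so the equation $a\cdot(-\coth^2 T)=1$ forces $a=-\tanh^2 T$. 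Either way $a<0$, $1-t\tanh t\ge 0$ on $[-T,T]$, and the conclusion $\int_\Sigma u<0$ is unaffected, so both arguments land on type-II index $=4-1=3$.
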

\begin{proof}
	We consider
	\[u=a(1-t\tanh(t)).\]
	One then calculate
	\begin{align*}
	J u &=0,\\
	D_\eta u &=-Ta(\tanh(T)+\frac{T}{\cosh^2(T)}).
	\end{align*}
	Solving $D_\eta u- u=1$ gives
	\[a=-\coth^2(T).\]
	For that choice of $a$
	\begin{align*}
	Ju &=0 \text{ on } \Sigma\\
	D_\eta u -u&=1 \text{ on } \partial\Sigma,\\
	\int_\Sigma u &
	<0.  
	\end{align*}
	By Theorem \ref{app3}, its type-II Morse index is equal to $\text{MI}(Q)-1$. By recent results of \cite{tran16index, SZ16index, devyver16index}, $\text{MI}(Q)=4$ and the proof is complete.
\end{proof}
\begin{theorem}
	\label{precise2dup}
	Let $\Sigma\subset \mathbb{B}^3$ is the critical catenoid. Then its type-I, type-II, and type I+II Morse indices are all equal to 3. 
\end{theorem}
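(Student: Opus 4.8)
The plan is to assemble the three Morse index values from the three propositions and theorems already in place, showing that each equals $3$. For the type-I index, Proposition \ref{type1cat} establishes it is $3$ by exhibiting an explicit $Q$-critical function $u = -a\cosh^2 t + b(1-t\tanh t)$ with $\int_\Sigma u < 0$ and invoking Theorem \ref{app1} together with the known value $\text{MI}(Q) = 4$ from \cite{tran16index, SZ16index, devyver16index}. For the type-II index, Proposition \ref{typ2cat} does the analogous thing with $u = a(1-t\tanh t)$ via Theorem \ref{app3}, again yielding $\text{MI}(Q) - 1 = 3$. So the only genuinely new content in Theorem \ref{precise2dup} is the type-I+II index, and the bulk of the proof should be devoted to that.

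For the type-I+II index, I would proceed as follows. First, recall from the type-I+II definition that this index is $\text{MI}^{\{\chi,\psi\}}(Q)$, where $\chi$ is $L^2(\Sigma)$-multiplication by $1$ and $\psi$ is $L^2(\partial\Sigma)$-multiplication by $1$. The type-I+II index is at most the type-I index (and at most the type-II index), so it is $\le 3$; this gives the upper bound for free. For the lower bound, the natural tool is Theorem \ref{severalfunc} (or its geometric repackaging Theorem \ref{thmindex}): I would apply Theorem \ref{thm1.6} to the critical catenoid, computing the matrix $\Upsilon$ explicitly. Since the catenoid is a free boundary minimal surface ($H = 0$, $\cos\theta = 0$), equation (\ref{eq-lam}) collapses to $\lambda_i = \int_\Sigma n^2 |\A|^2 X_i^2 = \int_\Sigma 4|\A|^2 X_i^2$, which is strictly positive for each of the three coordinate directions by unique continuation (the catenoid is not totally geodesic, so $|\A|^2$ and $X_i$ cannot both vanish on an open set). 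Hence all three eigenvalues $\lambda_1, \lambda_2, \lambda_3$ of $\Upsilon$ are positive, so $\ell = 3$. Since the catenoid is not $|\circA|^2$-scale equivalent to a hyper-planar domain (by the argument in the proof of Corollary \ref{weakMIfbmsdup}: $|\circA|^2 X_i = c > 0$ would contradict $\int_\Sigma X_i = 0$ from Lemma \ref{satisfyingconstraints}), part (3) of Theorem \ref{thm1.6} applies and gives type-I+II index $\ge \ell = 3$. Combined with the upper bound, the type-I+II index equals $3$.

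The main obstacle I anticipate is making sure the normalization constants and the precise form of $\Upsilon$ are handled correctly for the catenoid, and in particular verifying strict positivity of each $\lambda_i$ rather than just nonnegativity — this is what rules out the eigenvalue being zero and forces $\ell = 3$ rather than something smaller. The relevant point is that $|\A|^2 = 2/(c^2\cosh^4 t) > 0$ everywhere, and for each $i$ the coordinate function $X_i$ is not identically zero (indeed $X_1, X_2$ involve $\cosh t \cos\tau, \cosh t\sin\tau$ and $X_3 \propto t$), so each integrand $|\A|^2 X_i^2$ is nonnegative and positive on a set of full measure, whence $\lambda_i > 0$. A secondary point to state carefully is that the three coordinate functionals $\bar\phi_i$ are linearly independent on the constraint space $H_1$, which again follows from the non-totally-geodesic hypothesis exactly as in the proof of Theorem \ref{thm1}. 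Once these are in place, the theorem follows by combining the three bounds.

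\begin{proof}[Proof of Theorem \ref{precise2dup}]
	The type-I index equals $3$ by Proposition \ref{type1cat} and the type-II index equals $3$ by Proposition \ref{typ2cat}. It remains to show the type-I+II index equals $3$.

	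Since every function satisfying the type-I+II constraint satisfies the type-I constraint, the type-I+II index is at most the type-I index, hence at most $3$.

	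For the reverse inequality we apply Theorem \ref{thm1.6}. The critical catenoid is a free boundary minimal hypersurface in $\mathbb{B}^3$, so $H=0$ and $\cos\theta=0$, and it is not totally geodesic. By the argument in the proof of Corollary \ref{weakMIfbmsdup}, it is not $|\circA|^2$-scale equivalent to a hyper-planar domain: otherwise $|\circA|^2 X_i = c>0$ for some coordinate $X_i$ and constant $c$, contradicting $\int_\Sigma X_i\, d\mu = 0$ from Lemma \ref{satisfyingconstraints}. Diagonalizing $\Upsilon$ as in (\ref{eq-lam}), and using $H=0$, $\cos\theta = 0$, we get
	\[
	\lambda_i = \int_\Sigma n^2 |\A|^2 X_i^2 = \int_\Sigma 4|\A|^2 X_i^2.
	\]
	Since $|\A|^2 = 2/(c^2\cosh^4 t) > 0$ everywhere and each coordinate function $X_i$ is not identically zero, each integrand $|\A|^2 X_i^2$ is nonnegative and strictly positive off a set of measure zero; hence $\lambda_i > 0$ for $i=1,2,3$. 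Therefore $\ell = 3$, and part (3) of Theorem \ref{thm1.6} gives that the type-I+II Morse index is at least $\ell = 3$.

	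Combining the two bounds, the type-I+II Morse index equals $3$.
\end{proof}
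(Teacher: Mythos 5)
Your proof is correct and takes essentially the same route as the paper's: both combine Propositions \ref{type1cat} and \ref{typ2cat} for the type-I and type-II values, use the constraint-space inclusion for the upper bound $\text{type-I+II}\le 3$, and get the lower bound $\text{type-I+II}\ge 3$ from Theorem \ref{thm1.6}. The only difference is cosmetic — the paper cites Corollary \ref{weakMIfbmsdup} (with $n=2$) for the lower bound, and since that corollary is proved precisely by the application of Theorem \ref{thm1.6} you carry out, you have merely unwound the citation.
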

\begin{proof}
	By Corollary \ref{weakMIfbmsdup}, the type I+II index is at least $3$. Since it is smaller than or equal to either the type I or type II one, by Propositions \ref{type1cat} and \ref{typ2cat}, the result follows.  
\end{proof}
\bibliographystyle{plain}
\bibliography{bioMorse}

\end{document}